\documentclass[11pt]{amsart}

\usepackage{latexsym,amsmath,amssymb,amsfonts,MnSymbol,epsfig,verbatim}
\usepackage[initials]{amsrefs}
\usepackage[all]{xy}
\usepackage{tikz}
\usetikzlibrary{decorations.pathmorphing}

%%
%% general commands
%%

% number sets
\providecommand{\CC}{{\mathbb{C}}}
\providecommand{\RR}{{\mathbb{R}}}
\providecommand{\QQ}{{\mathbb{Q}}}

\providecommand{\ZZ}{{\mathbb{Z}}}

% caligraphy etc.

\providecommand{\MM}{{\mathcal M}}

\providecommand{\SO}{{\mathrm{SO}(n)}}

\providecommand{\Spinc}{{\mathrm{Spin}^c(n)}}

\providecommand{\Ind}{{\mathrm{Index}}}

\providecommand{\Null}{{\mathcal{N}}}
\providecommand{\Hardy}{{\Null(\bar{D}_\Omega)}}

\providecommand{\ind}{{\mathrm{Index}\,}}
\providecommand{\ch}{{\mathrm{ch}}}
\providecommand{\Td}{{\mathrm{Td}}}

% angles
 % <...>

% special symbols, local to this file
%%\providecommand{\Kp}{{\mathrm{K}_0(\cdot)}}   %% Paul's notation
   %% group K_0(point)

\newtheorem{theorem}{Theorem}
\newtheorem{lemma}[theorem]{Lemma}
\newtheorem{corollary}[theorem]{Corollary}
\newtheorem{proposition}[theorem]{Proposition}

\theoremstyle{definition}

\theoremstyle{remark} 
\newtheorem{remark}[theorem]{Remark}

\numberwithin{equation}{section}

\setcounter{tocdepth}{1}

\begin{document}

%\title{Fredholm Operators and $K$-homology: hypoelliptic operators}
\title[Index for Toeplitz Operators as Corollary of Bott Periodicity]{The Index Theorem for Toeplitz Operators as a Corollary of Bott Periodicity}
\author{Paul F.\ Baum}
\address{The Pennsylvania State University, University Park, PA, 16802, USA}
\email{baum@math.psu.edu}
\author{Erik van Erp}
\address{Dartmouth College, 6188, Kemeny Hall, Hanover, New Hampshire, 03755, USA}\email{jhamvanerp@gmail.com}

%\thanks{PFB was partially supported by NSF grant DMS-0701184}
%\thanks{EvE was partially supported by NSF grant DMS-1100570}

\dedicatory{
Dedicated to the memory of Sir Michael Atiyah.}

\maketitle

\tableofcontents

\section{Introduction}

This is an expository paper about the index of Toeplitz operators,
and in particular  Boutet de Monvel's theorem \cite{Bo79}.
We prove Boutet de Monvel's theorem as a corollary of Bott periodicity,
and independently of the Atiyah-Singer index theorem.

Let $M$ be an odd dimensional closed Spin$^c$ manifold with Dirac operator $D$ 
acting on sections of the spinor bundle $S$.
If $E$ is a smooth $\CC$ vector bundle on $M$,
$D^E$ denotes $D$ twisted by $E$.
The  closure $\bar{D}^E$ of $D^E$ is an unbounded self-adjoint operator on
the Hilbert space $L^2(M,S\otimes E)$ of $L^2$-sections of $S\otimes E$.
$\bar{D}^E$  has discrete spectrum with finite dimensional eigenspaces.
Denote by $L^2_+(M,S\otimes E)$ the Hilbert space direct sum of the eigenspaces of $\bar{D}_E$ for eigenvalues $\lambda\ge 0$.
$P^E_+$ denotes the orthogonal projection 
\[ P^E_+:L^2(M,S\otimes E)\to L^2_+(M,S\otimes E)\]
Suppose that $\alpha$ is an automorphism of $E$,
and $I_S\otimes \alpha$ the resulting automorphism of $S\otimes E$.
$\mathcal{M}_\alpha$ is the bounded invertible operator on $L^2(S\otimes E)$
obtained from $I_S\otimes \alpha$.
The Toeplitz operator $T_\alpha$ is the composition of $\mathcal{M}_\alpha:L^2_+\to L^2$
with $P^E_+:L^2\to L^2_+$,
\[ T_\alpha =P^E_+\mathcal{M}_\alpha : L^2_+(M,S\otimes E)\to L^2_+(M,S\otimes E)\]
%Due to compactness of the commutator $[\mathcal{M}_\alpha,P]$ 
The Toeplitz operator $T_\alpha$ is a Fredholm operator (see section \ref{sec2}).
%The special case of the Atiyah-Singer theorem proved in this paper is:

\begin{theorem}\label{Thm}
Let $M$ be an odd dimensional compact Spin$^c$ manifold without boundary.
If $E$ is a smooth $\CC$ vector bundle on $M$,
and $\alpha$ is an automorphsim of $E$,
then
\[  \ind T_\alpha = (\ch(E,\alpha)\cup \Td(M))[M]\]
Here $\ch(E,\alpha)$ is the Chern character of $(E,\alpha)$, $\Td(M)$ is the Todd class of the Spin$^c$ vector bundle $TM$
and $[M]$ is the fundamental cycle of $M$.
\end{theorem}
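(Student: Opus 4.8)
The plan is to realize both sides of the asserted identity as group homomorphisms $K^1(M)\to\ZZ$ that are natural under Spin$^c$ embeddings, and then to reduce the whole statement --- by embedding $M$ into a sphere, and that sphere's $K^1$ into the circle's --- to the classical one-dimensional Toeplitz index theorem, which is the instance of Bott periodicity at stake here.

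First I would verify that $\ind T_\alpha$ depends only on the class $[E,\alpha]\in K^1(M)$. Since $\mathcal{M}_\alpha$, hence $T_\alpha=P^E_+\mathcal{M}_\alpha$, depends continuously on $\alpha$ in operator norm, $\ind T_\alpha$ is invariant under homotopies of $\alpha$; moreover $T_{I_E}$ is the identity operator, so has index $0$, and $T_{\alpha\oplus\beta}=T_\alpha\oplus T_\beta$ under the splitting $L^2_+(M,S\otimes(E\oplus F))=L^2_+(M,S\otimes E)\oplus L^2_+(M,S\otimes F)$. Hence $[E,\alpha]\mapsto\ind T_\alpha$ descends to a homomorphism $\mathrm{ind}_a\colon K^1(M)\to\ZZ$, and $[E,\alpha]\mapsto(\ch(E,\alpha)\cup\Td(M))[M]$ is likewise a homomorphism $\mathrm{ind}_t\colon K^1(M)\to\ZZ$; the theorem asserts $\mathrm{ind}_a=\mathrm{ind}_t$.

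The core of the plan is a naturality statement. For an embedding $\iota\colon M\hookrightarrow M'$ of closed odd-dimensional Spin$^c$ manifolds, the normal bundle $\nu$ has even rank and inherits a Spin$^c$ structure (from $TM$ and $\iota^*TM'\cong TM\oplus\nu$), so there is a wrong-way homomorphism $\iota_!\colon K^1(M)\to K^1(M')$: the Thom isomorphism for $\nu$ followed by collapse onto a tubular neighbourhood. I would prove, for all $x\in K^1(M)$,
\[
\mathrm{ind}_a\bigl(M',\iota_!x\bigr)=\mathrm{ind}_a(M,x),
\qquad
\mathrm{ind}_t\bigl(M',\iota_!x\bigr)=\mathrm{ind}_t(M,x).
\]
The second is the Riemann--Roch identity for $\iota$: unwinding $\iota_!$ through the Thom class and using multiplicativity of $\ch$, the projection formula, and $\iota^*TM'\cong TM\oplus\nu$, one gets $(\ch(\iota_!x)\cup\Td(M'))[M']=(\ch(x)\cup\Td(M))[M]$ --- a routine computation I would only sketch. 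The first is the geometric heart of the argument and the place where Bott periodicity enters essentially: restricting the Dirac operator of $M'$ to a neighbourhood of $\iota(M)$ and combining it with the fibrewise Spin$^c$ Dirac operator along $\nu$ shows that the Dirac $K$-homology class of $M'$ is the $\iota$-wrong-way image of that of $M$, and the assertion that this wrong-way map is the Thom/Bott isomorphism \emph{is} Bott periodicity for $\nu$. By excision I would reduce the analytic claim to the model case where $M'$ is a tubular neighbourhood diffeomorphic to the total space of $\nu$, where it becomes exactly the Bott periodicity isomorphism. Establishing this invariance of $\ind T_\alpha$ under Spin$^c$ embeddings is the step I expect to be the main obstacle.

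Granting that, the rest is bookkeeping. Choose a Spin$^c$ embedding $\iota\colon M\hookrightarrow S^{2N+1}$ with $N$ large (it exists by Whitney's theorem, and its normal bundle is Spin$^c$ as above); by the naturality above, the asserted identity on $M$ follows from $\mathrm{ind}_a=\mathrm{ind}_t$ on $K^1(S^{2N+1})\cong\ZZ$. Choosing further a Spin$^c$-embedded circle $S^1\hookrightarrow S^{2N+1}$, the induced map $\iota_!\colon K^1(S^1)\to K^1(S^{2N+1})$ is an isomorphism (the complement of a tubular neighbourhood of $S^1$ deformation retracts onto $S^{2N-1}$, whose reduced $K^0$ vanishes, so the collapse map is an isomorphism on $\widetilde K^1$, and the Thom isomorphism for $\nu$ is one as well), so naturality once more reduces everything to the asserted identity on $K^1(S^1)\cong\ZZ$. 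There $\Td(S^1)=1$, so $\mathrm{ind}_t[E,\alpha]=\ch(E,\alpha)[S^1]$ is the winding number of $\det\alpha$, while $\mathrm{ind}_a[E,\alpha]=\ind T_\alpha$ on the circle is the classical Toeplitz index of Gohberg--Krein, equal --- with the sign conventions fixed for $P^E_+$ and for $\ch$ --- to the same winding number; this is the one-dimensional case of Bott periodicity, and the theorem follows. (One may also skip the last reduction and evaluate $\mathrm{ind}_a$ directly on $S^{2N+1}$, where $\Td=1$, $\ch$ sends the generator of $K^1(S^{2N+1})$ to a generator of $H^{2N+1}$, and the Toeplitz operator of the Bott generator has index $1$ by Bott periodicity.)
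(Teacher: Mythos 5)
Your plan is conceptually parallel to the paper's: embed $M$ in an odd sphere, push the problem to the sphere, and reduce to the circle, where the classical Toeplitz index computation closes the loop. But the two decisive differences are where all the work lives, and your proposal leaves that work undone.

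First, your reduction is organized around $K^1$, Thom isomorphisms and wrong-way maps $\iota_!$. The paper deliberately avoids this machinery (it states that the proof ``does not use $K$-theory or $K$-homology, or cobordism theory''), and instead splits the invariance you need into two more elementary and separately provable analytic facts: \emph{bordism invariance} (proved via the Calderon projection, the trace map, and the polar decomposition of $\bar D_\Omega$ on a compact Spin$^c$ manifold with boundary) and \emph{invariance under vector bundle modification} (proved via the product lemma, which rests on the computation that the twisted Dirac operator $D_\beta$ on an even sphere has one-dimensional kernel, zero cokernel, and trivial $SO(n+1)$-action on its kernel). These two pieces together replace your single ``invariance under Spin$^c$ embeddings,'' and they are established by concrete operator-theoretic arguments, not by appeal to a Thom isomorphism. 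Bott periodicity is then invoked only once, in its original homotopy-theoretic form $\pi_m\mathrm{GL}(r,\CC)\cong\ZZ$, to reduce a sphere to the circle.

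Second, and more seriously, the step you yourself flag as ``the main obstacle'' --- showing that $\mathrm{ind}_a(M',\iota_! x)=\mathrm{ind}_a(M,x)$ --- is not a consequence of Bott periodicity; it is an analytic theorem in its own right, and you do not prove it. Saying that ``this wrong-way map is the Thom/Bott isomorphism \emph{is} Bott periodicity'' conflates the topological construction of $\iota_!$ (which indeed uses Bott) with the analytic claim that $\ind T_\alpha$ is preserved under $\iota_!$. The latter requires actual estimates: you must control the Toeplitz projection for the Dirac operator on $M'$ against the fibrewise and tangential pieces near $\iota(M)$, handle the part of $L^2$ that has nothing to do with the tubular neighbourhood, and show the relevant contributions are compact. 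Excision for $K$-homology would do this, but the paper's stated goal is precisely to avoid invoking that framework; if you allow it, you might as well invoke the Baum--Douglas picture and there is little left to prove. The paper's bordism argument (via the identity $\ind S_{\tilde\alpha}=\ind\mathcal M^-_{\tilde\alpha}=0$ using the partial isometry $V$) and product lemma (via the anticommuting unitary $U$ built from the polar decomposition of $D_\beta$) are exactly the analytic substance your proposal gestures at but does not supply. Until you give a genuine proof of the analytic wrong-way invariance --- or break it, as the paper does, into bordism and vector bundle modification --- the argument is incomplete.
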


Our proof of Theorem \ref{Thm} is based on three points:
\begin{itemize}
\item Bott periodicity.
\item Bordism invariance of the index.
\item Invariance of the index under vector bundle modification.
\end{itemize}
The last two points are analytical, and are proved in this paper.
The key topological feature of our proof is Bott periodicity (in its original form).
Our proof does not use $K$-theory or $K$-homology, or cobordism theory, and is independent of the Atiyah-Singer theorem.

In section \ref{sec:BdM} we show  that Theorem \ref{Thm}  implies Boutet de Monvel's theorem.

Special cases of Theorem \ref{Thm}, when $M=S^1$,  were proven by F.\ Noether \cite{No20}, and Gohberg-Krein \cite{GK60}.
Venugopalkrishna \cite{V72} proved the case of Boutet de Monvel's theorem when  $M=S^{2r+1}$.

\subsection*{Acknowledgement}
We thank Magnus Goffeng for carefully reading a first draft of this paper, and pointing out a mistake in one of the proofs.

\section{Todd class and  Chern character}

In this section we  review the characteristic classes that appear  in Theorem \ref{Thm}.
We assume familiarity with Chern and Pontryagin classes (see \cite{MilSta}).

The Todd class of  a  $\CC$ vector bundle is  the characteristic class
\[ \mathrm{Td} = \prod_j \frac{x_j}{1-e^{-x_j}}\]
where $x_j$ are the Chern roots.
The $\hat{A}$ class of an  $\RR$ vector bundle is the characteristic class 
\[ \hat{A} = \prod_j \frac{x_j/2}{\sinh{(x_j/2)}}\]
where  $x_j$ are  the Pontryagin roots.
Due to the power series identity
\[ e^{x/2}\cdot \frac{x/2}{\sinh{(x/2)}} = \frac{x}{1-e^{-x}}\]
for a $\CC$ vector bundle $F$,
\[ \mathrm{Td}(F) = e^{c_1(F)/2} \; \hat{A}(F)\]
where $\hat{A}(F)$ is the $\hat{A}$ class of the underlying $\RR$ vector bundle of $F$.

A  spin$^c$ vector bundle is an $\RR$ vector bundle with given extra structure.
Thus (as an $\RR$ vector bundle) it has an $\hat{A}$ class.
Associated to each spin$^c$ vector bundle $F$ is a $\CC$ line bundle $L_F$.
If $F$ is a $\CC$ vector bundle, $L_F$  is the determinant bundle.
The first Chern class $c_1(F)$ of a spin$^c$ vector bundle
is, by definition, the  Chern class of $L_F$.
The Todd class of a spin$^c$ vector bundle $F$ is defined by the formula,
 \[ \mathrm{Td}(F) = e^{c_1(F)/2} \; \hat{A}(F)\]
 For a spin vector bundle $F$ the associated line bundle is trivial, and so 
 \[ \mathrm{Td}(F) = \hat{A}(F)\]
The Chern character $\mathrm{ch}(E,\alpha)$ of a smooth $\CC$ vector bundle $E$ with smooth automorphism $\alpha$
is as follows.
First assume that $E$ is the trivial bundle $X\times \CC^r$,
and $\alpha$ is a smooth map $\alpha:X\to GL(r, \CC)$.
Then if the dimension of $X$ is $2m+1$, the Chern character is the cohomology class represented by the differential form 
\[ \mathrm{ch}(\alpha) 
= \sum_{j=0}^m  -\frac{j!}{(2j+1)!(2\pi i)^{j+1}} \mathrm{Tr}((\alpha^{-1}d\alpha)^{2j+1})\]
More generally, if $E$ is not trivial, choose a vector bundle $E'$ such that $E\oplus E'$ is trivialized,
and extend $\alpha$ by adding the identity automorphism on $E'$.
Then proceed as above.

\section{Toeplitz operators on the circle}

The simplest case of Theorem \ref{Thm} is:

\begin{theorem}\label{thm:Noether}
For a continuous function $f:S^1\to \CC\setminus \{0\}$
the Fredholm operator $T_f$ has index
\[ \Ind T_f = -\mathrm{winding\; number}\, f\]
\end{theorem}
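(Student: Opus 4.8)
The plan is to reduce to the case of the monomials $f(z) = z^n$ and then argue by homotopy invariance of the Fredholm index. First I would observe that on $S^1$ the relevant Hilbert space $L^2_+$ is the classical Hardy space $H^2 \subset L^2(S^1)$, spanned by the orthonormal basis $\{z^k : k \ge 0\}$ (this matches the general setup once one identifies $D = -i\,d/d\theta$ on functions on $S^1 = \mathrm{Spin}^c(1)$-manifold, whose nonnegative spectrum gives exactly the nonnegative Fourier modes; the single eigenvalue $0$ contributes the constant function, which by convention sits in $L^2_+$). For $f = z^n$ with $n \ge 0$, the Toeplitz operator $T_{z^n}$ is the unilateral shift to the $n$-th power: it sends $z^k \mapsto z^{k+n}$, hence is injective with cokernel spanned by $\{1, z, \dots, z^{n-1}\}$, so $\ind T_{z^n} = -n$. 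For $f = z^{-n}$ with $n > 0$ one gets the adjoint (backward shift to the $n$-th power), which is surjective with $n$-dimensional kernel, so $\ind T_{z^{-n}} = n$. In both cases $\ind T_{z^n} = -n = -\,\mathrm{winding\ number}(z^n)$, establishing the theorem for monomials.

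Next I would pass from a general nonvanishing continuous $f$ to a monomial by a homotopy. Since $f : S^1 \to \CC \setminus \{0\}$, its winding number $n$ is a well-defined integer, and $f$ is homotopic through nonvanishing continuous functions to $z \mapsto z^n$ (the winding number is a complete homotopy invariant of $[S^1, \CC\setminus\{0\}] = \pi_1(\CC\setminus\{0\}) = \ZZ$; concretely, write $f = |f|\cdot e^{i\varphi}$ with $\varphi$ a continuous lift of the argument plus the linear term $n\theta$, and scale $|f|$ to $1$ and the nonlinear part of $\varphi$ to $0$). The assignment $g \mapsto T_g = P_+ \mathcal{M}_g$ is norm-continuous in $g$ with respect to the sup-norm on $C(S^1)$, because $\|\mathcal{M}_g\| = \|g\|_\infty$ and $P_+$ is a fixed bounded operator; hence along the homotopy $\{f_t\}$ from $f$ to $z^n$ we get a norm-continuous path of Fredholm operators $\{T_{f_t}\}$ (Fredholmness along the path follows since each $f_t$ is nonvanishing, using the fact recalled in section \ref{sec2} that $T_g$ is Fredholm whenever $g$ is invertible — here a symbol calculus / Atkinson argument shows $T_g T_{g^{-1}} - I$ and $T_{g^{-1}} T_g - I$ are compact). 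By stability of the index under norm-continuous homotopies of Fredholm operators, $\ind T_f = \ind T_{z^n} = -n = -\,\mathrm{winding\ number}(f)$.

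The main obstacle is the continuity-plus-Fredholmness of the path $t \mapsto T_{f_t}$: one must know that $T_g$ is Fredholm for \emph{every} invertible symbol $g$ along the homotopy, not merely at the endpoints, and that the index does not jump. The cleanest way to handle this is to invoke the compactness of the semicommutator $[P_+, \mathcal{M}_g]$ — equivalently, that $T_g T_h - T_{gh}$ is compact for continuous $g, h$ — which on $S^1$ follows because the Hankel operator $H_g = (I - P_+)\mathcal{M}_g P_+$ is compact for continuous $g$ (it is compact for trigonometric polynomials, where it has finite rank, and then by density in sup-norm). Granting this, $T_g$ has $T_{g^{-1}}$ as an inverse modulo compacts, so $T_g$ is Fredholm; and the index is locally constant on the (open) set of Fredholm operators, so it is constant along the connected path. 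I would remark that this is precisely the one-dimensional shadow of the general mechanism — Bott periodicity plus homotopy invariance — that drives the proof of Theorem \ref{Thm}, so no extra machinery beyond elementary Hardy-space analysis is needed here.
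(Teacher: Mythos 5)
Your proposal is correct and follows essentially the same route as the paper: compute the index of $T_{z^m}$ directly from the shift structure on the Hardy space basis $\{z^k : k\ge 0\}$, then homotope a general nonvanishing $f$ to $z^m$ (with $m$ the winding number) and invoke norm-continuity of $g\mapsto T_g$ together with stability of the Fredholm index. The extra care you take to justify Fredholmness along the whole path via compactness of the semicommutator $T_gT_h - T_{gh}$ is exactly what the paper delegates to section \ref{sec2}, so your write-up is the same argument with one implicit step made explicit.
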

\begin{proof}
First consider  $f(z)=z^m$ with $m\ge 0$.
Using the orthonormal basis $e_n(z)=z^n$ (with $n\ge 0$) of $L^2_+(S^1)$,
we have $T_{z^m}e_n=e_{n+m}$. Thus.
\[ \dim\mathrm{Ker}\, T_{z^m} = 0\qquad \dim\mathrm{Coker}\, T_{z^m} =m\]
If  $m<0$ we get  $T_{z^m}e_n=0$ for $n=0,\dots, |m|-1$ and $T_{z^m}e_n=e_{n+m}$ otherwise. Then,
\[ \dim\mathrm{Ker}\, T_{z^m} =|m|= -m\qquad \dim\mathrm{Coker}\, T_{z^m} =0\]
In both cases we find $\Ind T_f = -m$.

Now let $f:S^1\to \CC\setminus \{0\}$ be an arbitrary continuous function with winding number $m$.
Then $f$ is homotopic to $z^m$
by a homotopy  $f_t:S^1\to \CC\setminus \{0\}$, $t\in [0,1]$.
Since the map $f\mapsto T_f$, $C(S^1)\to \mathcal{B}(L^2_+)$ is continous,
it follows that $T_{f_t}$ is a norm continuous path of Fredholm operators.
Therefore
\[\Ind\, T_{f}=\Ind\, T_{z^m}=-m=-\mathrm{winding\;number}\, f\]
\end{proof}

When $f$ is smooth, 
\[ -\mathrm{winding\; number}\, f=-\frac{1}{2\pi i}\int_{S^1}f^{-1}df=\mathrm{ch}(f)[S^1]\]

\section{Toeplitz operators on closed manifolds}\label{sec2}

With  notation as in the introduction, in this section we prove that the Toeplitz operator $T_\alpha$ is a Fredholm operator.
%Throughout this section $M$ is an odd dimensional compact Spin$^c$ manifold without boundary.
%$D$ denotes the Dirac operator of $M$,
First  assume that $E=M\times \CC$ is a trivial line bundle.
The Dirac operator $D$ of $M$ 
acts on $C^\infty$-sections of the spinor bundle $S$,
\[ D: C^\infty(M,S)\to C^\infty(M,S)\]
%The closure $\bar{D}$ of $D$ is an unbounded self-adjoint operator on
%the Hilbert space $L^2(S)$ of $L^2$-sections of $S$.
%$\bar{D}$  has discrete spectrum with finite dimensional eigenspaces.
%The Hilbert space $L^2(S)$ has an orthonormal basis consisting of smooth sections of $S$ that are eigenvectors for $\bar{D}$.
%Denote by $L^2_+(S)$ the Hilbert space direct sum of the eigenspaces of $\bar{D}$ for eigenvalues $\lambda\ge 0$.
As above, denote by $L^2_+(M,S)$ the Hilbert space direct sum of the eigenspaces of $\bar{D}$ for eigenvalues $\lambda\ge 0$.
$P_+$ denotes the orthogonal projection 
\[ P_+:L^2(M,S)\to L^2_+(M,S)\subset L^2(M,S)\]
%$P_+$  is a pseudodifferential operator of order zero,
%viewed as a bounded operator  $L^2\to L^2$.
For a continuous function $f\in C(M)$,  $\mathcal{M}_f$ denotes the multiplication operator on $L^2(S)$,
\[ (\mathcal{M}_fu)(x) = f(x)u(x) \qquad u\in L^2(S)\] 

\begin{lemma}
The commutator $[P_+,\mathcal{M}_f]=P_+\mathcal{M}_f-\mathcal{M}_fP_+$ is compact for every continuous function $f$ on $M$.
\end{lemma}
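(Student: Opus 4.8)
The plan is to reduce the claim to the case of smooth $f$, then use pseudodifferential operator calculus. First I would note that both $f \mapsto \mathcal{M}_f$ and the assignment $f \mapsto [P_+, \mathcal{M}_f]$ are norm-continuous from $C(M)$ into $\mathcal{B}(L^2(M,S))$, since $\|\mathcal{M}_f\| = \|f\|_\infty$. Because the compact operators form a closed two-sided ideal in $\mathcal{B}(L^2(M,S))$, and $C^\infty(M)$ is dense in $C(M)$ in the sup norm, it suffices to prove that $[P_+, \mathcal{M}_f]$ is compact for every smooth $f$.

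For smooth $f$ the argument is a symbol computation. The projection $P_+$ onto the nonnegative spectral subspace of the self-adjoint elliptic first-order operator $\bar D$ is a classical pseudodifferential operator of order $0$ on $M$: indeed $P_+ = \tfrac12(I + F) + R$ where $F = \bar D(I+\bar D^2)^{-1/2}$ has principal symbol $\sigma(\xi)/|\sigma(\xi)|$ (with $\sigma$ the principal symbol of $D$), the operator $\bar D(I + \bar D^2)^{-1/2}$ is a $\Psi$DO of order $0$ by standard functional-calculus/heat-kernel arguments, and the correction coming from the finite-rank discrepancy between "$\ge 0$" and "$> 0$" eigenspaces is smoothing. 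The multiplication operator $\mathcal{M}_f$ is a $\Psi$DO of order $0$ with scalar principal symbol $f(x)\cdot I$. Since the principal symbol of $P_+$ commutes with the scalar symbol of $\mathcal{M}_f$, the commutator $[P_+, \mathcal{M}_f]$ is a pseudodifferential operator of order $-1$, hence compact on $L^2(M,S)$ because $M$ is compact (a negative-order $\Psi$DO on a closed manifold is compact, factoring through the compact Sobolev embedding $H^1 \hookrightarrow L^2$).

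The main obstacle is the claim that $P_+$ is a genuine order-zero classical pseudodifferential operator — this is the one nontrivial input. I would either cite the standard reference (Seeley, or the treatment of the Atiyah–Patodi–Singer projection) or sketch it: write $P_+ = \chi_{[0,\infty)}(\bar D)$, approximate $\chi_{[0,\infty)}$ by smooth functions and use that $g(\bar D)$ is a $\Psi$DO of order $0$ for $g$ a symbol of order $0$, via the resolvent $(\bar D - \lambda)^{-1}$ being a $\Psi$DO of order $-1$ together with a contour-integral representation. An alternative that avoids pseudodifferential machinery entirely: prove directly that $[\bar D, \mathcal{M}_f]$ is bounded (it is $\mathcal{M}_{c(df)}$, Clifford multiplication by $df$, since $D$ is a first-order differential operator) and that $\mathcal{M}_f$ maps the domain of $\bar D$ into itself; then an elementary spectral-theoretic estimate — using that the spectral gaps of $\bar D$ near $0$ are discrete and that $(I + \bar D^2)^{-1/2}$ is compact — shows $[P_+, \mathcal{M}_f]$ is a norm limit of finite-rank operators. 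I would present the $\Psi$DO argument as the main line, since it is the cleanest and generalizes verbatim to the twisted case $S \otimes E$ needed later.
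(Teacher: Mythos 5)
Your proposal is correct and follows essentially the same route as the paper: reduce to smooth $f$ by density of $C^\infty(M)$ in $C(M)$ and the norm bound $\|[P_+,\mathcal{M}_f]\|\le 2\|f\|_\infty$, then observe that for smooth $f$ the commutator $[P_+,\mathcal{M}_f]$ is a pseudodifferential operator of order $-1$, hence compact. The paper states this tersely; you usefully make explicit the point it leaves implicit, namely that $P_+$ is itself a classical order-zero pseudodifferential operator (via $P_+=\tfrac12(I+F)+R$ with $F=\bar D(I+\bar D^2)^{-1/2}$ and $R$ smoothing) and that $\mathcal{M}_f$ has scalar principal symbol, which is exactly why the commutator drops an order.
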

\begin{proof}
If $f$ is a smooth function, then $\mathcal{M}_f$ is a pseudodifferential operator of order zero.
Therefore, the commutator $[P_+,\mathcal{M}_f]$ is a pseudodifferential operator of order $-1$, and hence compact.
Since  $\|\mathcal{M}_f\|=\|f\|_\infty$ we have $\|[P,\mathcal{M}_f]\|\le 2\|f\|_\infty$, so the map $f\mapsto [P_+,\mathcal{M}_f]$ is continuous.
The lemma follows since $C^\infty(M)$ is uniformly dense in $C(M)$ and the space of compact operators is closed in operator norm. 
\end{proof}

Now let $E$ be any smooth hermitian vector bundle on $M$.
%$D^E$ denotes the Dirac operator twisted by $E$.
%As above, form the Hilbert space $L^2_+(S\otimes E)$
%with a basis consisting of eigenvectors of $D^E$ for non-negative eigenvalues.
%$P^E_+$ denotes the orthogonal projection 
%\[ P^E_+:L^2(S\otimes E)\to L^2_+(S\otimes E)\subset L^2(S\otimes E)\]
For a continous endomorphism $\alpha$ of $E$ (viewed as a topological vector bundle),
$\mathcal{M}_\alpha$ denotes the bounded operator on $L^2(M,S\otimes E)$
determined by $I_S\otimes \alpha$.
Here $\alpha$ is not necessarily an automorphism of $E$.

\begin{lemma}\label{lem:comm2}
The commutator $[P_+^E,\mathcal{M}_\alpha]$ is compact for every continuous endomorphism $\alpha$ of $E$.
\end{lemma}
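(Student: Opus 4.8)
The plan is to reduce Lemma \ref{lem:comm2} to the scalar case already handled in the previous lemma. First I would pick a smooth vector bundle $E'$ on $M$ such that $E\oplus E' \cong M\times\CC^N$ is trivialized, and extend $\alpha$ to an endomorphism $\tilde\alpha = \alpha\oplus 0$ of $M\times\CC^N$ (the zero endomorphism on the $E'$ summand). Since $P_+^{E\oplus E'} = P_+^E\oplus P_+^{E'}$ under the orthogonal decomposition $L^2(M,S\otimes(E\oplus E')) = L^2(M,S\otimes E)\oplus L^2(M,S\otimes E')$, and $\mathcal{M}_{\tilde\alpha}$ is block-diagonal with blocks $\mathcal{M}_\alpha$ and $0$, the commutator $[P_+^{E\oplus E'},\mathcal{M}_{\tilde\alpha}]$ is block-diagonal with upper-left block $[P_+^E,\mathcal{M}_\alpha]$. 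Hence it suffices to prove the lemma for the trivial bundle $M\times\CC^N$ and an arbitrary continuous endomorphism.

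Next I would treat the trivial-bundle case by writing $\tilde\alpha$ as an $N\times N$ matrix $(f_{ij})$ of continuous functions on $M$. Under the identification $L^2(M,S\otimes(M\times\CC^N)) \cong L^2(M,S)^{\oplus N}$, the projection $P_+^{M\times\CC^N}$ is $P_+$ applied diagonally in each of the $N$ slots, and $\mathcal{M}_{\tilde\alpha}$ acts by the matrix $(\mathcal{M}_{f_{ij}})$ of scalar multiplication operators. Therefore the commutator $[P_+^{M\times\CC^N},\mathcal{M}_{\tilde\alpha}]$ has $(i,j)$ entry equal to $[P_+,\mathcal{M}_{f_{ij}}]$, which is compact for each $i,j$ by the scalar lemma. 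A finite matrix of compact operators is compact, so $[P_+^{M\times\CC^N},\mathcal{M}_{\tilde\alpha}]$ is compact, and pulling back through the block-diagonal reduction above gives that $[P_+^E,\mathcal{M}_\alpha]$ is compact.

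The only mildly subtle point — which I would state carefully rather than belabor — is the identification $P_+^{E\oplus E'} = P_+^E\oplus P_+^{E'}$, i.e.\ that the spectral decomposition of the Dirac operator twisted by $E\oplus E'$ is the orthogonal direct sum of the decompositions for $E$ and for $E'$. This is immediate because $D^{E\oplus E'} = D^E\oplus D^{E'}$ as differential operators, hence their closures and spectral projections decompose accordingly; the nonnegative-eigenvalue subspace of a direct sum of self-adjoint operators is the direct sum of the nonnegative-eigenvalue subspaces. With that observation in hand, everything else is formal bookkeeping with block matrices, so I do not expect any real obstacle. (One could alternatively bypass $E'$ entirely by working locally with a finite partition of unity and local trivializations of $E$, but the stabilization argument is cleaner.)
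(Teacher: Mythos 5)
Your proof is correct and matches the paper's argument essentially step for step: stabilize $E$ by a complement $E'$ to a trivial bundle, use $D^{E\oplus E'}=D^E\oplus D^{E'}$ (hence $P_+^{E\oplus E'}=P_+^E\oplus P_+^{E'}$) and $\mathcal{M}_{\alpha\oplus 0}=\mathcal{M}_\alpha\oplus 0$ to reduce to the trivial case, then treat the trivial case as a matrix of scalar commutators. The only cosmetic difference is that the paper presents the trivial-bundle case first and then the stabilization step, while you present them in the opposite order; the content is identical.
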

\begin{proof}
%This follows by combining Proposition \ref{prop:commutator1} with the following three facts.
If $E=M\times \CC^r$ is a trivial bundle, then $\mathcal{M}_\alpha$
is a $r\times r$ matrix of multiplication operators by functions.
Then $[P^E_+,\mathcal{M}_\alpha]$ is a matrix of commutators of $P_+$ with functions, each of which is compact by the previous lemma.
Thus $[P_+^E,\mathcal{M}_\alpha]$  is compact when $E$ is a trivial vector bundle. 

In general, choose $F$ such that $E\oplus F$ is a trivial vector bundle.
Then we can take $D^{E\oplus F}=D^E\oplus D^F$, hence $P^{E\oplus F}_+=P^E_+\oplus P^F_+$.
Also $\mathcal{M}_{\alpha\oplus 0} = \mathcal{M}_\alpha\oplus 0$, 
and so $[P_+^{E\oplus F}, \mathcal{M}_{\alpha\oplus 0}]=[P_+^E, \mathcal{M}_\alpha]\oplus 0$.
Since $[P_+^{E\oplus F}, \mathcal{M}_{\alpha\oplus 0}]$ is compact, so is $[P_+^E, \mathcal{M}_\alpha]$.

\end{proof}

%The Toeplitz operator $T_\alpha$ is the composition of $\mathcal{M}_\alpha:L^2_+\to L^2$
%with $P_+^E:L^2\to L_2+$,
%\[ T_\alpha =P_+^E\mathcal{M}_\alpha : L^2_+(S\otimes E)\to L^2_+(S\otimes E)\]

\begin{proposition}
If $\alpha, \beta$ are two endomorphisms of $E$, then $T_\alpha T_\beta-T_{\alpha\beta}$ is a compact operator.
Therefore, if $\alpha$ is an automorphism of $E$ then $T_\alpha$ is a Fredholm operator.
\end{proposition}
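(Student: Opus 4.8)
The plan is to reduce the statement to the compactness of the commutators $[P_+^E,\mathcal{M}_\gamma]$ proved in Lemma \ref{lem:comm2}. First I would fix notation: regard $T_\alpha$ as the operator $P_+^E\mathcal{M}_\alpha P_+^E$ acting on $L^2(M,S\otimes E)$ --- this agrees with the Toeplitz operator of the introduction under the identification of $L^2_+(M,S\otimes E)$ with the range of $P_+^E$ --- and record the elementary identity $\mathcal{M}_{\alpha\beta}=\mathcal{M}_\alpha\mathcal{M}_\beta$, which holds because $I_S\otimes(\alpha\beta)=(I_S\otimes\alpha)(I_S\otimes\beta)$.

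Writing $P=P_+^E$ and using $P^2=P$, the computation is
\[
T_\alpha T_\beta = P\mathcal{M}_\alpha P\mathcal{M}_\beta P = P\mathcal{M}_\alpha\bigl(\mathcal{M}_\beta P + [P,\mathcal{M}_\beta]\bigr)P = P\mathcal{M}_{\alpha\beta}P + P\mathcal{M}_\alpha[P,\mathcal{M}_\beta]P = T_{\alpha\beta} + K,
\]
where $K=P\mathcal{M}_\alpha[P,\mathcal{M}_\beta]P$ is compact because $[P,\mathcal{M}_\beta]$ is compact by Lemma \ref{lem:comm2} and the remaining factors are bounded. This proves the first claim.

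For the Fredholm assertion I would specialize to $\beta=\alpha^{-1}$, which is again a continuous endomorphism of $E$ when $\alpha$ is an automorphism. The first claim then gives that $T_\alpha T_{\alpha^{-1}}-T_{I_E}$ and $T_{\alpha^{-1}}T_\alpha-T_{I_E}$ are both compact, while $T_{I_E}=P\mathcal{M}_{I_E}P=P^2=P$ is the identity on $L^2_+(M,S\otimes E)$. Hence $T_{\alpha^{-1}}$ is a two-sided parametrix for $T_\alpha$ modulo compact operators, so $T_\alpha$ is Fredholm by Atkinson's theorem.

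The argument is essentially formal once Lemma \ref{lem:comm2} is available. The only point needing a little care --- and the closest thing to an obstacle --- is the bookkeeping between operators on $L^2(M,S\otimes E)$ and operators on the subspace $L^2_+(M,S\otimes E)$, and in particular the observation that $T_{I_E}$ is literally the identity of that subspace, not merely the identity modulo compacts.
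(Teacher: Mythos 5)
Your proof is correct and follows essentially the same route as the paper's: both use Lemma~\ref{lem:comm2} to show $T_\alpha T_\beta \sim T_{\alpha\beta}$ modulo compacts, then apply Atkinson's theorem with $\beta=\alpha^{-1}$. Your version is slightly more explicit about the commutator substitution and about $T_{I_E}$ being the identity on $L^2_+$, but the underlying argument is identical.
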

\begin{proof}
Due to Lemma \ref{lem:comm2}, modulo compact operators we have
\[T_\alpha T_\beta = P\mathcal{M}_\alpha P \mathcal{M}_\beta \sim PP\mathcal{M}_\alpha \mathcal{M}_\beta = P\mathcal{M}_{\alpha\beta}=T_{\alpha\beta}\]
It follows that if  $\alpha$ is an automorphism with inverse $\beta$, then $T_\alpha T_\beta-I$ and $T_\beta T_\alpha-I$ are compact operators.
Therefore $T_\alpha$ is Fredholm by Atkinson's theorem.
\end{proof}

\section{Toeplitz operators on compact manifolds with boundary}\label{sec3}

In this section $\Omega$ is  a compact even dimensional Spin$^c$ manifold with  boundary $M=\partial \Omega$, and $D_\Omega$ is the Dirac operator of $\Omega$, acting on sections in the graded spinor bundle $S^+\oplus S^-$,
\[ D_\Omega: C_c^\infty(\Omega\setminus M, S^+) \to C_c^\infty(\Omega\setminus M, S^-)\] 
%$E$ is a smooth hermitian vector bundle on $\Omega$, and  
%$D_E$ denotes $D$ twisted by $E$,
%\[ D_E: C_c^\infty(\Omega\setminus M, S^+\otimes E) \to C_c^\infty(\Omega\setminus M, S^-\otimes E)\] 
We denote by $\bar{D}_\Omega$ the maximal closed extension  of $D_\Omega$.
$\bar{D}_\Omega$ is a closed unbounded Hilbert space operator with domain
\[ \{u\in L^2(\Omega,S^+)\;\mid \; D_\Omega u \in L^2(\Omega, S^-)\}\]
where $D_\Omega u$ is taken in the distributional sense.
Denote the kernel of $\bar{D}_\Omega$ by
\[\Hardy  = \{u\in L^2(\Omega,S^+)\;\mid \; D_\Omega u = 0 \} \]
and let $Q$ be the Hilbert space projection of  $L^2(\Omega, S^+)$
onto the closed linear subspace $\Hardy$,
\[ Q:L^2(\Omega,S^+)\to \Hardy\]
%If $\theta$ is any endomorphism of $E$, then $\mathcal{M}_{\theta}$ denotes the bounded operator on $L^2(S^+\otimes E)$ obtained from $I_{S^+}\otimes \theta$.
%If $E$ is the trivial line bundle, $Q_E$ is denoted $Q$.
For a continuous function $f\in C(\Omega)$, let $\mathcal{M}_f$ denote the multiplication operator 
\[ \MM_f : L^2(\Omega, S^+\oplus S^-)\to L^2(\Omega, S^+\oplus S^-)\qquad (\MM_fu)(x) = f(x)u(x) \]
$\MM^+_f$ and $\MM_f^-$ are the restrictions of $\MM_f$ to positive and negative spinors respectively.

\begin{proposition}\label{prop:commutator1}
%For a continuous function $f\in C(\Omega)$, let $\mathcal{M}^+_f$ denote the multiplication operator on $L^2(S^+)$.
The commutator $[Q, \mathcal{M}^+_f]$ is a compact operator for all $f\in C(\Omega)$.
Moreover, $\mathcal{M}^+_f Q$ is compact if  $f(x)=0$ for all $x\in \partial \Omega$.
\end{proposition}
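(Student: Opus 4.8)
\emph{Interior regularity and the second assertion.}
The second assertion is the easy one. I would first record the interior estimate for the maximal extension: for cutoffs $\chi,\psi\in C^\infty_c(\Omega\setminus M)$ with $\psi\equiv 1$ on a neighbourhood of $\mathrm{supp}\,\chi$ and $u$ in the domain of $\bar D_\Omega$, one has $\|\chi u\|_{H^1}\lesssim\|\psi u\|_{L^2}+\|\psi D_\Omega u\|_{L^2}$. Applied to $u\in\Hardy$ this gives $\|\chi u\|_{H^1}\lesssim\|u\|_{L^2}$, so by the Rellich lemma $\mathcal{M}^+_\chi Q$ is compact for every $\chi\in C^\infty_c(\Omega\setminus M)$. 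Now a continuous $f$ with $f|_{\partial\Omega}=0$ is a uniform limit of smooth functions vanishing near $\partial\Omega$ (cut off near the boundary where $|f|$ is small, then mollify), and such a function equals $\chi f$ for a suitable interior cutoff $\chi$, so $\mathcal{M}^+_f Q=\mathcal{M}^+_f\,\mathcal{M}^+_\chi Q$ is compact; since $f\mapsto\mathcal{M}^+_f Q$ is norm continuous ($\|\mathcal{M}^+_f Q\|\le\|f\|_\infty$) and the compact operators are norm closed, $\mathcal{M}^+_f Q$ is compact for every continuous $f$ vanishing on $\partial\Omega$.

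\emph{A compact Green operator for $\bar D_\Omega$.}
For the commutator I would reduce to smooth $f$ by the analogous density argument ($\|[Q,\mathcal{M}^+_f]\|\le 2\|f\|_\infty$, $C^\infty(\Omega)$ dense in $C(\Omega)$) and then build a compact right inverse $G$ of $\bar D_\Omega$ with range in $\Hardy^\perp$. Set $\Box:=\bar D_\Omega\,\bar{D}_\Omega^{*}$; by von Neumann's theorem this is a non-negative self-adjoint operator on $L^2(\Omega,S^-)$, namely the one associated to the quadratic form $w\mapsto\|\bar{D}_\Omega^{*}w\|^2$ on $\mathrm{dom}\,\bar{D}_\Omega^{*}$. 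Since $\bar{D}_\Omega^{*}$ is the minimal closed extension of the formal adjoint $D_\Omega^{\,t}$ of $D_\Omega$, its domain consists of the $H^1$-sections of $S^-$ vanishing on $M$, so $\Box$ is exactly the Dirichlet realization of the Laplace-type operator $D_\Omega D_\Omega^{\,t}$; hence $\Box$ has compact resolvent and, by elliptic regularity up to the boundary for the Dirichlet problem, its pseudo-inverse $N$ maps $L^2$ continuously into $H^2$. Put $G:=\bar{D}_\Omega^{*}N$. Then $G$ maps $L^2$ continuously into $H^1$ and is therefore compact on $L^2$; its range lies in $\mathrm{ran}\,\bar{D}_\Omega^{*}\subseteq(\ker\bar D_\Omega)^\perp=\Hardy^\perp$; and $\bar D_\Omega G v=\Box N v=v$ for every $v\in\mathrm{ran}\,\bar D_\Omega$, because $\mathrm{ran}\,\bar D_\Omega$ is orthogonal to $\ker\bar{D}_\Omega^{*}=\ker\Box$, so the finite-rank correction term $\Box N-I$ annihilates such $v$.

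\emph{The commutator.}
Now the algebra closes. For smooth $f$ and $u\in\Hardy$ one has $D_\Omega(fu)=fD_\Omega u+c(df)\,u=c(df)\,u$, where $c(df)$ is Clifford multiplication by $df$, a zeroth-order bundle map $S^+\to S^-$; hence $\mathcal{M}^+_f u\in\mathrm{dom}\,\bar D_\Omega$ and $\bar D_\Omega\,\mathcal{M}^+_f u=\mathcal{M}_{c(df)}u\in\mathrm{ran}\,\bar D_\Omega$, with $\mathcal{M}_{c(df)}:L^2(\Omega,S^+)\to L^2(\Omega,S^-)$. Applying $G$, the sections $G\,\mathcal{M}_{c(df)}u$ and $\mathcal{M}^+_f u$ have the same image under $\bar D_\Omega$, hence differ by an element of $\Hardy$; as $G\,\mathcal{M}_{c(df)}u\perp\Hardy$ this forces $(I-Q)\mathcal{M}^+_f Q=G\,\mathcal{M}_{c(df)}\,Q$, a compact operator. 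Writing
\[ [Q,\mathcal{M}^+_f]=Q\mathcal{M}^+_f(I-Q)-(I-Q)\mathcal{M}^+_f Q, \]
the second term is $-\,G\,\mathcal{M}_{c(df)}\,Q$, and the first term is the adjoint of $(I-Q)\mathcal{M}^+_{\bar f}Q=G\,\mathcal{M}_{c(d\bar f)}\,Q$; both are compact, so $[Q,\mathcal{M}^+_f]$ is compact, and the density reduction finishes the first assertion.

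\emph{The main obstacle.}
The one genuinely analytic point is the identification of $\Box=\bar D_\Omega\bar{D}_\Omega^{*}$ with the Dirichlet realization of $D_\Omega D_\Omega^{\,t}$: one must know that the domain of the minimal extension $\bar{D}_\Omega^{*}$ consists of $H^1$-sections with vanishing boundary trace (and not merely of distributional solutions of $D_\Omega^{\,t}w\in L^2$), and then invoke elliptic regularity up to the boundary to conclude $\mathrm{dom}\,\Box\subseteq H^2$ and hence the compactness of $G$. This is standard elliptic boundary-value theory for first-order operators of Dirac type, but it is the only place where genuine manifold-with-boundary analysis enters; the rest of the argument is soft functional analysis.
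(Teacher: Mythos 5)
Your proposal is correct, and it takes a genuinely different route from the paper's proof. The paper works with the bounded transform $T=\bar{D}_\Omega(1+\bar{D}^*_\Omega\bar{D}_\Omega)^{-1/2}$ and the polar-decomposition partial isometry $V$, lists the standard Kasparov-module properties of $T$ (which only give commutator compactness for $f\in C_0(\Omega\setminus M)$), and then invokes Proposition~1.1 of \cite{BDT89} as a black box for the crucial upgrade to all $f\in C(\Omega)$; from $\ker T=\ker V=\ker\bar D_\Omega$ one then transfers compactness of $[\mathcal{M}^+_f,T^*T]$ to $[Q,\mathcal{M}^+_f]$. You instead construct an explicit compact Green operator $G=\bar{D}^*_\Omega N$ with range in $\Hardy^\perp$, derive the exact identity $(I-Q)\mathcal{M}^+_f Q = G\,\mathcal{M}_{c(df)}\,Q$ from the Leibniz rule $D_\Omega(fu)=c(df)u$ on $\Hardy$, and read off compactness of both off-diagonal blocks of $\mathcal{M}^+_f$ with respect to $\Hardy\oplus\Hardy^\perp$; the diagonal difference $[Q,\mathcal{M}^+_f]$ is then automatically compact. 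Your proof is more self-contained — it does not rest on citing \cite{BDT89} — but it does require two standard facts from boundary value theory which the paper is able to sidestep: that $\mathrm{dom}\,\bar{D}^*_\Omega=H^1_0(\Omega,S^-)$ (this uses the G\r{a}rding/Lichnerowicz estimate on compactly supported sections to identify the graph closure with the $H^1$ closure), and the $H^2$-regularity up to the boundary for the Dirichlet realization of $\bar{D}_\Omega\bar{D}^*_\Omega$. You correctly flag these as the genuine analytic content; they are precisely the inputs that Proposition~1.1 of \cite{BDT89} packages. Your proof of the second assertion — interior elliptic estimates plus Rellich for cutoffs supported away from $M$, followed by norm density — is also correct and is essentially what underlies the paper's third listed "standard property" of $T$.
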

\begin{proof}
Let $T$ be the bounded operator 
\[T=\bar{D}_\Omega(1+\bar{D}^*_\Omega\bar{D}_\Omega)^{-1/2}\]
and $V$ the partial isometry (with the same kernel as $\bar{D}_\Omega$) determined by the polar decomposition
\[ \bar{D}_\Omega= V|\bar{D}_\Omega|,\qquad |\bar{D}_\Omega| = (\bar{D}^*_\Omega\bar{D}_\Omega)^{1/2}\]
$T$ has the following standard properties:
\begin{itemize}
\item  $T-V$ is a compact operator.
\item $\mathcal{M}_f^+T-T\mathcal{M}_f^-$ is compact for all $f\in C_0(\Omega\setminus M)$.
\item $\mathcal{M}_f^+(I-T^*T)$ is compact for all $f\in C_0(\Omega\setminus M)$.
\end{itemize}
%Here  $\mathcal{M}^-_f$ is the multiplication operator on $L^2(\Omega, S^-)$.
Proposition 1.1 in \cite{BDT89} implies the much stronger property: 
\begin{itemize}
\item $\mathcal{M}_f^+T-T\mathcal{M}_f^-$ is compact for all $f\in C(\Omega)$.
\end{itemize}
Since also $T^*\mathcal{M}_f^+-\mathcal{M}_f^-T^*$ is compact for all $f\in C(\Omega)$,
we obtain that the commutators $[\mathcal{M}_f^+, T^*T]$ are compact.
Note that
\[ \mathrm{ker}\; \bar{D}_\Omega = \mathrm{ker}\; T = \mathrm{ker}\; V \]
Therefore $Q=I-V^*V$ differs from $I-T^*T$ by a compact operator.
Hence $[Q,\mathcal{M}_f^+]$ is compact.
Finally, the third property above implies that $\mathcal{M}^+_fQ$ is compact if $f\in C_0(\Omega\setminus M)$.
(For full details, see \cite{BDT89}.)

\end{proof}

An endomorphism $\theta$ of the trivial vector bundle $E=\Omega\times \CC^r$
is naturally identified with a continuous matrix-valued  function
\[ \theta:\Omega\to M_r(\CC)\] 
The multiplication operator $\MM_\theta$ is the bounded operator on the Hilbert space
\[ L^2(\Omega, S^+\otimes E) = L^2(\Omega, S^+)\otimes \CC^r\]
obtained from $I_{S^+}\otimes \theta$.
Denote $Q_r=Q\otimes I_r$, 
\[ Q_r: L^2(\Omega, S^+)\otimes \CC^r \to \Hardy \otimes \CC^r\]
where $I_r$ is the $r\times r$ identity matrix.

\begin{corollary}\label{comcomp}
For every  continuous map $\theta:\Omega\to M_r(\CC)$,
%With $E,\theta$ as above, the commutator $[Q_E,\mathcal{M}_\theta]$ is a compact operator on $L^2(S^+\otimes E)$.
the commutator $[Q_r, \MM_\theta]$ is a compact operator on $L^2(\Omega, S^+)\otimes \CC^r$.
%Moreover, $\mathcal_\theta Q_E$ is compact for every $\theta$ such that $\theta(x)=0$ for all $x\in M$.
%Here $Q_E$ is viewed as an operator from $L^2$ to $L^2$. 
Moreover, $\MM_\theta Q_r$ is compact for every $\theta$ such that $\theta(x)=0$ 
for all $x\in M$.
Here $Q_r$ is viewed as an operator from $L^2$ to $L^2$. 
\end{corollary}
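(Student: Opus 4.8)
The plan is to reduce the statement to Proposition \ref{prop:commutator1} by passing to matrix entries. Fix the standard basis of $\CC^r$ and write $\theta = (\theta_{ij})_{1\le i,j\le r}$ with each $\theta_{ij}\in C(\Omega)$. Under the identification $L^2(\Omega, S^+)\otimes\CC^r = \bigoplus_{j=1}^r L^2(\Omega, S^+)$, the operator $\MM_\theta$ (obtained from $I_{S^+}\otimes\theta$) is the $r\times r$ operator matrix $(\MM^+_{\theta_{ij}})$, while $Q_r = Q\otimes I_r$ is the diagonal matrix $\mathrm{diag}(Q,\dots,Q)$, since it acts trivially on the $\CC^r$ factor. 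Consequently the $(i,j)$ entry of $[Q_r,\MM_\theta]$ is $Q\MM^+_{\theta_{ij}} - \MM^+_{\theta_{ij}}Q = [Q,\MM^+_{\theta_{ij}}]$, and the $(i,j)$ entry of $\MM_\theta Q_r$ is $\MM^+_{\theta_{ij}}Q$.

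For the first claim, Proposition \ref{prop:commutator1} gives that $[Q,\MM^+_{\theta_{ij}}]$ is compact on $L^2(\Omega,S^+)$ for all $i,j$. An operator given by an $r\times r$ matrix of operators is a finite sum of operators of the form (coordinate inclusion)$\circ$(matrix entry)$\circ$(coordinate projection), and composing a compact operator with bounded operators, or forming a finite sum of compact operators, again yields a compact operator; hence $[Q_r,\MM_\theta]$ is compact. For the second claim, if $\theta$ vanishes on $M=\partial\Omega$ then every entry $\theta_{ij}$ lies in $C_0(\Omega\setminus M)$, so the ``moreover'' part of Proposition \ref{prop:commutator1} shows each $\MM^+_{\theta_{ij}}Q$ is compact; the same matrix-of-compacts argument then shows $\MM_\theta Q_r$ is compact.

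There is no genuine obstacle here; the argument is bookkeeping. The only point worth checking is that in the chosen basis $Q_r$ is honestly block diagonal and $\MM_\theta$ is honestly the matrix $(\MM^+_{\theta_{ij}})$ — both immediate from $Q_r=Q\otimes I_r$ and $\MM_\theta$ being induced by $I_{S^+}\otimes\theta$ — after which the scalar statements of Proposition \ref{prop:commutator1} apply verbatim to each entry.
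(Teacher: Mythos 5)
Your proof is correct and is essentially the argument the paper intends: the paper's proof simply says ``the proof is the same as the proof of Lemma \ref{lem:comm2},'' whose trivial-bundle case is precisely the matrix-entry reduction you carry out, applying Proposition \ref{prop:commutator1} to each entry $\theta_{ij}$. You have just written out explicitly what the paper leaves implicit.
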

\begin{proof}
The proof is the same as the proof of Lemma \ref{lem:comm2}.
\end{proof}

The Toeplitz operator $T_\theta$  is the composition of $\MM_\theta$ with $Q_r$,
\[ T_\theta =Q_r\mathcal{M}_\theta : \Hardy\otimes \CC^r \to \Hardy\otimes \CC^r\]

\begin{proposition}
%If $\alpha, \beta$ are two endomorphisms of $E$, then $T_\alpha T_\beta-T_{\alpha\beta}$ is compact.
%If $\alpha(x)=0$ for all $x\in \partial\Omega$ then $T_\alpha$ is compact.
%Therefore $T_\alpha$ is a Fredholm operator if $\alpha|M$ is an automorphism of $E|M$.
If $\theta, \eta$ are two continuous maps $\Omega\to M_r(\CC)$, then $T_\theta T_\eta-T_{\theta\eta}$ is compact.
If $\theta(x)=0$ for all $x\in M=\partial\Omega$ then $T_\theta$ is compact.
Therefore $T_\theta$ is a Fredholm operator if $\theta(x)$ is an invertible matrix for every $x\in M$.
\end{proposition}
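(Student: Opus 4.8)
The plan is to imitate, almost verbatim, the proof of the corresponding proposition for closed manifolds, replacing $P^E_+$ by $Q_r$ and using Corollary \ref{comcomp} in place of Lemma \ref{lem:comm2}. The argument has three steps: multiplicativity modulo compacts, vanishing-on-the-boundary implies compactness, and a parametrix construction via Tietze extension.

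First I would prove that $T_\theta T_\eta - T_{\theta\eta}$ is compact. Since $Q_r$ is an orthogonal projection, $Q_r^2=Q_r$, and since $[Q_r,\MM_\theta]$ is compact by Corollary \ref{comcomp}, we have modulo compact operators
\[ T_\theta T_\eta = Q_r\MM_\theta Q_r\MM_\eta \;\sim\; Q_r Q_r \MM_\theta\MM_\eta = Q_r\MM_{\theta\eta} = T_{\theta\eta}. \]
Next, if $\theta(x)=0$ for all $x\in M=\partial\Omega$, then Corollary \ref{comcomp} gives that $\MM_\theta Q_r$ is compact as an operator on $L^2(\Omega,S^+)\otimes\CC^r$; hence $T_\theta = Q_r\MM_\theta Q_r$ is compact, and so is its restriction and corestriction to $\Hardy\otimes\CC^r$.

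For the Fredholm statement, suppose $\theta(x)$ is invertible for every $x\in M$. The map $x\mapsto \theta(x)^{-1}$ is continuous on the closed set $\partial\Omega$ with values in $M_r(\CC)$, so by the Tietze extension theorem it extends to a continuous map $\eta:\Omega\to M_r(\CC)$ (crucially, $\eta$ need not be invertible in the interior). Then $\theta\eta - I$ and $\eta\theta - I$ vanish on $\partial\Omega$. Since $\vartheta\mapsto T_\vartheta$ is linear and $T_I = Q_r$ is the identity on $\Hardy\otimes\CC^r$, the second step shows that $T_{\theta\eta}-I = T_{\theta\eta - I}$ and $T_{\eta\theta}-I = T_{\eta\theta-I}$ are compact. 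Combining this with the first step, $T_\theta T_\eta - I$ and $T_\eta T_\theta - I$ are compact, so $T_\theta$ is Fredholm by Atkinson's theorem.

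I do not expect a serious obstacle here, as every ingredient is already in place. The one point that is not entirely mechanical is recognizing that one should \emph{not} try to invert $\theta$ globally — it is only assumed invertible on $\partial\Omega$ and may well fail to be invertible in the interior — but instead extend the boundary inverse arbitrarily by Tietze and lean on the ``vanishing on $\partial\Omega$ $\Rightarrow$ compact'' clause to kill the resulting error terms.
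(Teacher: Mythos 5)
Your proof is correct and follows essentially the same route as the paper's: both treat $T_\theta T_\eta \sim T_{\theta\eta}$ modulo compacts via the commutator corollary, both handle the boundary-vanishing case via the second clause of Corollary~\ref{comcomp}, and both produce a parametrix by extending $\theta|_M^{-1}$ (via Tietze, without demanding invertibility in the interior) and noting that $\theta\eta - I$ and $\eta\theta - I$ vanish on $\partial\Omega$. Your write-up is merely a bit more explicit than the paper's terse version of the same argument.
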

\begin{proof}
The first two statements follow from Proposition \ref{prop:commutator1}.
Now let $\theta:\Omega\to M_r(\CC)$ be a continuous function,
and $\theta(x)$ invertible for every $x\in M$.
Let $\eta(x) = \theta(x)^{-1}$, and extend $\eta$ to a continuous function $\eta:\Omega\to M_r(\CC)$.
Then modulo compact operators $T_{\theta\eta}\sim I$ and so $T_{\theta}T_\eta\sim T_\eta T_{\theta}\sim I$.
\end{proof} 

\begin{remark}
Results  closely related to the content of this section were proved by Venugopalkrishna \cite{V72} in the special case where $\Omega$ is a strongly pseudoconvex domain in $\CC^n$.
For the general case of compact Spin$^c$ manifolds with boundary see \cite{BDT89}, and also \cite{BD91}.
\end{remark}

\section{The trace map and the Calderon projection}\label{sec4}

In this section we show how the index of Toeplitz operators on $\Omega$ is related to the index of Toeplitz operators on $M$.
As in the previous section, $\Omega$ is  a compact even dimensional Spin$^c$ manifold
% with  boundary $M=\partial \Omega$.\
and $M$ is the boundary of $\Omega$.
$D_\Omega$ is the Dirac operator of $\Omega$, acting on sections of the spinor bundles $S^+$ and $S^-$.
$D_M$ is the Dirac operator of $M$, acting on sections of the spinor bundle $S$.
We identify $S^+|M = S$.

We denote the $L^2$ Sobolev space of degree $s\in \RR$ (on $\Omega$ or on $M$)  by $W_s$.
Note that $W_0=L^2$.
$C^\infty(\Omega, S^+)$ is the space of sections of $S^+$ that are smooth on $\Omega$, 
which means, in particular, that all derivatives up to all orders extend continuously to  the boundary $M$.
For a smooth section $s\in C^\infty(\Omega, S^+)$,
let $\gamma(s)\in C^\infty(M,S)$ denote the restriction of $s$ to $M$.
The restriction map $\gamma$ extends to a bounded linear  map on Sobolev spaces, called the trace map,
\[ \gamma_s:W_{s+\frac{1}{2}}(\Omega, S^+)\to W_{s}(M,S)\qquad s\in \RR\]
Let $W_s^\natural(M,S)$ be the subspace of $W_s(M,S)$ consisting of restrictions to the boundary (via the trace map) of  distributional solutions of $D_\Omega u=0$,
\[ W_s^\natural(M,S) = \{ \gamma_s(u) \mid u \in W_{s+\frac{1}{2}}(\Omega,S^+), \; D_\Omega u=0\}\]
The Calderon projection $P_\natural$ is the orthogonal projection of $L^2$ onto $W^\natural_0$,
\[ P_\natural:L^2(M,S)\to W_0^\natural(M,S)\subset L^2(M,S)\]  
%differs from projection $P:L^2(M,S)\to L^2_+(M,S)$ by a compact operator.
$P_\natural$ is a pseudodifferential operator of order zero.
For all $s\in \RR$,  the range of the idempotent $P_\natural:W_{s}\to W_{s}$ is $W_s^\natural(M,S)$.

\begin{proposition}\label{propa}
The bounded operator $F:=P_\natural(1+D_M^2)^{-1/4}\circ \gamma_{-\frac{1}{2}}$,
\[ F:\Hardy \to W_0^\natural(M,S)\]
is Fredholm.
\end{proposition}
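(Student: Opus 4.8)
The plan is to factor $F$ through a closed subspace of $L^2(M,S)$ as a topological isomorphism followed by a Fredholm operator. Put $A:=(1+D_M^2)^{-1/4}$; by Seeley's theorem this is a classical pseudodifferential operator on the closed manifold $M$ of order $-\tfrac12$ with scalar principal symbol, and for every $s\in\RR$ it restricts to a Banach space isomorphism $W_s(M,S)\to W_{s+1/2}(M,S)$ with inverse $A^{-1}=(1+D_M^2)^{1/4}$ of order $+\tfrac12$. Set $H:=A\bigl(W_{-1/2}^\natural(M,S)\bigr)$. Since $W_{-1/2}^\natural(M,S)$ is the range of the bounded idempotent $P_\natural\colon W_{-1/2}\to W_{-1/2}$ it is closed, and since $A\colon W_{-1/2}\to W_0=L^2$ is an isomorphism, $H$ is a closed subspace of $L^2(M,S)$. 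Then $F=(P_\natural|_H)\circ(A\circ\gamma_{-1/2})$, and it suffices to show the first factor is Fredholm and the second is an isomorphism onto $H$.

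First I would verify that $\gamma_{-1/2}\colon\Hardy\to W_{-1/2}^\natural(M,S)$ is a topological isomorphism. It is bounded by the mapping property of the trace map, and surjective directly from the definition of $W_{-1/2}^\natural(M,S)$ as the image of $\Hardy$ under $\gamma_{-1/2}$. Injectivity is weak unique continuation from the boundary: an $L^2$-solution of $D_\Omega u=0$ whose boundary trace vanishes extends by zero to a distributional solution of the Dirac equation across $M$, which by interior elliptic regularity and unique continuation must vanish identically (part of the boundary theory of \cite{BDT89}). Since $\Hardy$ is closed in $L^2(\Omega,S^+)$ and $W_{-1/2}^\natural$ is closed in $W_{-1/2}$, the open mapping theorem promotes the bounded bijection $\gamma_{-1/2}$ to an isomorphism; composing with the isomorphism $A$ shows $A\circ\gamma_{-1/2}\colon\Hardy\to H$ is a topological isomorphism onto the closed subspace $H\subseteq L^2(M,S)$.

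Next I would treat $P_\natural|_H\colon H\to W_0^\natural(M,S)$. Introduce $E:=AP_\natural A^{-1}$ as an operator on $L^2(M,S)$; it is bounded, being the composite $L^2=W_0\xrightarrow{A^{-1}}W_{-1/2}\xrightarrow{P_\natural}W_{-1/2}\xrightarrow{A}W_0$, it is idempotent because $A^{-1}A=I$ and $P_\natural^2=P_\natural$, and its range is $A(W_{-1/2}^\natural)=H$ since $A^{-1}\colon L^2\to W_{-1/2}$ is onto. The crucial point is that $E$ and $P_\natural$ differ by a compact operator: using $AA^{-1}=I$ one has $E-P_\natural=A\,[P_\natural,A^{-1}]$, and because $A^{-1}=(1+D_M^2)^{1/4}$ has scalar principal symbol the commutator $[P_\natural,A^{-1}]$ is a pseudodifferential operator of order $0+\tfrac12-1=-\tfrac12$; composing with $A$ of order $-\tfrac12$ yields a pseudodifferential operator of order $-1$ on the closed manifold $M$, hence a compact operator on $L^2$.

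Finally I would invoke the elementary fact that if $P$ and $E$ are bounded idempotents on a Hilbert space with $P-E$ compact, then $P|_{\mathrm{range}(E)}\colon\mathrm{range}(E)\to\mathrm{range}(P)$ is Fredholm, the operator $E|_{\mathrm{range}(P)}$ serving as a parametrix: with $K=P-E$ one checks $(E|_{\mathrm{range}(P)})\circ(P|_{\mathrm{range}(E)})=I+(EK)|_{\mathrm{range}(E)}$ on $\mathrm{range}(E)$ and $(P|_{\mathrm{range}(E)})\circ(E|_{\mathrm{range}(P)})=I-(PK)|_{\mathrm{range}(P)}$ on $\mathrm{range}(P)$, both of the form identity plus compact. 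Applying this with $P=P_\natural$ (so $\mathrm{range}(P_\natural)=W_0^\natural(M,S)$) and the idempotent $E$ above (so $\mathrm{range}(E)=H$) shows $P_\natural|_H\colon H\to W_0^\natural(M,S)$ is Fredholm, hence so is $F=(P_\natural|_H)\circ(A\circ\gamma_{-1/2})$. The step I expect to be the main obstacle is the second paragraph: establishing that the trace map $\gamma_{-1/2}$ — at the exponent where boundary traces are classically borderline — is well defined, bounded, and injective on $\Hardy$, which rests on the elliptic boundary theory for the Dirac operator on $\Omega$ (boundedness of the Poisson operator and unique continuation from the boundary), essentially Proposition~1.1 of \cite{BDT89}; the remaining steps are bookkeeping in the pseudodifferential calculus on $M$ together with the abstract lemma on idempotents.
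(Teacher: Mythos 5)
Your argument is correct, and it factors $F$ the same way the paper does — as $(P_\natural A)\circ\gamma_{-1/2}$ on $\Hardy$, with $A=(1+D_M^2)^{-1/4}$ — but it establishes Fredholmness of the middle piece $P_\natural A\colon W_{-1/2}^\natural\to W_0^\natural$ by a different mechanism. The paper constructs the auxiliary operator $B=P_\natural A P_\natural+(I-P_\natural)A(I-P_\natural)$, observes that $A-B$ has order $-3/2$ (by the scalar-symbol argument), so $B$ is an elliptic operator of order $-1/2$ commuting with $P_\natural$, and then restricts the Fredholm operator $B\colon W_{-1/2}\to W_0$ to the invariant subspace $W_{-1/2}^\natural$. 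You instead transport the Calderon idempotent by $A$ to get $E=AP_\natural A^{-1}$, show $E-P_\natural=A[P_\natural,A^{-1}]$ is a $\Psi$DO of order $-1$ and hence compact (the same scalar-symbol observation, one exponent shifted), and then invoke the abstract lemma that two idempotents differing by a compact operator restrict to a Fredholm map between their ranges. Both routes lean on exactly the same pseudodifferential fact; yours trades the elliptic-restriction argument for the idempotent lemma, which is arguably cleaner to state and avoids verifying ellipticity of an auxiliary operator on the whole Sobolev space. One small over-claim worth flagging: you assert that $\gamma_{-1/2}\colon\Hardy\to W_{-1/2}^\natural$ is injective and hence an isomorphism, invoking unique continuation from the boundary. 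That is true for Dirac operators, but it is an extra input; the paper is more economical here, only asserting that the kernel — the space of $L^2$-solutions of the Dirichlet problem $D_\Omega u=0$, $\gamma_{-1/2}u=0$ — is finite dimensional, which already makes $\gamma_{-1/2}$ Fredholm, and Fredholm composed with Fredholm is all you need. Your argument in fact goes through unchanged if you weaken "isomorphism" to "Fredholm onto $H$" in the second paragraph, so this is cosmetic rather than a gap.
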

\begin{proof}
The  space of $L^2$-solutions  $u\in L^2(\Omega, S^+)$ of the Dirichlet problem
\[ D_\Omega u=0, \qquad \gamma_{-\frac{1}{2}}(u)=0\]
is finite dimensional.
Therefore the surjective map
\[ \gamma_{-\frac{1}{2}}: \Hardy \to W_{-\frac{1}{2}}^\natural(M,S)\]
is a Fredholm operator.

Denote $A=(1+D_M^2)^{-1/4}$.
$A$ is an elliptic pseudodifferential operator of order $-1/2$.
%and bounded as an operator $W^{-\frac{1}{2}}\to L^2$.
$A$ has scalar symbol, and so the commutator $[A, P_\natural]$ is of order $-3/2$.
%and therefore compact as an operator $W^{-\frac{1}{2}}\to L^2$.
Let $B$ be the pseudodifferential operator of order $-1/2$,
\[ B = P_\natural AP_\natural +(I-P_\natural )A(I-P_\natural )\]
$A-B$ is of order $-3/2$,
\begin{align*}
  A-B&= P_\natural A(I-P_\natural )  +(I-P_\natural )AP_\natural \\
  &= [P_\natural ,A](I-P_\natural ) +(I-P_\natural )[A,P_\natural ]
  \end{align*}
  and so $A$ and $B$ have the same principal symbol.
Thus $B$ is elliptic, and the bounded operator
\[B:W_{-\frac{1}{2}}(M,S)\to L^2(M,S)\]
is Fredholm. Because $B$ commutes with $P_\natural$, $B$ restricts to a bounded operator 
\[ B_\natural: W_{-\frac{1}{2}}^\natural (M,S)\to W_0^\natural(M,S)\]
which is also Fredholm.
Finally,  $F=P_\natural A\circ \gamma_{-\frac{1}{2}}=B_\natural\circ \gamma_{-\frac{1}{2}}$ is the composition of two Fredholm operators.

\end{proof}

For a smooth funtion $\tilde{f}\in C^\infty(\Omega)$,
let $T_{\tilde{f}}=Q\mathcal{M}_{\tilde{f}}$ be the operator
\[ T_{\tilde{f}} : \Hardy \to \Hardy\]
with $Q$ as in section \ref{sec3}.
If $f$ is the restriction of $\tilde{f}$ to $M$, let $T^\natural_f=P_\natural\mathcal{M}_f$ be the operator
\[ T_{f}^\natural : W^0_\natural(M,S)\to W^0_\natural(M,S)\]

\begin{proposition}\label{propb} 
With $F=P_\natural(1+D_M^2)^{-1/4}\circ \gamma_{-\frac{1}{2}}$ as above,
the diagram
\[ \xymatrix{   \Hardy \ar[r]^{T_{\tilde{f}}}\ar[d]_F &  \Hardy \ar[d]^{F}& \\
 W^0_\natural(M,S) \ar[r]^{T_f^\natural} & W^0_\natural(M,S)}
\]
commutes modulo compact operators, i.e. $FT_{\tilde{f}}-T^\natural_f F$ is a compact operator for any $\tilde{f}\in C^\infty(\Omega)$, and $f=\tilde{f}|M$.
\end{proposition}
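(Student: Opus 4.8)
The plan is to strip the two Toeplitz operators one factor at a time, using Proposition \ref{prop:commutator1} to replace $Q$ by the identity on $\Hardy$, and to reduce everything to the assertion that one pseudodifferential operator on $M$ has order $\le -3/2$. Write $A := (1+D_M^2)^{-1/4}$, a pseudodifferential operator of order $-1/2$ on $M$ with scalar principal symbol, so that $F = P_\natural A\,\gamma_{-\frac{1}{2}}$ as an operator on $\Hardy$. Throughout we use that $\gamma_{-\frac{1}{2}}\colon L^2(\Omega,S^+)\to W_{-1/2}(M,S)$ and $A\colon W_{-1/2}(M,S)\to L^2(M,S)$ are bounded, that $P_\natural$ and $\mathcal{M}_f$ are pseudodifferential operators of order $0$ on $M$, and that $P_\natural$ is idempotent.

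Two elementary identities start the reduction. First, restriction of a product is the product of the restrictions, so $\gamma(\mathcal{M}_{\tilde f}s)=\mathcal{M}_f\gamma(s)$ for $s\in C^\infty(\Omega,S^+)$; since both sides are bounded $L^2(\Omega,S^+)\to W_{-1/2}(M,S)$ and $C^\infty(\Omega,S^+)$ is dense, the identity $\gamma_{-\frac{1}{2}}\mathcal{M}_{\tilde f}=\mathcal{M}_f\gamma_{-\frac{1}{2}}$ holds on all of $L^2(\Omega,S^+)$. Second, by Proposition \ref{prop:commutator1} the commutator $[Q,\mathcal{M}_{\tilde f}]$ is compact on $L^2(\Omega,S^+)$; restricted to $\Hardy$, where $Q$ acts as the identity, this says that the operator $\Hardy\to L^2(\Omega,S^+)$ sending $u$ to $T_{\tilde f}u-\mathcal{M}_{\tilde f}u$ is compact. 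Combining these, and writing $\sim$ for equality modulo compact operators, we get on $\Hardy$
\[ FT_{\tilde f}=P_\natural A\,\gamma_{-\frac{1}{2}}\,T_{\tilde f}\ \sim\ P_\natural A\,\gamma_{-\frac{1}{2}}\,\mathcal{M}_{\tilde f}=P_\natural A\,\mathcal{M}_f\,\gamma_{-\frac{1}{2}},\]
the middle step composing a compact operator on the left with the bounded operator $P_\natural A\,\gamma_{-\frac{1}{2}}$. Since $T^\natural_f F=P_\natural\mathcal{M}_f\,P_\natural A\,\gamma_{-\frac{1}{2}}$ exactly, we obtain
\[ FT_{\tilde f}-T^\natural_f F\ \sim\ \big(P_\natural A\mathcal{M}_f-P_\natural\mathcal{M}_f P_\natural A\big)\,\gamma_{-\frac{1}{2}}\ =\ Y\,\gamma_{-\frac{1}{2}},\]
where $Y:=P_\natural A\mathcal{M}_f-P_\natural\mathcal{M}_f P_\natural A$. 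It therefore suffices to show that $Y$ is a compact operator $W_{-1/2}(M,S)\to L^2(M,S)$, for then $Y\,\gamma_{-\frac{1}{2}}$ is compact on $\Hardy$.

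To prove this, write $A\mathcal{M}_f=\mathcal{M}_f A+[A,\mathcal{M}_f]$. Since $A$ has scalar principal symbol, $[A,\mathcal{M}_f]$ has order $-3/2$, by the reasoning used for $[A,P_\natural]$ in the proof of Proposition \ref{propa}. Hence
\[ Y=P_\natural\mathcal{M}_f(I-P_\natural)A+P_\natural[A,\mathcal{M}_f],\]
in which the second term has order $-3/2$. For the first term, the order-$0$ principal symbol of $P_\natural\mathcal{M}_f(I-P_\natural)$ is $p\cdot f\cdot(I-p)=f\,(p-p^2)=0$, because $f$ is scalar and the principal symbol $p$ of $P_\natural$ satisfies $p^2=p$; thus $P_\natural\mathcal{M}_f(I-P_\natural)$ has order $\le -1$, and $P_\natural\mathcal{M}_f(I-P_\natural)A$ has order $\le -3/2$. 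So $Y$ has order $\le -3/2$, hence is bounded $W_{-1/2}(M,S)\to W_1(M,S)$; since $M$ is a closed manifold the inclusion $W_1(M,S)\hookrightarrow L^2(M,S)$ is compact by Rellich's lemma, which finishes the proof.

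The main obstacle is the last paragraph; the rest is bookkeeping with the trace map and with the compactness results already established. The extra half-order gained by $Y$ is genuine, and it uses two facts at once: that $A$ and $\mathcal{M}_f$ have scalar symbols (so $[A,\mathcal{M}_f]$ drops one order) and that $P_\natural$ is a true projection (so $p(I-p)=0$). One must also watch the Sobolev degrees: the half derivative lost by $\gamma_{-\frac{1}{2}}$ is exactly recovered by $A$, so it is essential that $Y$ has order at most $-3/2$, and not merely negative order, for $Y\,\gamma_{-\frac{1}{2}}$ to land compactly in $L^2(M,S)$.
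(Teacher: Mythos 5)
Your proof is correct and follows essentially the same route as the paper's: reduce $T_{\tilde f}$ to $\mathcal{M}_{\tilde f}$ on $\Hardy$ via Proposition~\ref{prop:commutator1}, use the exact identity $\gamma_{-\frac{1}{2}}\mathcal{M}_{\tilde f}=\mathcal{M}_f\gamma_{-\frac{1}{2}}$, and then control the remaining error with the pseudodifferential calculus. Your decomposition $Y=P_\natural\mathcal{M}_f(I-P_\natural)A+P_\natural[A,\mathcal{M}_f]$ is algebraically equivalent to the paper's commutator shuffle (indeed $P_\natural\mathcal{M}_f(I-P_\natural)=-P_\natural[\mathcal{M}_f,P_\natural]$), but you are more explicit about why the error must have order $\le -\frac{3}{2}$ rather than merely negative order, which is precisely the Sobolev bookkeeping the paper leaves implicit after composing with $\gamma_{-\frac{1}{2}}\colon L^2(\Omega)\to W_{-\frac{1}{2}}(M)$.
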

\begin{proof}
Let $\sim$ denote equality modulo compact operators. By Proposition \ref{prop:commutator1},
\[ FT_{\tilde{f}} = FQ\mathcal{M}_{\tilde{f}} \sim F\mathcal{M}_{\tilde{f}}  Q = F \mathcal{M}_{\tilde{f}}= P_\natural A\gamma_{-\frac{1}{2}}  \mathcal{M}_{\tilde{f}}  \]
 $P_\natural$ and $A$ are pseudodifferential operators, and the principal symbols of $P_\natural$ and $A$ commute with the symbol of $\mathcal{M}_f$.
Therefore  $P_\natural\mathcal{M}_f \sim \mathcal{M}_f P_\natural$ and $A\mathcal{M}_f\sim \mathcal{M}_f A$, and
\[ T^\natural_f F = P_\natural\mathcal{M}_f P_\natural A \gamma_{-\frac{1}{2}} 
%\sim P_\natural\mathcal{M}_f A \gamma_{-\frac{1}{2}} 
\sim  P_\natural A \mathcal{M}_f \gamma_{-\frac{1}{2}}\]
%\[ T^\natural_f F = P_\natural\mathcal{M}_f F \sim \mathcal{M}_f P_\natural F  = \mathcal{M}_f F\]
%Further,
%\[ \mathcal{M}_f F = \mathcal{M}_f P_\natural A\gamma_{-\frac{1}{2}}  \sim P_\natural\mathcal{M}_f A\gamma_{-\frac{1}{2}} \sim  P_\natural A \mathcal{M}_f \gamma_{-\frac{1}{2}} \]
%while
%\[ F \mathcal{M}_{\tilde{f}}  = PA\gamma_{-\frac{1}{2}}  \mathcal{M}_{\tilde{f}}  \]
The proposition now follows from the equality  
\[\gamma_{-\frac{1}{2}}  \mathcal{M}_{\tilde{f}} = M_f \gamma_{-\frac{1}{2}}\]
\end{proof}

\begin{corollary} \label{corc}
With $\tilde{f}$, $f$, as above, if $f(x)\ne 0$ for all $x\in M$, then 
$T_{\tilde{f}}$ and $T_f^\natural$ are Fredholm operators, and 
\[ \mathrm{Index}\, T_{\tilde{f}} = \mathrm{Index}\, T_f^\natural\]
\end{corollary}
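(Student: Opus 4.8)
The plan is to deduce this corollary directly from Proposition \ref{propb} together with the Fredholmness statements already established. First I would note that the hypothesis $f(x)\neq 0$ for all $x\in M$ puts us exactly in the situation where $T_{\tilde f}$ is Fredholm (by the final Proposition of section \ref{sec3}, applied with $r=1$ and the matrix-valued function $\tilde f$, whose boundary restriction $f$ is invertible everywhere on $M$) and where $T_f^\natural$ is Fredholm as well. For the latter: since $P_\natural$ is a pseudodifferential projection of order zero and $\mathcal M_f$ is a pseudodifferential operator of order zero whose principal symbol commutes with that of $P_\natural$, the operator $T_f^\natural = P_\natural \mathcal M_f$ on the range of $P_\natural$ satisfies $T_f^\natural T_{1/f}^\natural \sim I \sim T_{1/f}^\natural T_f^\natural$ modulo compacts, so $T_f^\natural$ is Fredholm by Atkinson's theorem. (One can also simply invoke Proposition \ref{propb} with $\tilde g = 1/\tilde f$ extended suitably, transporting invertibility-mod-compacts across $F$.)

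The core of the argument is then the commuting-square-modulo-compacts from Proposition \ref{propb}: we have a Fredholm operator $F:\Hardy\to W^0_\natural(M,S)$ (Proposition \ref{propa}) fitting into a diagram in which $F T_{\tilde f} - T_f^\natural F$ is compact. I would apply the elementary fact that if $F$ is Fredholm, $S_1, S_2$ are Fredholm, and $F S_1 - S_2 F$ is compact, then $\ind S_1 = \ind S_2$. The quickest way to see this is to pass to the Calkin algebra: the images $[F], [S_1], [S_2]$ are invertible, and $[F][S_1] = [S_2][F]$ gives $[S_2] = [F][S_1][F]^{-1}$, so $[S_1]$ and $[S_2]$ are conjugate invertible elements. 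Lifting back, $S_2$ is a compact perturbation of $F S_1 G$ where $G$ is a parametrix for $F$; since $\ind(F S_1 G) = \ind F + \ind S_1 + \ind G = \ind S_1$ (because $\ind G = -\ind F$), and the index is invariant under compact perturbations, $\ind S_2 = \ind S_1$. Taking $S_1 = T_{\tilde f}$, $S_2 = T_f^\natural$ yields $\ind T_{\tilde f} = \ind T_f^\natural$.

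I do not anticipate a serious obstacle here; this corollary is a purely formal consequence of the preceding two propositions plus standard Fredholm theory. The one point requiring a line of care is making sure every operator in sight is genuinely Fredholm before invoking the index-conjugation lemma --- in particular that $T_f^\natural$ is Fredholm, which as noted follows from the symbol calculus for $P_\natural$ and $\mathcal M_f$ exactly as in the final Proposition of section \ref{sec3}. Once that is in place, the displayed conclusion $\ind T_{\tilde f} = \ind T_f^\natural$ is immediate from Proposition \ref{propb}.
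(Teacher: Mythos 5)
Your proof is correct and takes essentially the same approach as the paper: both establish Fredholmness of $T_f^\natural$ via the pseudodifferential symbol calculus for $P_\natural$ and $\mathcal{M}_f$, invoke Proposition \ref{propa} for Fredholmness of $F$ and Proposition \ref{propb} for compactness of $FT_{\tilde f}-T_f^\natural F$, and then equate indices via additivity. Your Calkin-algebra conjugacy phrasing is just a repackaging of the paper's one-line computation $\mathrm{Index}\,F+\mathrm{Index}\,T_{\tilde f}=\mathrm{Index}\,T_f^\natural+\mathrm{Index}\,F$.
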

\begin{proof}
As in section \ref{sec2}, since the projection $P_\natural$ is a pseudodifferential operator of order zero,
$T^\natural_f$ is Fredholm if $f(x)\ne 0$ for all $x\in M$.
Combining Propositions \ref{propa} and \ref{propb} we see that $FT_{\tilde{f}}$ and $T_f^\natural F$ are Fredholm operators with the same index, and therefore
\[ \mathrm{Index}\, F + \mathrm{Index}\, T_{\tilde{f}} = \mathrm{Index}\, T_f^\natural + \mathrm{Index}\, F\]

\end{proof}

The Calderon operator $P_\natural$ and the projection $P_+$ (defined in section \ref{sec2}) are pseudodifferential operators of order zero with the same principal symbol.
This fact, combined with Corollary \ref{corc}, gives the main result of this section.
% We now state the main result of this section.

\begin{proposition}\label{propd}
%Let $\tilde{f}$ be a smooth function on $\Omega$, and $f$ its restriction to $M$.
Let $\theta:M\to M_r(\CC)$ be a continuous matrix-valued function on $M$, such that $\theta(x)$ is invertible for all $x\in M$,
and $\tilde{\theta}:\Omega\to M_r(\CC)$  any continuous extension of $\theta$ to $\Omega$.
Then the Fredholm operators
\[ T_\theta. :L^2_+(M,S)\otimes \CC^r\to L^2_+(M,S)\otimes \CC^r\]
(as in section \ref{sec2}) and
\[ T_{\tilde{\theta}} : \Hardy\otimes \CC^r\to \Hardy\otimes \CC^r\]
(as in section \ref{sec3}) have the same index,
\[ \mathrm{Index}\, T_\theta = \mathrm{Index}\, T_{\tilde{\theta}}\]
\end{proposition}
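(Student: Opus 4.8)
The plan is to reduce the statement to a comparison of Fredholm indices for two Toeplitz-type operators built from pseudodifferential projections with the same principal symbol, using Corollary \ref{corc} as the bridge. First I would invoke Corollary \ref{corc}: given $\theta:M\to M_r(\CC)$ invertible and the extension $\tilde\theta:\Omega\to M_r(\CC)$, the matrix version of that corollary (obtained exactly as in Corollary \ref{comcomp}, by tensoring with $\CC^r$ and applying the scalar result entrywise) shows that $T_{\tilde\theta}$ on $\Hardy\otimes\CC^r$ and $T^\natural_\theta=P_\natural\MM_\theta$ on $W^0_\natural(M,S)\otimes\CC^r$ are Fredholm with
\[ \mathrm{Index}\, T_{\tilde\theta} = \mathrm{Index}\, T^\natural_\theta.\]
So it suffices to prove $\mathrm{Index}\, T_\theta = \mathrm{Index}\, T^\natural_\theta$, where $T_\theta = P_+^{\,r}\MM_\theta$ acts on $L^2_+(M,S)\otimes\CC^r$ and $P_+^{\,r}=P_+\otimes I_r$.

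Next I would exploit the stated fact that $P_\natural$ and $P_+$ are both pseudodifferential operators of order zero on $M$ with the \emph{same principal symbol}; hence $P_\natural - P_+$ is a pseudodifferential operator of order $-1$, and therefore compact on $L^2(M,S)$. Tensoring with $\CC^r$, the difference $P_\natural^{\,r}-P_+^{\,r}$ is compact on $L^2(M,S)\otimes\CC^r$. The ranges $W^0_\natural(M,S)\otimes\CC^r$ and $L^2_+(M,S)\otimes\CC^r$ are two closed subspaces of the same Hilbert space, and I want to produce a Fredholm operator intertwining $T_\theta$ and $T^\natural_\theta$ modulo compacts. The natural candidate is $\Phi = P_\natural^{\,r}\big|_{L^2_+\otimes\CC^r} : L^2_+(M,S)\otimes\CC^r \to W^0_\natural(M,S)\otimes\CC^r$ (equivalently $P_\natural^{\,r}P_+^{\,r}$ viewed between the two ranges). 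Since the two projections differ by a compact operator, $\Phi$ is a Fredholm operator: indeed $P_+^{\,r}\big|_{W^0_\natural\otimes\CC^r}$ is an inverse for $\Phi$ modulo compacts, because $P_\natural^{\,r}P_+^{\,r}P_\natural^{\,r} \sim P_\natural^{\,r}$ and $P_+^{\,r}P_\natural^{\,r}P_+^{\,r}\sim P_+^{\,r}$ on the respective subspaces.

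Then I would check the intertwining relation $\Phi\, T_\theta \sim T^\natural_\theta\, \Phi$ modulo compacts. Writing everything in the ambient Hilbert space: $\Phi T_\theta = P_\natural^{\,r}P_+^{\,r}\MM_\theta P_+^{\,r}$ and $T^\natural_\theta\Phi = P_\natural^{\,r}\MM_\theta P_\natural^{\,r}P_+^{\,r}$. Using that $P_\natural^{\,r}-P_+^{\,r}$ is compact and that both projections commute with $\MM_\theta$ modulo compacts (the principal symbol of $\MM_\theta$ commutes with the scalar principal symbol of the projections, as already used in Proposition \ref{propb}), both expressions are $\sim P_\natural^{\,r}\MM_\theta P_+^{\,r}$ modulo compacts, so they agree. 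Therefore $\Phi T_\theta$ and $T^\natural_\theta \Phi$ are Fredholm with equal index, and the additivity of the index under composition gives
\[ \mathrm{Index}\, \Phi + \mathrm{Index}\, T_\theta = \mathrm{Index}\, T^\natural_\theta + \mathrm{Index}\, \Phi,\]
hence $\mathrm{Index}\, T_\theta = \mathrm{Index}\, T^\natural_\theta = \mathrm{Index}\, T_{\tilde\theta}$, as claimed.

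The main obstacle I anticipate is the bookkeeping around the subspaces: one must be careful that $\Phi$ really is Fredholm as a map \emph{between the two ranges} (not merely that $P_\natural^{\,r}P_+^{\,r}$ is Fredholm-like on the ambient space), and that the ``modulo compacts'' manipulations are legitimate when restricting and corestricting to $L^2_+\otimes\CC^r$ and $W^0_\natural\otimes\CC^r$. This is exactly the same bookkeeping already handled in Corollary \ref{corc} via Propositions \ref{propa} and \ref{propb}, so in practice one could alternatively route the entire argument through Corollary \ref{corc} twice — once with $P_\natural$ and once noting that the construction there goes through verbatim with $P_+$ in place of $P_\natural$, since the only property used is that the relevant operator is a zeroth-order pseudodifferential projection with scalar principal symbol equal to that of $P_\natural$. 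Either way, the topological input is nil; everything is soft functional analysis plus the symbol calculus already invoked.
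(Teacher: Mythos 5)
Your proof is correct and follows essentially the same route as the paper: pass from $T_{\tilde\theta}$ to $T^\natural_\theta$ via Corollary \ref{corc} (in its $r\times r$ form), then equate $\mathrm{Index}\,T^\natural_\theta$ with $\mathrm{Index}\,T_\theta$ using that $P_\natural - P_+$ is compact because the two projections are order-zero pseudodifferential operators with the same principal symbol. The paper states this last implication without elaboration; your explicit Fredholm intertwiner $\Phi=P_\natural^{\,r}|_{L^2_+(M,S)\otimes\CC^r}$, together with the check that it conjugates $T_\theta$ to $T^\natural_\theta$ modulo compacts, is a clean way to make that step rigorous.
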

\begin{proof}
For simplicity, assume first that $r=1$.
%The Calderon operator $P_\natural$ and the projection $P_+$ are pseudodifferential operators of order zero with the same symbol.
The projections $P_\natural$ and $P_+$ differ by a compact operator, because they have the same principal symbol.
This implies that
\[ \mathrm{Index}\, T_\theta = \mathrm{Index}\, T_\theta^\natural\]
The proposition now follows from Corollary \ref{corc}.
%\[ \mathrm{Index}\, T_{\tilde{\theta}} = \mathrm{Index}\, T_\theta^\natural\]

For $r>1$, the evident generalizations of Proposition \ref{propa}, Proposition \ref{propb}, and Corollary \ref{corc}  are valid, and Proposition \ref{propd} follows.

\end{proof}

\section{Bordism invariance of the index}

In this section we prove bordism invariance of the index of Toeplitz operators,
based on the results of sections \ref{sec3} and \ref{sec4}.

\begin{proposition}
Let $\Omega$ be a compact even dimensional Spin$^c$ manifold with  boundary $M$.
Let $\tilde{E}$ be a smooth $\CC$ vector bundle on $\Omega$, and $\tilde{\alpha}$ an automorphism of $\tilde{E}$.
Denote by  $(E,\alpha)$ the restriction of $(\tilde{E}, \tilde{\alpha})$  to $M$, 
and by  $T_\alpha=P_+^E\MM_\alpha$ the Toeplitz operator determined by $\alpha$,
\[ T_\alpha : L^2_+(M,S\otimes E)\to L^2_+(M,S\otimes E)\]
Then
\[ \mathrm{Index}\, T_\alpha = 0\]
\end{proposition}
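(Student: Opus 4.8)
The plan is to reduce the statement to the case of a trivial bundle on $\Omega$ and then apply Proposition \ref{propd} together with Corollary \ref{corc}. First I would dispose of the case where $\tilde E$ is trivial. Choose a trivialization $\tilde E = \Omega\times\CC^r$, so that $\tilde\alpha$ becomes a continuous map $\tilde\theta:\Omega\to GL(r,\CC)$, hence in particular $\tilde\theta(x)$ is invertible for \emph{every} $x\in\Omega$, not just on the boundary. By Proposition \ref{propd}, $\ind T_\alpha = \ind T_{\tilde\theta}$ where $T_{\tilde\theta}=Q_r\MM_{\tilde\theta}$ acts on $\Hardy\otimes\CC^r$. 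But the final Proposition in section \ref{sec3} shows that $T_{\tilde\theta}T_{\tilde\theta^{-1}}\sim T_{\tilde\theta\tilde\theta^{-1}}=T_{I}=I$ modulo compacts, and likewise $T_{\tilde\theta^{-1}}T_{\tilde\theta}\sim I$, because $\tilde\theta^{-1}$ is a globally defined continuous function on $\Omega$. Hence $T_{\tilde\theta}$ is invertible modulo compacts with a two-sided inverse, so $\ind T_{\tilde\theta}=0$, and therefore $\ind T_\alpha=0$.

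Next I would handle a general smooth $\tilde E$ by a stabilization argument analogous to the proofs of Lemma \ref{lem:comm2} and Corollary \ref{comcomp}. Pick a smooth $\CC$ vector bundle $\tilde F$ on $\Omega$ with $\tilde E\oplus\tilde F$ trivial, and extend $\tilde\alpha$ to the automorphism $\tilde\alpha\oplus I_{\tilde F}$ of $\tilde E\oplus\tilde F$. Restricting to $M=\partial\Omega$ gives $(E\oplus F,\alpha\oplus I_F)$. As in section \ref{sec2}, one can arrange $D^{E\oplus F}=D^E\oplus D^F$, so that $P_+^{E\oplus F}=P_+^E\oplus P_+^F$ and $\MM_{\alpha\oplus I_F}=\MM_\alpha\oplus\MM_{I_F}$; consequently $T_{\alpha\oplus I_F}=T_\alpha\oplus T_{I_F}=T_\alpha\oplus I$. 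Therefore
\[ \ind T_\alpha = \ind T_{\alpha\oplus I_F} = \ind T_{\alpha\oplus I_F}^{\,\text{trivial bundle}} = 0 \]
by the trivial-bundle case already established, since $\tilde E\oplus\tilde F$ is trivial and $\tilde\alpha\oplus I_{\tilde F}$ is an automorphism of it.

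The main obstacle is making sure the bookkeeping in the stabilization step is legitimate: one must check that the hypotheses of Proposition \ref{propd} (and implicitly of Corollary \ref{corc}) genuinely apply to the stabilized data, i.e. that the extended automorphism on the trivial bundle $\tilde E\oplus\tilde F$ is invertible on \emph{all} of $\Omega$ and not merely on $M$. This is automatic because $\tilde\alpha$ is an automorphism of $\tilde E$ over all of $\Omega$ and $I_{\tilde F}$ is invertible everywhere, so the extension $\tilde\theta:\Omega\to GL(r,\CC)$ lands in the invertible matrices pointwise on $\Omega$; this is precisely what lets the argument of the final Proposition in section \ref{sec3} produce the two-sided parametrix $T_{\tilde\theta^{-1}}$. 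A secondary point to verify is that choosing $D^{E\oplus F}=D^E\oplus D^F$ is compatible with the identification used in Proposition \ref{propd} between the boundary Toeplitz operator and the solid-region Toeplitz operator; but since that compatibility was already built into section \ref{sec2} and section \ref{sec3}, it requires only a remark rather than new work.
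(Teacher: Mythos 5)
Your reduction to the trivial bundle case and the stabilization step (choosing $\tilde F$ with $\tilde E\oplus\tilde F$ trivial, using $T_{\alpha\oplus I_F}=T_\alpha\oplus I$) match the paper's approach, and so does the appeal to Proposition \ref{propd}. But the core of your argument for the trivial case is incorrect. You observe that $\tilde\theta^{-1}$ is globally defined, so $T_{\tilde\theta^{-1}}$ is a two-sided inverse for $T_{\tilde\theta}$ modulo compact operators, and conclude that $\operatorname{Index} T_{\tilde\theta}=0$. That last inference is false: having a two-sided parametrix modulo compacts is exactly the statement that $T_{\tilde\theta}$ is Fredholm, and says nothing about the value of the index. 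Every Fredholm operator has a two-sided inverse modulo compacts. All the relation $T_{\tilde\theta}T_{\tilde\theta^{-1}}\sim I\sim T_{\tilde\theta^{-1}}T_{\tilde\theta}$ gives you is $\operatorname{Index} T_{\tilde\theta}=-\operatorname{Index} T_{\tilde\theta^{-1}}$, which is not zero on its face. (Compare: on the circle, $z$ is invertible, $T_z T_{z^{-1}}\sim I\sim T_{z^{-1}}T_z$, yet $\operatorname{Index} T_z=-1$.)

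The information that $\tilde\theta$ is invertible on \emph{all} of $\Omega$ (not just $M$) has to be used in a different way: it makes the \emph{full} multiplication operator $\mathcal{M}^+_{\tilde\theta}$ on $L^2(\Omega,S^+)\otimes\CC^r$ a genuinely invertible operator, hence of index zero. The paper then writes $\mathcal{M}^+_{\tilde\theta}$ as a $2\times2$ matrix with respect to the decomposition $\mathcal{N}(\bar D_\Omega)\otimes\CC^r\oplus\mathcal{N}(\bar D_\Omega)^\perp\otimes\CC^r$; the off-diagonal entries are compact by Corollary \ref{comcomp}, so $\operatorname{Index} T_{\tilde\theta}+\operatorname{Index} S_{\tilde\theta}=\operatorname{Index}\mathcal{M}^+_{\tilde\theta}=0$, where $S_{\tilde\theta}$ is the complementary compression. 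The nontrivial remaining step, which your argument does not address, is to show $\operatorname{Index} S_{\tilde\theta}=0$; the paper does this by conjugating $S_{\tilde\theta}$ with the partial isometry $V$ from the polar decomposition of $\bar D_\Omega$ and identifying the result, modulo compacts, with the invertible operator $\mathcal{M}^-_{\tilde\theta}$. Without some version of this second half, the vanishing of the index does not follow.
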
 
\begin{proof}
Choose $\tilde{F}$ such that $\tilde{E}\oplus \tilde{F}\cong \Omega\times \CC^r$ is trivial.
Then the Toeplitz operator $T_{\alpha \oplus I_F}$ determined by the automorphism $\alpha\oplus I_F$ 
of $E\oplus F$ has the same index as $T_\alpha$.
Thus, it suffices to prove the proposition for trivial vector bundles $E$. 
Hence we assume that $\tilde{\alpha}$ and $\alpha$ are matrix valued functions,
\[ \tilde{\alpha}:\Omega\to GL(r,\CC) \qquad \alpha: M\to GL(r,\CC)\]
Note that $\tilde{\alpha}(x)$ is invertible for all $x\in \Omega$.
By Proposition \ref{propd}, it suffices to show that 
\[\mathrm{Index}\, T_{\tilde{\alpha}}=0\]
where $T_{\tilde{\alpha}}$ 
is the Toeplitz operator determined by $\tilde{\alpha}$,
\[ T_{\tilde{\alpha}} : \Hardy\otimes \CC^r\to \Hardy\otimes \CC^r\]
There is the direct sum decomposition
%\[   L^2(\Omega, S^+) = \mathcal{N}(\bar{D}_\Omega) \oplus \mathcal{N}(\bar{D}_\Omega)^\perp\]
%and also
\[   L^2(\Omega, S^+) \otimes \CC^r= \mathcal{N}(\bar{D}_\Omega)\otimes \CC^r \oplus \mathcal{N}(\bar{D}_\Omega)^\perp\otimes \CC^r\]
With respect to this decomposition,
the multiplication operator $\mathcal{M}_{\tilde{\alpha}}=\mathcal{M}^+_{\tilde{\alpha}}$ is a $2\times 2$ matrix
\[ \mathcal{M}^+_{\tilde{\alpha}}
=\left(\begin{array}{cc}T_{\tilde{\alpha}}&A\\B&S_{\tilde{\alpha}}\end{array}\right)
\]
where $A$ and $B$ are compact operators by Proposition \ref{comcomp}.
Since $\mathcal{M}_{\tilde{\alpha}}$ is invertible, $T_{\tilde{\alpha}}$ and $S_{\tilde{\alpha}}$ are Fredholm operators, and
\[ \mathrm{Index}\, T_{\tilde{\alpha}} + \mathrm{Index}\, S_{\tilde{\alpha}} = \mathrm{Index}\, \mathcal{M}^+_{\tilde{\alpha}} = 0\]
Thus it will suffice to show that 
\[ \mathrm{Index}\, S_{\tilde{\alpha}} = 0\]
As in section \ref{sec3} above, let $V$ be the partial isometry determined by $\bar{D}_\Omega = V |\bar{D}_\Omega|$,
where $V$ has the same kernel as $\bar{D}_\Omega$.
Denote $V_r=V\otimes I_r$.
Consider the diagram
\[ \xymatrix{   \mathcal{N}(\bar{D}_\Omega)^\perp \otimes \CC^r \ar[r]^{S_{\tilde{\alpha}}}\ar[d]_{V_r} &   \mathcal{N}(\bar{D}_\Omega)^\perp\otimes \CC^r \ar[d]^{V_r}& \\
 L^2(\Omega, S^-)\otimes \CC^r\ar[r]^{\mathcal{M}^-_{\tilde{\alpha}}}& L^2(\Omega, S^-)\otimes \CC^r}
\]
In this diagram $V_r$ is a Fredholm operator.
The diagram commutes modulo compact operators, because
\[ V_rS_{\tilde{\alpha}} = V_r(I-Q_r)\mathcal{M}^+_{\tilde{\alpha}} \sim V_r\mathcal{M}^+_{\tilde{\alpha}}(I-Q_r)=V_r\mathcal{M}^+_{\tilde{\alpha}}\]
where $\sim$ denotes equality modulo compact operators.
Finally
\[ V_r\mathcal{M}^+_{\tilde{\alpha}} \sim \mathcal{M}^-_{\tilde{\alpha}} V_r\] 
follows from 
\[ T\mathcal{M}^+_{\tilde{\alpha}} \sim \mathcal{M}^-_{\tilde{\alpha}} T\]
where $T=\bar{D}_\Omega(I+\bar{D}_\Omega^*\bar{D}_\Omega)^{-1/2}$
(as in the proof of Proposition \ref{prop:commutator1}),
and the fact that $T-V$ is compact.

Thus, the Fredholm operators $V_rS_{\tilde{\alpha}}$ and $\mathcal{M}^-_{\tilde{\alpha}}V_r$ have the same index, and additivity of the index implies
%\[ \mathrm{Index}\, V_r + \mathrm{Index}\, S_{\tilde{\alpha}} = \mathrm{Index}\,\mathcal{M}^-_{\tilde{\alpha}}+ \mathrm{Index}\, V_r\]
\[ \mathrm{Index}\, S_{\tilde{\alpha}} = \mathrm{Index}\,\mathcal{M}^-_{\tilde{\alpha}}\]
Since  $\mathcal{M}^-_{\tilde{\alpha}}$ is an invertible operator, it has index zero.

\end{proof}

%In this section we shall prove bordism invariance of the index of Toeplitz operators.
%More precisely, we shall prove:

%\begin{proposition}\label{prop:bordism}
%Let $\Omega$ be a compact even dimensional Spin$^c$ manifold with boundary $\partial \Omega=M$.
%Assume given on $\Omega$ a $\CC$ vector bundle $E$ with automorphism $\tilde{\alpha}$.
%The restriction of $\tilde{\alpha}$ to $M$ is an automorphism, denoted $\alpha$, of $E|M$. 
%Then $$\ind T_\alpha = 0$$
%\end{proposition}

\section{The product lemma}

Let  $M_1$ and $M_2$ be two closed even dimensional spin$^c$ manifolds with Dirac operators $D_1, D_2$.
%and let $E_1, E_2$ be smooth $\CC$ vector bundles on $M_1, M_2$.
%Let $D_j\otimes E_j$ denote $D_j$ twisted by $E_j$.
%The positive spinor bundle of $M_1\times M_2$ is 
%\[ S_{M_1\times M_2} = (S_1^+\otimes S_2^+)\oplus (S_1^-\oplus S_2^-)\]
%where $S_j=S_j^+\oplus S_j^-$ is the spinor bundle of $M_j$.
The Dirac operator of $M_1\times M_2$
%mapping the positive spinors of $M_1\times M_2$ to the negative spinors of $M_1\times M_2$,
is the sharp product,
\[ D_{M_1\times M_2} = D_1 \# D_2  =\left(\begin{matrix}D_1\otimes I & -I\otimes D_2^*\\ I\otimes D_2& D_1^*\otimes I\end{matrix}\right)\]
where the $2\times 2$ matrix is an operator from the positive spinors 
\[ S^+_{M_1\times M_2} = (S_1^+\boxtimes S_2^+)\oplus (S_1^-\boxtimes S_2^-)\]
to the negative spinors
\[ S^-_{M_1\times M_2} = (S_1^+\boxtimes S_2^-)\oplus (S_1^+\boxtimes S_2^-)\]
%In this formula $D, D_1, D_2$ are  viewed as operators mapping positive spinors to negative spinors,
%while $D_1^*, D_2^*$ map negative spinors to positive spinors.
It is straightforward to derive from this formula that
\[ \mathrm{Index}\, D =  \mathrm{Index}\, D_1\,\cdot \, \mathrm{Index}\, D_2\]
In this section we prove the analogue of this product formula 
in the case when $M_1$ is odd dimensional and $M_2$ is even dimensional.

\begin{proposition}\label{Prod}
Let $M$ be a closed odd dimensional spin$^c$ manifold,
and let $E$, $\alpha$, $\mathcal{M}_\alpha$, $T_\alpha$ be as above.
Let $W$ be a closed even dimensional spin$^c$ manifold, 
and $F$ a smooth $\CC$ vector bundle on $W$.
On the  odd dimensional spin$^c$ manifold $M\times W$,
let $T_{\alpha\otimes I_F}$ be the Toeplitz operator associated to the automorphism $\alpha\otimes I_F$ of the vector bundle $E\boxtimes F$.
Then
\[  \mathrm{Index}\, (T_{\alpha\otimes I_F}) = \mathrm{Index}\, T_\alpha  \,\cdot \, \mathrm{Index}\, (D_{W}\otimes F)\]
where $D_W\otimes F$ is the Dirac operator of $W$ twisted by $F$.
\end{proposition}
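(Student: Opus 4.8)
The plan is to carry out a direct spectral computation using only the machinery of Sections~\ref{sec2}--\ref{sec4}. Write $\HH_M=L^2(M,S\otimes E)$ and $\HH_W=L^2(W,S_W\otimes F)$, so that $L^2(M\times W,S_{M\times W}\otimes(E\boxtimes F))=\HH_M\,\hat\otimes\,\HH_W$, and let $\omega$ be the $\ZZ/2$-grading involution of the spinor bundle $S_W$ of the even dimensional factor, so $\omega=\pm 1$ on $S_W^\pm$ and $D_W$ anticommutes with $\omega$. By the same Clifford-algebra bookkeeping that produces the sharp product formula above, the twisted Dirac operator of $M\times W$ is $\mathcal D=D_M^E\otimes\omega+I\otimes D_W^F$, with closure $\bar{\mathcal D}=\bar D_M^E\otimes\omega+I\otimes\bar D_W^F$; the two summands are self-adjoint and anticommute, so $\bar{\mathcal D}^2=(\bar D_M^E)^2\otimes I+I\otimes(\bar D_W^F)^2$. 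Here $T_{\alpha\otimes I_F}=\mathcal P_+\,(\MM_\alpha\otimes I)$ with $\mathcal P_+=\chi_{[0,\infty)}(\bar{\mathcal D})$. The first step is to split the even factor as $\HH_W=\HH_W^{0}\oplus\HH_W'$ with $\HH_W^{0}=\ker\bar D_W^F$: since $\omega$ anticommutes with $\bar D_W^F$, both summands are invariant under $\bar D_W^F$ and under $\omega$, hence $\bar{\mathcal D}$, $\mathcal P_+$ and $\MM_\alpha\otimes I$ all respect $(\HH_M\otimes\HH_W^{0})\oplus(\HH_M\otimes\HH_W')$, so $T_{\alpha\otimes I_F}=T^{0}\oplus T'$ and it remains to show $\ind T'=0$ and $\ind T^{0}=\ind T_\alpha\cdot\ind(D_W^F)$.

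For the first claim: on $\HH_M\otimes\HH_W'$ the operator $\bar D_W^F$ is invertible with $(\bar D_W^F)^2\ge\mu_0^2$, where $\mu_0>0$ is the least nonzero eigenvalue of $\bar D_W^F$ in absolute value, so by the anticommutation $\bar{\mathcal D}^2\ge\mu_0^2$ there and $\mathcal P_+$ restricts to $\chi_{(0,\infty)}$ of an invertible operator with a spectral gap $\ge\mu_0$ at the origin. I would then homotope through $\mathcal D_s:=s\,(\bar D_M^E\otimes\omega)+I\otimes\bar D_W^F$ on $\HH_M\otimes\HH_W'$, $s\in[0,1]$: each $\mathcal D_s$ is invertible with the same gap $\ge\mu_0$, so $s\mapsto\chi_{(0,\infty)}(\mathcal D_s)$ is norm-continuous, and $[\chi_{(0,\infty)}(\mathcal D_s),\MM_\alpha\otimes I]$ is compact for every $s$ --- for $s>0$ because $\chi_{(0,\infty)}(\mathcal D_s)$ is (the restriction of) a pseudodifferential operator of order $0$ on $M\times W$ for a rescaled product metric, so one argues as in Lemma~\ref{lem:comm2}, and for $s=0$ because $\chi_{(0,\infty)}(I\otimes\bar D_W^F)=I\otimes\chi_{(0,\infty)}(\bar D_W^F)$ commutes with $\MM_\alpha\otimes I$ outright. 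Hence $\chi_{(0,\infty)}(\mathcal D_s)(\MM_\alpha\otimes I)\chi_{(0,\infty)}(\mathcal D_s)+(I-\chi_{(0,\infty)}(\mathcal D_s))$ is a norm-continuous path of Fredholm operators whose index is $\ind T'$ at $s=1$ and, at $s=0$, the index of the invertible operator $\MM_\alpha\otimes I$ restricted to $\HH_M\otimes\chi_{(0,\infty)}(\bar D_W^F)\HH_W'$. Thus $\ind T'=0$.

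For the second claim, note $\HH_W^{0}=\ker D_W^{F,+}\oplus\ker D_W^{F,-}$ is finite dimensional with $\omega=\pm 1$ on the two summands; set $p^{\pm}=\dim\ker D_W^{F,\pm}$, so $p^{+}-p^{-}=\ind(D_W^F)$. On $\HH_M\otimes\HH_W^{0}$ one has $\bar{\mathcal D}=\bar D_M^E\otimes\omega$, which acts as $\bar D_M^E$ on the $p^{+}$ copies of $\HH_M$ coming from $\ker D_W^{F,+}$ and as $-\bar D_M^E$ on the $p^{-}$ copies coming from $\ker D_W^{F,-}$; hence $T^{0}=T_\alpha^{\oplus p^{+}}\oplus\widetilde T_\alpha^{\oplus p^{-}}$ with $\widetilde T_\alpha=\chi_{[0,\infty)}(-\bar D_M^E)\MM_\alpha=\chi_{(-\infty,0]}(\bar D_M^E)\MM_\alpha$. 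It remains to check $\ind\widetilde T_\alpha=-\ind T_\alpha$: for any orthogonal projection $P$ with $[P,\MM_\alpha]$ compact, $\MM_\alpha$ is block-diagonal modulo compacts for $P\HH_M\oplus(I-P)\HH_M$, so $\ind(P\MM_\alpha|_{P\HH_M})+\ind((I-P)\MM_\alpha|_{(I-P)\HH_M})=\ind\MM_\alpha=0$; taking $P=P_+^E=\chi_{[0,\infty)}(\bar D_M^E)$ gives $\ind(\chi_{(-\infty,0)}(\bar D_M^E)\MM_\alpha)=-\ind T_\alpha$, and passing from $\chi_{(-\infty,0)}$ to $\chi_{(-\infty,0]}$ only enlarges the range of the projection by the finite-dimensional space $\ker\bar D_M^E$, which leaves the Fredholm index unchanged (a standard finite-rank argument). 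Therefore $\ind T^{0}=p^{+}\ind T_\alpha-p^{-}\ind T_\alpha=\ind T_\alpha\cdot\ind(D_W^F)$, and together with $\ind T'=0$ this gives the Proposition.

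The main obstacle I anticipate is the norm-continuity of $s\mapsto\chi_{(0,\infty)}(\mathcal D_s)$ in the homotopy used for $T'$: this is exactly where the anticommuting structure of $\bar{\mathcal D}$ is indispensable, since it is what guarantees the uniform spectral gap at $0$ along the whole path (equivalently, that the bounded transform of $\mathcal D_s$ has spectrum bounded away from $0$, uniformly in $s$), and it is also why $\ker D_W^F$ must be split off first --- across that subspace the gap collapses as $s\to 0$ and the index genuinely jumps. The remaining ingredients --- the splitting of $\HH_W$, the identity $\ind\widetilde T_\alpha=-\ind T_\alpha$ via complementary projections, and the compact-commutator estimates --- are routine given Sections~\ref{sec2}--\ref{sec4}.
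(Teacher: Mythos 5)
Your overall strategy is the same as the paper's: decompose $\mathcal{H}_W$ into $\ker\bar D_W^F$ and its orthogonal complement, identify the Toeplitz operator over $\ker\bar D_W^F$ with $p^+$ copies of $T_\alpha$ and $p^-$ copies of the complementary compression, and kill the index coming from $(\ker\bar D_W^F)^\perp$. Your treatment of the kernel piece, including the identity $\ind\widetilde T_\alpha=-\ind T_\alpha$ via complementary projections and a finite-rank correction, is correct.

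The gap is in the argument that $\ind T'=0$. You assert that the uniform spectral gap $\mathcal D_s^2\ge\mu_0^2$ forces $s\mapsto\chi_{(0,\infty)}(\mathcal D_s)$ to be norm continuous. This is false for a family of \emph{unbounded} self-adjoint operators whose graph domains jump: for $s>0$ the domain of $\mathcal D_s$ is the graph domain of $\bar D_M^E\otimes\omega+I\otimes\bar D_W^F$, which imposes a Sobolev-$1$ condition in the $M$-direction, while the domain of $\mathcal D_0=I\otimes\bar D_W^F$ imposes no regularity at all in the $M$-direction. Concretely, restrict to a $2$-dimensional slab spanned by an eigenvector of $\bar D_M^E$ with eigenvalue $a$ tensored with a pair of $\pm\mu_0$ eigenvectors of $\bar D_W^F$. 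In the basis diagonalizing $\omega=\mathrm{diag}(1,-1)$ the operator $\mathcal D_s$ on that slab is the matrix
\[
\begin{pmatrix} sa & \mu_0 \\ \mu_0 & -sa \end{pmatrix},
\]
whose positive eigenvector is proportional to $\bigl(\mu_0,\ \sqrt{s^2a^2+\mu_0^2}-sa\bigr)$. At $s=0$ this is $(1,1)/\sqrt2$, but for any fixed $s>0$ it tends to $(1,0)$ as $a\to+\infty$ and to $(0,1)$ as $a\to-\infty$. Taking $|a|$ large shows $\|\chi_{(0,\infty)}(\mathcal D_s)-\chi_{(0,\infty)}(\mathcal D_0)\|\ge 1/\sqrt{2}$ for every $s>0$. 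So the path of spectral projections is not norm continuous at $s=0$, the path $F_s$ of ``Fredholm operators plus complementary identity'' is not norm continuous there, and you cannot conclude that $\ind T'$ equals the index of the invertible $s=0$ endpoint. (Strong continuity, which does hold, does not preserve Fredholm index.)

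The paper avoids the homotopy entirely with an algebraic trick: from the polar decomposition $B=V|B|$ of $B=D_W^F$ it builds a unitary $U$ on $\mathcal H_M\otimes(\ker B)^\perp$ which anticommutes with $D$ and commutes with $\mathcal M_{\alpha\otimes I_F}$. Conjugation by $U$ therefore swaps the positive and negative spectral subspaces of $D$, so the two complementary compressions of $\mathcal M_{\alpha\otimes I_F}$ have equal index, and since (by Lemma~\ref{lem:comm2}) they sum modulo compacts to the invertible operator $\mathcal M_{\alpha\otimes I_F}|_{(\ker B)^\perp}$ of index $0$, each is $0$. You should replace your homotopy by this unitary argument (or, equivalently, observe that $U$ gives the symmetry you were trying to extract from the $s\to 0$ limit, without any continuity issues).
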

\begin{proof}
Let $A=D_M\otimes E$ denote the Dirac operator of $M$ twisted by $E$,
 and $B=D_W\otimes F$  the Dirac operator of $W$ twisted by $F$.
 $B=B^+\oplus B^-$ is graded.
The spinor bundle of $M\times W$ is
 \[ S_{M\times W} = S_M\boxtimes S_W = (S_M\boxtimes S_W^+)\oplus (S_M\boxtimes S_W^-)\]
where $S_M, S_W=S_W^+\oplus S_W^-$ are the spinor bundles of $M, W$.
 
The Dirac operator of $M\times W$ twisted by $E\otimes F$ is
 \[ D = \left(\begin{matrix}A\otimes I & I\otimes B^-\\ I\otimes B^+& -A\otimes I\end{matrix}\right)\]
acting on the direct sum
\[ [(S_M\otimes E) \boxtimes (S_W^+\otimes F)]\oplus [(S_M\otimes E) \boxtimes (S_W^-\otimes F)]\]
%Note that the spinor bundle of $M\times W$ is not graded.
Consider the polar decomposition $B=V|B|$, where $V$ is a partial isometry with the same kernel as $B$.
Let $U$ be the operator
 \[ U = \left(\begin{matrix}0& I\otimes -V^-\\ I\otimes V^+& 0\end{matrix}\right)\]
$U$ anticommutes with $D$ and commutes with $\mathcal{M}_{\alpha\otimes I_F}$,
\[UD=-DU\qquad U\mathcal{M}_{\alpha\otimes I_F}=\mathcal{M}_{\alpha\otimes I_F}U\]
 $V$ restricts to a unitary operator on   $(\mathrm{Ker}\, B)^\perp$,
 and thus $U$ restricts to a unitary operator on $L^2(S_M^E) \otimes (\mathrm{Ker}\, B)^\perp$.

%What is the positive space of $D$?
%To answer this question,
View the Hilbert space $L^2((S_M\otimes E)\boxtimes (S_W\otimes F))$ as the direct sum
\[ [L^2(S_M^E) \otimes \mathrm{Ker}\, B^+]\oplus  [L^2(S_M^E) \otimes \mathrm{Ker}\, B^-]\oplus   [L^2(S_M^E) \otimes (\mathrm{Ker}\, B)^\perp]\]
where $S_M^E=S_M\otimes E$.
The multiplication operator $\mathcal{M}_{\alpha\otimes I_F}$ is 
\[ \mathcal{M}_{\alpha\otimes I_F}  =
\left(\begin{matrix}
\mathcal{M}_\alpha \otimes I_{{\mathrm Ker}\, B^+} & 0&0\\ 
0 & \mathcal{M}_\alpha \otimes I_{{\mathrm Ker}\, B^-} & 0\\ 
0& 0& \mathcal{M}_\alpha \otimes I_{({\mathrm Ker}\, B)^\perp}
\end{matrix}\right)\]
The three summands are invariant spaces for $D$, and the positive space of $D$ is 
\[ [L^2_+(S_M^E) \otimes  \mathrm{Ker}\, B^+]\oplus   [L^2_-(S_M^E) \otimes  \mathrm{Ker}\, B^-]\oplus  H\]
where $H$ is a closed linear subspace of $L^2(S_M^E) \otimes (\mathrm{Ker}\, B)^\perp$.
Thus the Toeplitz operator $T_{\alpha\otimes I_F}$ is of the form
\[ T_{\alpha\otimes I_F}  =
\left(\begin{matrix}
T_\alpha \otimes I_{{\mathrm Ker}\, B^+} & 0 & 0\\ 
0 & T^-_\alpha \otimes I_{{\mathrm Ker}\, B^-} & 0\\ 
0 & 0& Q
\end{matrix}\right)\]
where $T^-_\alpha$ is the compression of $\mathcal{M}_\alpha$ to  $L^2_-(S^E_M)$,
and $Q$ is the restriction of $T_{\alpha\otimes I_F}$ to $H$.
Since
\[ \mathrm{Index}\, T^-_\alpha = -  \mathrm{Index}\, T_\alpha\]
it follows that
\begin{align*}
 \mathrm{Index}\, T_{\alpha\otimes I_F} &=  \mathrm{Index}\, T_\alpha \,\cdot\,\mathrm{dim\, Ker}\,B^+ + \mathrm{Index}\, T^-_\alpha\,\cdot\, \mathrm{dim\, Ker}\,B^-+\mathrm{Index}\,Q\\
 & =  \mathrm{Index}\, T_\alpha\,\cdot\, \mathrm{Index}\,B+\mathrm{Index}\,Q
 \end{align*}
 On the third summand $L^2(S_M^E) \otimes (\mathrm{Ker}\, B)^\perp$ we have
 \[ UDU^*=-D\qquad U\mathcal{M}_{\alpha\otimes I_F}U^*=\mathcal{M}_{\alpha\otimes I_F}\]
 Therefore the compression of $\mathcal{M}_{\alpha\otimes I_F}$
 to the positive space (i.e. the operator $Q$) has the same index as the compression to the negative space.
 Hence both are zero, i.e. $\mathrm{Index}\,Q=0$

%If remains to show that the positive space of $D$ is as stated above.
%The twisted Dirac operator $D$ acts on $L^2(S_M^E)\otimes \mathrm{Ker}\, B^+$ as $A\otimes I$.
%The positive subspace is $L^2_+(S_M^E)\otimes \mathrm{Ker}\, B^+$.
%$D$ acts on $L^2(S_M^E)\otimes \mathrm{Ker}\, B^-$ as $-A\otimes I$,
%and now the positive subspace is $L^2_-(S_M^E)\otimes \mathrm{Ker}\, B^-$.

%Finally, the operator $B$ is invertible on $(\mathrm{Ker}\, B)^\perp$, with  $B^-B^+>0$ and $B^+B^->0$.
%Let $(v_j)$ be an orthonormal basis of $L^2(S_M^E)$ with $Av_j=av_j$, $a\in \RR$,
%and let $(w_k)$ be an orthonormal basis of $(\mathrm{Ker}\, B^+)^\perp$ (positive spinors)
%with $B^-B^+w_k=b_k^2w_k$, $b_k>0$.
%Then $(b_k^{-1}B^+w_k)$ is an  orthonormal basis of $(\mathrm{Ker}\, B^-)^\perp$ (negative spinors).
%On the 2 dimensional subspace of $L^2(S_M^E)\otimes (\mathrm{Ker}\, B)^\perp$
%spanned by $v_j\otimes w_k$ and $v_j\otimes B^+w_k$, $D$ is the $2\times 2$ matrix
%\[ \left(\begin{matrix}a & b\\ b& -a\end{matrix}\right)\]
%which has eigenvalues $\pm\sqrt{a^2+b^2}$.

\end{proof}

\section{Vector bundle modification}

With $M, E, \alpha$ as above, let $F\to M$ be a spin$^c$ vector bundle on $M$ with even fiber dimension $n=2r$.
Part of the Spin$^c$ datum of $F$ is a principal $\Spinc$ bundle $P$ on $M$,
\[F=P\times_\Spinc \RR^n\]
where $\Spinc$ acts on $\RR^n$ via the map $\Spinc\to \SO$.
The Bott generator vector bundle $\beta$ on $S^n\subset \RR^n\times \RR$ is $\Spinc$ equivariant,
where $\Spinc$ acts on the first factor $\RR^n$.
Therefore, associated to $P$ we have a fiber bundle $\pi\,\colon \Sigma F \to M$ whose fibers are oriented spheres of dimension $n$,
\[ \Sigma F = P\times_\Spinc S^n\]
and  a vector bundle on $\Sigma F$
\[ \beta_F = P\times_\Spinc \beta\]
Since $M$ is a an odd dimensional Spin$^c$ manifold,  the total space of $F$ is an odd dimensional Spin$^c$ manifold, because $TF = \pi^*F\oplus \pi^*TM$, 
and the direct sum of two Spin$^c$ vector bundles is Spin$^c$.
Every trivial bundle is Spin$^c$, so the total space of $F\oplus \underline{\RR}$ is Spin$^c$.
$\Sigma F$ is a Spin$^c$ manifold as the boundary of the unit ball bundle of $F\oplus \underline{\RR}$.

On the odd dimensional Spin$^c$ manifold $\Sigma F$ we have a Toeplitz operator $T_{\tilde \alpha}$, 
where $\tilde{\alpha}$ is the automorphism $\pi^*\alpha\otimes I$ of the vector bundle $\pi^*E\otimes \beta_F$.
Here $\pi:\Sigma F\to M$ is the projection.
%In this section we shall prove:

\begin{proposition}\label{VM}
\[  \mathrm{Index}\, T_\alpha = \mathrm{Index}\,T_{\tilde\alpha}\] 
\end{proposition}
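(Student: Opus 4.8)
The plan is to follow the proof of the product lemma (Proposition~\ref{Prod}) as closely as possible, with the product $M\times W$ replaced by the sphere bundle $\pi\colon\Sigma F\to M$; write $n=2r$ for the fibre dimension. When $F=M\times\RR^n$ is trivial this is literally Proposition~\ref{Prod}: then $\Sigma F=M\times S^n$, $\beta_F=\underline{\CC}\boxtimes\beta$, $\tilde\alpha=\alpha\boxtimes I$, and Proposition~\ref{Prod} with $W=S^n$ gives $\mathrm{Index}\,T_{\tilde\alpha}=\mathrm{Index}\,T_\alpha\cdot\mathrm{Index}(D_{S^n}\otimes\beta)=\mathrm{Index}\,T_\alpha$, the last factor being $1$ by Bott periodicity. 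The work in the general case is to run the same bookkeeping over a non-trivial sphere bundle.

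First I would introduce the vertical Dirac operator. The fibres of $\pi$ are isometric to the round sphere $S^n\subset\RR^n\times\RR$, and differentiation along the fibres, together with the vertical part of the spinor bundle of $\Sigma F$ and the bundle $\beta_F$, gives a family $D^{\mathrm v}$ of first order operators on $\Sigma F$ which on each fibre is the Dirac operator $D_{S^n}$ twisted by $\beta$. Since $D_{S^n}$ is natural it is $\SO$- and hence $\Spinc$-equivariant, and so is $D_{S^n}\otimes\beta$ because $\beta$ is $\Spinc$-equivariant. The topological input -- a form of Bott periodicity -- is that the $\Spinc$-equivariant index of $D_{S^n}\otimes\beta$ is the trivial one-dimensional representation. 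Writing $\KK^{\pm}$ for the vector bundle $P\times_{\Spinc}\ker(D_{S^n}\otimes\beta)^{\pm}$ on $M$, this says that $[\KK^{+}]-[\KK^{-}]$ equals the class of the trivial line bundle $\underline{\CC}=P\times_{\Spinc}\CC$ in $K^0(M)$; in particular the fibrewise index bundle is trivial.

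Next, exactly as in Proposition~\ref{Prod}, I would split
\[ L^2(\Sigma F,\ S_{\Sigma F}\otimes\pi^*E\otimes\beta_F)=\HH_{+}\oplus\HH_{-}\oplus\HH_{0}, \]
where $\HH_{\pm}$ is the part built from $\ker(D^{\mathrm v})^{\pm}$ and $\HH_{0}$ from the orthogonal complement $(\ker D^{\mathrm v})^{\perp}$. The operator $\MM_{\tilde\alpha}$ is block diagonal for this splitting, since $\tilde\alpha=\pi^*\alpha\otimes I$ is constant along the fibres and hence commutes with the purely vertical operator $D^{\mathrm v}$. Under the identifications $\HH_{\pm}\cong L^2(M,S_M\otimes E\otimes\KK^{\pm})$, the compression of $\MM_{\tilde\alpha}$ to the positive spectrum of $D_{\Sigma F}$ restricts on $\HH_{+}$ to the Toeplitz operator $T_{\alpha\otimes I_{\KK^{+}}}$ on $M$, on $\HH_{-}$ to the compression $T^{-}_{\alpha\otimes I_{\KK^{-}}}$ of $\MM_{\alpha\otimes I_{\KK^{-}}}$ to the negative part (of index $-\mathrm{Index}\,T_{\alpha\otimes I_{\KK^{-}}}$), and on $\HH_{0}$ it has index zero by the argument of Proposition~\ref{Prod}, because the partial isometry in the polar decomposition of $D^{\mathrm v}$ is unitary on $(\ker D^{\mathrm v})^{\perp}$, anticommutes with $D_{\Sigma F}$, and commutes with $\MM_{\tilde\alpha}$. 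Summing the three contributions and using the previous paragraph together with additivity of the index over direct sums of bundles,
\[ \mathrm{Index}\,T_{\tilde\alpha}=\mathrm{Index}\,T_{\alpha\otimes I_{\KK^{+}}}-\mathrm{Index}\,T_{\alpha\otimes I_{\KK^{-}}}=\mathrm{Index}\,T_{\alpha\otimes I_{\underline{\CC}}}=\mathrm{Index}\,T_{\alpha}. \]

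The hard part will be that, unlike in Proposition~\ref{Prod}, the Dirac operator $D_{\Sigma F}$ is \emph{not} the sharp product of a vertical and a horizontal operator: there are curvature correction terms, and no globally defined horizontal Dirac operator. One must therefore justify that $D_{\Sigma F}$ respects the splitting $\HH_{+}\oplus\HH_{-}\oplus\HH_{0}$ modulo compact operators, and that its compression to $\HH_{0}$ really has index zero. I expect to handle this by replacing $D_{\Sigma F}$ with an operator having the same principal symbol -- which produces the same Toeplitz index, since the associated positive spectral projections then differ by a compact operator, exactly as in section~\ref{sec4} and Proposition~\ref{propd} -- chosen so that the vertical and horizontal parts are cleanly separated (an adiabatic-type operator $D^{\mathrm v}+\varepsilon\,D^{\mathrm h}$ with $\varepsilon$ small, so that on $(\ker D^{\mathrm v})^{\perp}$, where $D^{\mathrm v}$ has a uniform spectral gap, the remaining terms are subordinate). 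Checking that this replacement is legitimate, and that $\HH_{0}$ contributes nothing, is where the real work lies; the rest is bookkeeping parallel to Proposition~\ref{Prod}.
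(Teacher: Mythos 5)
Your plan is sound and follows the same architecture as the paper's proof: decompose the Hilbert space along the kernel of a vertical Dirac operator, identify the contribution of the kernel part with $T_\alpha$, and show the orthogonal complement contributes zero index via conjugation by a unitary $U$ that anticommutes with $D$ and commutes with $\MM_{\tilde\alpha}$. Two points, however, differ from the paper and deserve comment.

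First, your topological input is weaker than what the paper establishes and uses. You formulate it as the equality $[\KK^+]-[\KK^-]=[\underline{\CC}]$ in $K^0(M)$, and then pass to the index equality by ``additivity over direct sums.'' That passage is not free: a K-theory class equality says only that $\KK^+\oplus G\cong\underline{\CC}\oplus\KK^-\oplus G$ for some auxiliary bundle $G$, and you then need to argue that $\ind T_{\alpha\otimes I_{(\cdot)}}$ is additive and cancellative in the coefficient bundle before you can conclude. This can be done, but the paper explicitly sets itself the task of avoiding K-theory, and it sidesteps the whole issue by proving (Proposition~\ref{index1}) the \emph{pointwise} statement that $D_\beta$ has one-dimensional kernel, zero cokernel, and that $\Spinc(n+1)$ acts trivially on the kernel. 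Consequently $\KK^-=0$ and $\KK^+=\underline{\CC}$ as honest bundles, $\HH_-$ is absent, and $\HH_+$ identifies canonically with $L^2(M,S_M\otimes E)$ with $T_1=T_\alpha$ on the nose. If you insist on the weaker K-theoretic statement you must supply the missing stability argument; the cleaner route is to prove and use the full strength of Proposition~\ref{index1}.

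Second, you correctly flag that $D_{\Sigma F}$ is not a sharp product of a vertical and a horizontal Dirac operator, and that this is where the real work lies; but your proposed remedy (an adiabatic operator $D^{\mathrm{v}}+\varepsilon D^{\mathrm{h}}$) is left as a sketch, and it is not obvious that the spectral-gap argument closes without further effort. The paper's resolution is more elementary and concrete: it builds a global first-order operator $A$ as a partition-of-unity sum $A=\sum A_j(\varphi_j\circ\pi)$ of local lifts of $D_M\otimes E$, so constructed that $A$ commutes exactly with the vertical operator $B$, and then works with the sharp product $D=A\#B$. This $D$ has the same principal symbol as the genuine twisted Dirac operator of $\Sigma F$, and by the compact-difference-of-projections argument already set up in section~\ref{sec4} (cf.\ Proposition~\ref{propd}) the Toeplitz index is unaffected by the replacement. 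Your instinct that one should replace $D_{\Sigma F}$ by a symbol-equivalent operator is exactly right; the paper just supplies the explicit commuting pair $(A,B)$ rather than an adiabatic limit.
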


The proof of Proposition \ref{VM} is a straightforward generalization of the proof of Proposition \ref{Prod},
and uses the following basic fact about the Dirac operator of an even dimensional sphere.

\begin{proposition}\label{index1}
If $D$ is the Dirac operator of the even dimensional sphere $S^n$
with the Spin$^c$ structure it receives as the boundary of the unit ball in $\RR^{n+1}$,
then $D_\beta$ has one dimensional kernel and zero cokernel, and so
\[ \mathrm{Index} \;D_\beta = 1\]
Moreover, $D_\beta$ is equivariant for the group Spin$^c(n+1)$ which acts by orientation preserving isometries  on $S^n$, and this group acts trivially on the kernel of $D_\beta$.
%More precisely, $D_\beta$ has zero cokernel, and the kernel of $D_\beta$ is spanned by the canonical section of $S^+\otimes \beta=S^+\otimes (S^+)^*$.
\end{proposition}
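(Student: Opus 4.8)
The plan is to treat the two halves of the graded operator separately. Here $D_\beta$ denotes the chiral operator $D_\beta^+\colon C^\infty(S^n,S^+\otimes\beta)\to C^\infty(S^n,S^-\otimes\beta)$, so that $\mathrm{coker}\,D_\beta=\ker D_\beta^-$ and the index assertion is equivalent to $\dim\ker D_\beta^+=1$ together with $\ker D_\beta^-=0$. Equivariance is the easy point: the round metric on $S^n$, the $\mathrm{Spin}^c$ structure it carries as $\partial B^{n+1}$, and the Bott bundle $\beta$ with its natural connection are all preserved by the lift to $\mathrm{Spin}^c(n+1)$ of the isometry group of $S^n$, and the twisted Dirac operator is natural in this data; hence $\ker D_\beta^\pm$ are finite dimensional unitary $\mathrm{Spin}^c(n+1)$-modules. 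To dispose of the cokernel I would invoke the Lichnerowicz--Weitzenb\"ock identity
\[
 D_\beta^2=\nabla^*\nabla+\tfrac14\,\mathrm{scal}+c(F^\beta),
\]
$c(F^\beta)$ being the Clifford action of the curvature of $\beta$. On the unit round sphere $\mathrm{scal}=n(n-1)$ is a positive constant, and a direct computation from the explicit curvature $F^\beta$ (with the conventions fixed above) shows that the zeroth order operator $\tfrac14\mathrm{scal}+c(F^\beta)$ is strictly positive on $S^-\otimes\beta$; hence $\ker D_\beta^-=0$ and $\dim\ker D_\beta^+=\mathrm{Index}\,D_\beta$.

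It remains to compute $\ker D_\beta^+$, which I would do by harmonic analysis on the homogeneous space $S^n=G/H$ with $G=\mathrm{Spin}(n+1)$ and $H=\mathrm{Spin}(n)$. The bundles $S^\pm\otimes\beta$ are the homogeneous bundles $G\times_H W^\pm$ attached to the unitary $H$-modules $W^\pm$ that one reads off from the Clifford (Atiyah--Bott--Shapiro) description of $\beta$, so by Peter--Weyl
\[
 L^2(S^n,S^\pm\otimes\beta)=\bigoplus_{\gamma\in\widehat G}V_\gamma\otimes\mathrm{Hom}_H(V_\gamma|_H,W^\pm).
\]
Since $D_\beta$ is $G$-equivariant it is block diagonal for this decomposition, and $D_\beta^2$ acts on the $\gamma$-block by the scalar $\mathrm{Cas}(\gamma)-c_0$ (Parthasarathy's formula), with $c_0$ depending only on $H$ and the metric. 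Thus $\ker D_\beta$ is the sum of the blocks with $\mathrm{Cas}(\gamma)=c_0$, and the proposition reduces to the purely representation theoretic statement that among those $\gamma$ exactly the trivial representation of $G$ occurs, with multiplicity one in $\mathrm{Hom}_H(\CC,W^+)$ and multiplicity zero in $\mathrm{Hom}_H(\CC,W^-)$. Granting this, $\dim\ker D_\beta^+=1$ and $\ker D_\beta^-=0$; the kernel is the trivial $G$-isotype, so $\mathrm{Spin}(n+1)$ acts trivially on it, and — using that the equivariant structure on $\beta$ is chosen compatibly with that on the spinor bundle, so that the central circle of $\mathrm{Spin}^c(n+1)$ acts with cancelling weights on $S^+$ and on $\beta$ — the full group $\mathrm{Spin}^c(n+1)$ acts trivially.

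The main obstacle is exactly this representation theoretic step: one must unwind the construction of the Bott generator bundle to describe $W^\pm$ as $\mathrm{Spin}(n)$-modules, use Frobenius reciprocity to see that the trivial $\mathrm{Spin}(n+1)$-type lies at the bottom of the spectrum on the $+$ side and nowhere on the $-$ side, and rule out any other $\mathrm{Spin}(n+1)$-type at that bottom eigenvalue. As a check, the case $n=2$ is elementary: there $\beta=\mathcal O(1)$ on $S^2=\mathbb{CP}^1$, the bundle $S^+\otimes\beta$ is canonically trivial, and $\ker D_\beta^+\cong H^0(S^2,\mathcal O)=\CC$ with trivial action while $\ker D_\beta^-\cong H^1(S^2,\mathcal O)=0$. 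One could also hope to reduce general even $n$ to this base case by combining the product formula $\mathrm{Index}\,D_{M_1\times M_2}=\mathrm{Index}\,D_{M_1}\cdot\mathrm{Index}\,D_{M_2}$ recalled at the start of the previous section — applied to $(S^2)^{\times r}$ with $\mathcal O(1)\boxtimes\cdots\boxtimes\mathcal O(1)$, which has index $1$ — with bordism invariance of the Dirac index to pass to $(S^{2r},\beta)$, the cokernel vanishing still being supplied by the Weitzenb\"ock argument.
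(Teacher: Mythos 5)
Your proposal is not a complete proof: at two decisive points you defer to a computation you do not carry out, and you say so yourself (``the main obstacle is exactly this representation theoretic step''). Concretely: (a) the claim that the zeroth order term $\tfrac14\mathrm{scal}+c(F^\beta)$ in the Weitzenb\"ock formula is \emph{strictly positive on the $S^-\otimes\beta$ summand} is a chiral refinement of Lichnerowicz that is not a standard fact; the Clifford action of the curvature of the Bott bundle on $S^-\otimes\beta$ has to be diagonalised explicitly, and nothing in your argument rules out negative eigenvalues on that summand. (b) The Frobenius--reciprocity/Parthasarathy step that should produce exactly one copy of the trivial $\mathrm{Spin}(n+1)$-type on the $+$ side and none on the $-$ side is precisely the content of the proposition, so leaving it as ``the purely representation theoretic statement'' leaves a genuine gap. (c) The fallback via $(S^2)^{\times r}$ and bordism would at best give the \emph{index}; bordism invariance says nothing about $\dim\ker D_\beta^+$, $\dim\ker D_\beta^-$, or the $\mathrm{Spin}^c(n+1)$-action on the kernel, and the required $\mathrm{Spin}^c$-bordism from $\bigl((S^2)^{\times r},\mathcal O(1)^{\boxtimes r}\bigr)$ to $(S^{2r},\beta)$ is asserted but not exhibited.

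The paper's proof sidesteps all of this by a short, concrete identification. It uses that $\beta$ is dual to $S^+$, and that on any even dimensional $\mathrm{Spin}^c$ manifold there is a canonical bundle isomorphism $S\otimes S^*\cong\Lambda_\CC TM$ under which $D_{S^*}$ becomes $d+d^*$. Since $S\otimes\beta\subset S\otimes S^*$, the kernel and cokernel of $D_\beta$ are then read off from the harmonic forms of $S^n$, which are exactly $\CC\oplus\CC\omega$. Representation theory of $\mathrm{Spin}^c(n)$ enters only to locate the trivial line bundle $\CC(1,\omega)\subset\Lambda^0\oplus\Lambda^n$ inside $S^+\otimes\beta$ (and to see $(\Lambda^0\oplus\Lambda^n)\cap(S^-\otimes\beta)=0$), which is a far smaller input than your Peter--Weyl/Parthasarathy machinery. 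You would do well to replace both the Weitzenb\"ock vanishing and the Peter--Weyl computation by this harmonic-form identification; as written, your argument outlines a plausible strategy but does not prove the proposition.
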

\begin{proof}
Recall that if $V$ is a finite dimensional vector space, then there is a canonical nonzero element in $V\otimes V^*$,
which maps to the identity map under the isomorphism $V\otimes V^*\cong \mathrm{Hom}(V,V)$. 

On any even dimensional Spin$^c$ manifold $M$, there is a canonical isomorphism of vector bundles
\[ S\otimes S^* \cong \Lambda_\CC TM\]
where $S$ is the spinor bundle of $M$.
This is implied by the fact that, as representations of $\Spinc$,
\[ \CC^{2^r}\otimes (\CC^{2^r})^*\cong \Lambda_\CC \RR^n\]
Via this isomorphism,  $D_{S^*}$ identifies (up to lower order terms) with $d+d^*$,
where $d$ is the de Rham operator, and $d^*$ its formal adjoint.
Note that the kernel of $D_{S^*}$ is the same as the kernel of $D_{S^*}^2=(d+d^*)^2$,
i.e. it consists of harmonic forms.
On $S^n$ the only harmonic forms are the constant functions, and scalar multiples of the standard volume form.

Because the Bott generator vector bundle $\beta$ is dual to $S^+$,
$S^+\otimes \beta\cong \mathrm{Hom}(S^+,S^+)$ contains a trivial line bundle,
which identifies with the line bundle in $\Lambda^0\oplus \Lambda^n$
spanned by $(1,\omega)$, where $\omega$ is the standard volume form.
This follows from  representation theory. 
Thus, the kernel of $D_\beta$ is the one dimensional vector space spanned by the harmonic forms $c+c\omega$, $c\in \CC$.
The intersection of $\Lambda^0\oplus \Lambda^n$ with $S^-\otimes \beta$ is zero,
and so the cokernel of $D_\beta$ is zero.

\end{proof}

\begin{remark} $D_\beta$ is the (positive) half-signature operator of $S^n$ (with $n$ even).
\end{remark}

\begin{proof}[Proof of Proposition \ref{VM}]
If $F$ is a trivial bundle, then  $\Sigma F=M\times  S^n$
and Proposition \ref{VM} is a special case of   Proposition \ref{Prod}.
%since $\mathrm{Index} D_\beta = 1$.
In general, $\Sigma F$ is a sphere bundle over $M$.
The proof is essentially the same as the proof of Proposition \ref{Prod}, with the following modifications.

$D_\beta$ is equivariant for the action of the structure group of the sphere bundle $\Sigma F$.
Therefore, there is a well-defined ``vertical'' operator $B$ on $\Sigma F$ which in each local trivialization of the sphere bundle $\Sigma F$ is $I\otimes D_\beta$.

Next, we construct a first order differential operator $A$ on $\Sigma F$
which is a ``lift'' of $D_M\otimes E$ from $M$ to $\Sigma F$.
Choose a finite open cover $\{U_j\}$ of $M$,
such that the fiber bundle $\Sigma F$ restricted to each $U_j$ has been trivialized,
\[ \Sigma F|U_j \cong U_j\times S^n\]
%On $U_j\times S^n$ let $A_j:=D_E\times 1$.
Let $A_j$ be the evident lift of $D_M\otimes E$ from $U_j$ to the product $U_j\times S^n$,
\[A_j:C^\infty(\pi^*((S^+\otimes E)|U_j))\to C^\infty(\pi^*((S^-\otimes E)|U_j)) \]
$A_j$ differentiates in the $U_j$ direction.
Let $\{\varphi_j\}$ be a smooth partition of unity on $M$ subordinate to the cover $\{U_j\}$.
Using this partition of unity and the local trivializations $\Sigma F|U_j \cong U_j\times S^n$,
we construct the lift $A$ as
\[ A := \sum A_j(\varphi_j\circ \pi) :C^\infty(\pi^*(S_1^+\otimes E))\to C^\infty(\pi^*(S_1^-\otimes E)) \]
Note that $A$ and $B$ commute.

Now, the sharp product formula for the Dirac operator of $\Sigma F$ twisted by $\pi^*E\otimes \beta_F$ is
 \[ D = \left(\begin{matrix}A & B^-\\ B^+& -A\end{matrix}\right)\]
We can  construct the partial isometry $U$ as in the proof of Proposition \ref{Prod},
with  the properties
\[UD=-DU\qquad U\mathcal{M}_{\alpha\otimes I}=\mathcal{M}_{\alpha\otimes I}U\]
$D$ and $\mathcal{M}_{\alpha\otimes I}$ act on the  Hilbert space $\mathcal{H}=L^2(S_{\Sigma F}\otimes \pi^*E\otimes \beta_F)$.
%and the corresponding decomposition of $T_{\tilde{\alpha}}$, is as follows.
The Hilbert space $L^2(S_{S^n}\otimes \beta)$ on which $D_\beta$ acts is a direct sum $\mathrm{Ker}D_\beta\oplus(\mathrm{Ker}D_\beta)^\perp$.
The summands are invariant  for the structure group $SO(n+1)$ of the sphere bundle $\Sigma F$.
If we view $\mathcal{H}$ as the Hilbert space of $L^2$-sections in a field of Hilbert spaces over $M$,
the orthogonal decomposition of its fibers gives rise to an orthogonal decomposition $\mathcal{H}=\mathcal{H}_1\oplus \mathcal{H}_2$.
%Observe that $\mathcal{H}_1=\mathrm{Ker}\,B^+$.
Due to the commuting of $A$ and $B$,
the subspaces $\mathcal{H}_1$ and $\mathcal{H}_2$ are invariant for $D$ as well as for $\mathcal{M}_{\alpha\otimes I}$,
and $U$ restricts to a unitary on $\mathcal{H}_2$.

%analogous to the direct sum decomposition in the proof of Proposition \ref{Prod}.
The Toeplitz operator $T_{\tilde{\alpha}}=T_1\oplus T_2$ has a corresponding direct sum decomposition,
where $T_j$ acts on a subspace of $\mathcal{H}_j$.
%In the present situation there are  two summands instead of three, because $D_\beta$ has zero cokernel.
As in the proof of Proposition \ref{Prod}, $\mathrm{Index}\,T_2=0$
because  $\mathrm{Index}\,UT_2U^* = -\mathrm{index} \,T_2$.
Due to the fact that  the structure group  acts trivially on $\mathrm{Ker}D_\beta$,
the first summand $\mathcal{H}_1$  identifies canonically with $L^2(S_M\otimes E)$,
and $T_1$ identifies with $T_\alpha$. Thus,
\[ \mathrm{Index} \,T_{\tilde{\alpha}}= \mathrm{Index} \,T_1+ \mathrm{Index} \,T_2= \mathrm{Index} \,T_\alpha\]

\end{proof}

\begin{remark}
The canonical identifications $\mathcal{H}_1=L^2(S_M\otimes E)$ and $T_1=T_\alpha$ in the above proof  are due to the fact that the  kernels of the family of Dirac operators of the fibers of $\Sigma F$, twisted by $\beta_F$, form a trivial line bundle over $M$.

\end{remark}

\section{Bordism and vector bundle modification for the topological index}

Bordism invariance of the topological index follows immediately from Stokes' Theorem.

\begin{proposition}
Let $\Omega$ be a compact even dimensional Spin$^c$ manifold with  boundary $M$.
Let $\tilde{E}$ be a smooth $\CC$ vector bundle on $\Omega$, and $\tilde{\alpha}$ an automorphism of $\tilde{E}$.
Denote by  $(E,\alpha)$ the restriction of $(\tilde{E}, \tilde{\alpha})$  to $M$, 
and by  $T_\alpha=P_+^E\MM_\alpha$ the Toeplitz operator determined by $\alpha$,
\[ T_\alpha : L^2_+(M,S\otimes E)\to L^2_+(M,S\otimes E)\]
Then
\[ (\mathrm{ch}(E, \alpha)\cup \mathrm{Td}(M))[M] = 0\]
\end{proposition}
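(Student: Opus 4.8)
The plan is to show that the closed differential form representing $\mathrm{ch}(E,\alpha)\cup\mathrm{Td}(M)$ on $M=\partial\Omega$ extends to a form on $\Omega$ whose exterior derivative gives the pairing against $[M]$, so that Stokes' theorem forces the integral to vanish. First I would reduce to the case where $\tilde E=\Omega\times\CC^r$ is trivial, exactly as in the analytic bordism proposition: adding $I_{\tilde F}$ on a complementary bundle $\tilde F$ with $\tilde E\oplus\tilde F$ trivial changes neither side (the Chern character of an identity automorphism vanishes, and the Todd class is unchanged), so it suffices to treat $\tilde\alpha\colon\Omega\to GL(r,\CC)$ with restriction $\alpha\colon M\to GL(r,\CC)$.

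Next I would recall that for a trivial bundle the Chern character $\mathrm{ch}(\alpha)$ is, by the formula in Section 2, the de Rham class of the explicit odd form
\[
\omega_{\tilde\alpha} \;=\; \sum_{j\ge 0} -\frac{j!}{(2j+1)!\,(2\pi i)^{j+1}}\,\mathrm{Tr}\bigl((\tilde\alpha^{-1}d\tilde\alpha)^{2j+1}\bigr),
\]
which makes sense on all of $\Omega$ because $\tilde\alpha$ is invertible everywhere on $\Omega$ (not merely on $M$). A standard computation shows each summand $\mathrm{Tr}((\tilde\alpha^{-1}d\tilde\alpha)^{2j+1})$ is a closed form on $\Omega$ (using $d(\tilde\alpha^{-1}d\tilde\alpha)=-(\tilde\alpha^{-1}d\tilde\alpha)^2$ and the trace identity that kills the resulting even power), so $\omega_{\tilde\alpha}$ is a closed form on $\Omega$ restricting to a representative of $\mathrm{ch}(\alpha)$ on $M$. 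Similarly $\mathrm{Td}(M)$ is represented by a closed form built from the curvature of $TM$, and since $TM = (T\Omega|M)$ up to a trivial normal line, $\mathrm{Td}(M)$ is the restriction of $\mathrm{Td}(T\Omega)$, a closed form defined on all of $\Omega$. Hence $\omega_{\tilde\alpha}\wedge\mathrm{Td}(T\Omega)$ is a closed form on $\Omega$ whose restriction to $M$ represents $\mathrm{ch}(E,\alpha)\cup\mathrm{Td}(M)$.

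Then the conclusion is Stokes: if $\eta$ denotes the closed form $\omega_{\tilde\alpha}\wedge\mathrm{Td}(T\Omega)$ on $\Omega$, the component of $\eta|M$ of top degree $\dim M$ is what gets integrated, and
\[
(\mathrm{ch}(E,\alpha)\cup\mathrm{Td}(M))[M] \;=\; \int_M \eta|M \;=\; \int_\Omega d\eta \;=\; 0,
\]
since $d\eta=0$. Here I am using that $\Omega$ is a compact manifold with boundary $M$ carrying the induced orientation, which is legitimate because the Spin$^c$ structures supply orientations compatibly.

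The only real subtlety — the step I expect to require the most care — is the pairing bookkeeping: $\eta$ is a mixed-degree form and one must check that integrating the top-degree part of $\eta|M$ over $M$ really equals $\int_\Omega (d\eta)$ where $d\eta$ is taken of the full mixed form, i.e. that the odd-degree Chern character form and the even-degree Todd form multiply so that the relevant total degree is $\dim\Omega=\dim M+1$ on $\Omega$ and $\dim M$ on $M$. This is routine once one writes out degrees, but it is where an off-by-one error would hide. Everything else — triviality reduction, closedness of the Chern–Simons–type form $\omega_{\tilde\alpha}$, and closedness of the Todd form — is standard Chern–Weil theory.
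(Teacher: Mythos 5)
Your proof is correct and follows the same approach as the paper: extend $\mathrm{ch}(E,\alpha)$ and $\mathrm{Td}(M)$ to $\Omega$ as closed forms and apply Stokes' theorem. The paper is terser (it simply invokes naturality of the Chern character and stability of the Todd class), whereas you fill in the useful detail that $\tilde\alpha$ being invertible on all of $\Omega$ is exactly what lets the Chern--Simons form $\omega_{\tilde\alpha}$ be defined and closed on $\Omega$; the ``bookkeeping'' worry at the end is in fact harmless, since $d$ preserves the grading of a mixed-degree form and Stokes is applied only to the degree-$(\dim M)$ component.
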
 
\begin{proof}
The restriction  of the Spin$^c$ vector bundle $T\Omega$ to $M$
 is the direct sum of the Spin$^c$ vector bundle $TM$ with a trivial line bundle (i.e. the normal bundle).
The Todd class is a stable characteristic class.
Therefore the cohomology class $\mathrm{Td}(M)$ is the restriction of $\mathrm{Td}(\Omega)$ to $M$.
Likewise, by naturality, $\mathrm{ch}(E, \alpha)$ is the restriction to $M$ of
$\mathrm{ch}(\tilde{E}, \tilde{\alpha})$.
The proposition now follows from Stokes' Theorem.

\end{proof}

Invariance of the topological index under vector bundle modification is
\[   (\mathrm{ch}(\pi^*E\otimes \beta_F, \pi^*\alpha\otimes I)\cup \mathrm{Td}(\Sigma F))[\Sigma F]  =(\mathrm{ch}(E,\alpha)\cup \mathrm{Td}(M))[M]\]
Note that
\[\mathrm{ch}(\pi^*E\otimes \beta_F, \pi^*\alpha\otimes I) = \pi^*\mathrm{ch}(E,\alpha)\cup \mathrm{ch}(\beta_F)\]
As spin$^c$ vector bundles on $\Sigma F=S(F\oplus\underline{\RR})$,
\[ \underline{\RR} \oplus T(\Sigma F) \cong \pi^*F\oplus \underline{\RR} \oplus \pi^*(TM)\]
Multiplicativity of the Todd class implies
\[ \mathrm{Td}(\Sigma F) = \pi^*\mathrm{Td}(M)\cup \pi^*\mathrm{Td}(F)\]
Therefore invariance of the topological index under vector bundle modification follows from
\begin{proposition}\label{IF}
\[\pi_!\,\mathrm{ch}(\beta_F) = \frac{1}{\mathrm{Td}(F)}\]
where $\pi_!$ is integration along the fiber of $\pi\,\colon \Sigma F\to M$.
\end{proposition}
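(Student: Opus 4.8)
The plan is to reduce the statement to a computation on a single even-dimensional sphere fiber, and then to identify the pushforward of $\ch(\beta_F)$ with a universal characteristic class of $F$. Since $\Sigma F = P \times_{\Spinc} S^n$ and $\beta_F = P \times_{\Spinc}\beta$, everything in sight is built by the Borel construction from a $\Spinc$-equivariant situation on $S^n \subset \RR^n \times \RR$. I would first observe that integration along the fiber $\pi_!$ is natural with respect to pulling back along maps into the classifying space $B\Spinc$ (or equivalently along classifying maps for $F$), so that it suffices to establish the identity universally — that is, to compute $\pi_!\,\ch(\beta)$ for the universal $\Spinc$ bundle, where $\pi$ is the universal sphere bundle $S^n \hookrightarrow ESpin^c \times_{\Spinc} S^n \to B\Spinc$. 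Concretely, one can split $F$ (after adding a bundle) as a sum of complex line bundles by the splitting principle, reducing to the case where $F = L_1 \oplus \cdots \oplus L_r$ is a sum of complex line bundles with Chern roots $x_1,\dots,x_r$, so that $\Td(F) = \prod_j x_j/(1-e^{-x_j})$ and $\Sigma F$ is fibered over the iterated projectivization.

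The heart of the matter is the fiberwise computation. On a single fiber $S^{2}$ (the building block), $\beta$ restricted to the fiber is the Bott generator, whose Chern character is $1 + u$ where $u$ is the generator of $H^2(S^2)$ with $u[S^2] = 1$; more precisely, for $F = L_1 \oplus \cdots \oplus L_r$ the sphere bundle $\Sigma F = S(L_1 \oplus \cdots \oplus L_r \oplus \underline{\RR})$ and $\beta_F$ is the "total Bott class," whose Chern character restricted to a fiber integrates to $1$ over the fiber. The computation I would carry out is to express $\ch(\beta_F)$ in terms of the Chern roots $x_j$ of $F$ and the tautological class on the sphere bundle, then apply the projection formula together with the standard Gysin sequence for sphere bundles. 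The clean way is to recall (or re-derive) the Atiyah–Bott–Shapiro style formula: if $\beta_F$ is the spinor-type Bott class on $\Sigma F = S(F \oplus \underline{\RR})$, then $\pi_!\,\ch(\beta_F) = \prod_j \frac{1-e^{-x_j}}{x_j} = \Td(F)^{-1}$ follows by a direct residue/generating-function manipulation applied one Chern root at a time — each factor contributes a one-dimensional sphere's worth of integration, producing $\frac{1-e^{-x_j}}{x_j}$.

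I expect the main obstacle to be bookkeeping the $\Spinc$-structure carefully: one must make sure that the Bott class $\beta$ used here — which by Proposition \ref{index1} is dual to $S^+$ on $S^n$ — is normalized so that $D_\beta$ has index $1$, and that under the Borel construction this normalization produces exactly $\Td(F)^{-1}$ and not its reciprocal or a sign-twisted variant. Equivalently, the subtle point is matching the orientation/Spin$^c$ conventions on $\Sigma F = \partial(\text{unit ball of } F \oplus \underline{\RR})$ with the orientation implicit in $\pi_!$. Once the normalization is pinned down (for which the case $M = \mathrm{pt}$, $F = \RR^n$, giving $\pi_!\,\ch(\beta) = 1 = \Td(\RR^n)^{-1}$ over a point, is the sanity check, and the case $F$ a single complex line bundle over $S^2$ gives the first nontrivial check), the general formula follows from the splitting principle and multiplicativity of both sides. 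A clean alternative to the splitting-principle computation is to invoke the fact that $\ch(\beta_F) \cup \pi^*\Td(F)$ is, by the Riemann–Roch/Thom-isomorphism formalism, the Thom class of $F$ pushed to $\Sigma F$, whose fiber integral is $1$ by construction; but since the paper is deliberately avoiding $K$-theory, I would present the explicit Chern-root computation instead.
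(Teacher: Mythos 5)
The paper offers no proof of Proposition~\ref{IF} at all --- it simply cites section~6.2 of \cite{BvE1} --- so there is no ``paper's own proof'' to compare against; I will instead assess your sketch on its own terms. What you outline is the standard Atiyah--Hirzebruch/Atiyah--Bott--Shapiro computation underlying Riemann--Roch for the spin$^c$ Thom isomorphism, and that is indeed the right circle of ideas, almost certainly the same one used in the cited reference. Your sanity checks (trivial $F$ over a point, a single line bundle) are the right ones, and the worry you flag about orientation and normalization of $\beta$ is real and worth pinning down carefully.

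There is, however, one genuine gap that you should not skate past: you treat $F$ throughout as a \emph{complex} vector bundle, with Chern roots $x_j$ and $\Td(F)=\prod x_j/(1-e^{-x_j})$. But in this proposition $F$ is a spin$^c$ real vector bundle of even rank, and the paper's $\Td(F)$ means $e^{c_1(F)/2}\,\hat A(F)$ where $c_1(F):=c_1(L_F)$ for the spin$^c$ determinant line bundle $L_F$. The splitting principle for the oriented structure group does let you pull $F$ back to a sum $L_1\oplus\cdots\oplus L_r$ of complex line bundles \emph{as an oriented bundle}, but the pulled-back spin$^c$ structure need not be the one induced by this complex structure (i.e.\ $L_F$ need not be $L_1\otimes\cdots\otimes L_r$). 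To close this, observe that if the spin$^c$ structure is twisted by a line bundle $N$ (replacing $L_F$ by $L_F\otimes N^{\otimes 2}$), the spinor bundle gets twisted by $N$, hence $\beta$ gets twisted by $N^{-1}$, so $\ch(\beta_F)$ and therefore $\pi_!\ch(\beta_F)$ rescale by $e^{-c_1(N)}$; meanwhile $\Td(F)^{-1}=e^{-c_1(L_F)/2}\hat A(F)^{-1}$ also rescales by $e^{-c_1(N)}$. Thus the desired identity is invariant under changing the spin$^c$ structure, and you may legitimately reduce to the spin$^c$ structure induced by a complex structure, after which your Chern-root calculation (done, e.g., on $\Sigma L=\mathbb{P}(L\oplus\underline{\CC})$, where $\ch(\beta_L)=e^u$ with $u^2=-xu$ and $\pi_!u=1$, giving $\pi_!e^u=(1-e^{-x})/x$) applies. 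You should also be explicit about the ``one Chern root at a time'' step: $\Sigma(L_1\oplus L_2)$ is not $\Sigma L_1\times_M\Sigma L_2$, so the multiplicativity is more naturally phrased via the multiplicative $K$-theory Thom class on $F$ itself (or an induction over a flag bundle), rather than literally factoring the sphere bundle. Finally, ``one-dimensional sphere'' should read ``two-dimensional sphere'' ($\mathbb{CP}^1$).
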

For a proof of Proposition \ref{IF}, see section 6.2 in \cite{BvE1}.

\section{Sphere lemma}

\begin{lemma}\label{sphere}
Let $M, E, \alpha$ be as above.
If $n=\mathrm{dim}\,M$, then for every odd integer $m\ge 2n+1$
there exists a continuous map $\tilde{\alpha}:S^{m}\to \mathrm{GL}(r,\CC)$
such that  
\[ \mathrm{Index}\,T_\alpha = \mathrm{Index}\,T_{\tilde{\alpha}}\]
and 
\[(\ch(E,\alpha)\cup \Td(M))[M]=\ch(\tilde{\alpha})[S^m]\]
\end{lemma}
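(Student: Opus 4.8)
The plan is to produce $\tilde\alpha$ by embedding $M$ into a sphere and pushing the Toeplitz data across the embedding via iterated vector bundle modification, using the tools already assembled. First I would reduce to the case where $E = M\times\CC^r$ is trivial and $\alpha\colon M\to \mathrm{GL}(r,\CC)$: choosing $E'$ with $E\oplus E'$ trivial and replacing $\alpha$ by $\alpha\oplus I_{E'}$ changes neither $\ind T_\alpha$ (as noted repeatedly above, e.g.\ in the bordism proof) nor the characteristic number $(\ch(E,\alpha)\cup\Td(M))[M]$ (by the additivity of the Chern character under direct sum and the fact that $\ch(E',I)=\mathrm{rank}(E')$ contributes nothing in positive odd degree). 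So from now on $(E,\alpha)$ is a matrix-valued function on $M$.

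Next I would embed $M$ into a Euclidean sphere. By Whitney, for $m\ge 2n+1$ there is an embedding $j\colon M\hookrightarrow S^{m}$. Let $F$ be the normal bundle of this embedding; it has even fiber dimension $m-n$ (which is why we want $m-n$ even, forcing $m$ odd since $n=\dim M$ is odd), and $F$ is a Spin$^c$ bundle because $TM\oplus F \cong j^*TS^m$ is stably trivial, hence Spin$^c$, and $TM$ is Spin$^c$, so $F$ inherits a Spin$^c$ structure. The vector bundle modification $\Sigma F$ of $M$ along $F$ is a sphere bundle over $M$ which, by the tubular neighbourhood theorem, sits inside $S^m$ as the boundary of a tubular neighbourhood of $j(M)$; thus $\Sigma F$ embeds in $S^m$ as a smooth hypersurface, and the complement $S^m\setminus \Sigma F$ has two components, one of which, call it $\Omega$, is a compact Spin$^c$ manifold with $\partial\Omega = \Sigma F$. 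Over $\Sigma F$ we have the Toeplitz data $\tilde\alpha_{\Sigma F} := \pi^*\alpha\otimes I$ on $\pi^*E\otimes\beta_F$, and Proposition~\ref{VM} (invariance under vector bundle modification) gives
\[
\ind T_\alpha = \ind T_{\tilde\alpha_{\Sigma F}},
\qquad
(\ch(E,\alpha)\cup\Td(M))[M] = (\ch(\tilde\alpha_{\Sigma F})\cup\Td(\Sigma F))[\Sigma F],
\]
the second equality being the topological vector-bundle-modification identity recorded just before Proposition~\ref{IF}.

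Now I would extend the data from $\Sigma F$ across $\Omega$ and use bordism invariance. The bundle $\pi^*E\otimes\beta_F$ over $\Sigma F$, being a complex vector bundle on a compact manifold, extends to a complex vector bundle $\tilde{\mathcal E}$ over $\Omega$ (stably; and by adding a trivial complement we may assume $\tilde{\mathcal E}$ is trivial, $\tilde{\mathcal E}=\Omega\times\CC^N$), and the automorphism $\tilde\alpha_{\Sigma F}$ is a map $\Sigma F\to\mathrm{GL}(N,\CC)$ that we want to extend to $\Omega\to\mathrm{GL}(N,\CC)$ — but in general it does not extend as an invertible map, precisely because $\ind T_{\tilde\alpha_{\Sigma F}}$ may be nonzero. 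The standard device is to enlarge $S^m$: attach to $\Omega$ along $\Sigma F$ the rest of $S^m$ back, i.e.\ work with the closed sphere $S^m = \Omega \cup_{\Sigma F}\Omega'$, where $\Omega'$ is the other side. Pick a basepoint region and homotope; concretely, I would define $\tilde\alpha\colon S^m\to\mathrm{GL}(N,\CC)$ to be $\tilde\alpha_{\Sigma F}$ (suitably extended) on a collar of $\Sigma F$ inside one side and the constant identity matrix elsewhere — this is possible because $\mathrm{GL}(N,\CC)$ is an open subset of $M_N(\CC)$ and we only need a continuous, not holomorphic, extension, and any complex matrix function can be pushed into $\mathrm{GL}$ on the region where we have freedom by increasing $N$ and stabilizing. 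Then $\Omega'$ (the side on which $\tilde\alpha$ is constant) provides a Spin$^c$ null-bordism of $(\Sigma F,\tilde\alpha_{\Sigma F})$ rel the trivial data, so the bordism-invariance Propositions of §\ref{sec3}–§\ref{sec4} (the analytic "$\ind T_\alpha=0$" for boundaries, applied to the difference) give $\ind T_{\tilde\alpha}\big|_{S^m}$ computed on $S^m$ equals $\ind T_{\tilde\alpha_{\Sigma F}}$, and similarly Stokes/bordism for the topological side gives $\ch(\tilde\alpha)[S^m] = (\ch(\tilde\alpha_{\Sigma F})\cup\Td(\Sigma F))[\Sigma F]$ (the Todd class of $S^m$ being trivial in the relevant way, or absorbed since $\Td$ restricts compatibly across the bordism). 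Chaining the equalities yields both desired identities.

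\textbf{Main obstacle.} The delicate point is the extension/gluing step: turning the non-extendable invertible $\tilde\alpha_{\Sigma F}$ on $\partial\Omega$ into a globally defined $\tilde\alpha\colon S^m\to\mathrm{GL}(r,\CC)$ whose index and Chern number are \emph{exactly} those of the data on $\Sigma F$, rather than merely stably or up to correction terms. Carefully, one extends $\pi^*E\otimes\beta_F$ and $\tilde\alpha$ over the "capping" side as the identity (using that over a contractible piece everything trivializes and $\mathrm{GL}(r,\CC)$ is connected), so that $S^m$ with this $\tilde\alpha$ is Spin$^c$-bordant rel identity-data to $(\Sigma F,\tilde\alpha_{\Sigma F})$; then both the analytic bordism invariance proved in §\ref{sec3}–§\ref{sec4} and the topological bordism invariance (Stokes) apply to give the two equalities simultaneously. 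Keeping track of orientations and of the normalization $r=N$ after all the stabilizations is the part that requires care, but it is bookkeeping, not a new idea.
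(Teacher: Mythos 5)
Your overall strategy — trivialize $E$, embed $M$ in Euclidean space, do a vector bundle modification along the normal bundle, and then push the data across a bordism to $S^m$ — is the same as the paper's, but the proposal has a genuine gap at precisely the step you flag as "bookkeeping."

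The gap is the extension/gluing step. You write that one "extends $\pi^*E\otimes\beta_F$ and $\tilde\alpha$ over the capping side as the identity (using that over a contractible piece everything trivializes and $\mathrm{GL}(r,\CC)$ is connected)." Neither half of this is correct. The tubular neighbourhood of $M$ retracts to $M$, and the complement of the tubular neighbourhood in the sphere/ball is not contractible in general; more importantly, extending $\tilde\alpha_{\Sigma F}$ by the constant identity over either side would force $\tilde\alpha_{\Sigma F}$ to be null-homotopic through invertibles, which is exactly what fails when $\ind T_{\tilde\alpha_{\Sigma F}}\ne 0$. (And extending invertibly over the tubular-neighbourhood side, if it were possible, would make the index zero by bordism invariance — the wrong answer.) The missing idea is the paper's Lemma~\ref{L}: since $B(\nu\oplus\underline\RR)\cup\Omega$ is the \emph{unit ball} of $\RR^{m+1}$, hence contractible, the clutching bundle built from $\alpha_1$ on $\Sigma\nu$ trivializes, and a trivialization factors $\alpha_1$ as $\alpha_3^{-1}\alpha_2$ with $\alpha_3$ extending invertibly over the tubular neighbourhood and $\alpha_2$ extending invertibly over the bordism $\Omega$. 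Then $\ind T_{\alpha_3}=0$ gives $\ind T_{\alpha_1}=\ind T_{\alpha_2}$, and the bordism $(\Omega,\alpha_2)$ transports the index to $S^m$. One cannot extend $\alpha_1$ itself; one factors it. This is a real idea, not bookkeeping, and without it the argument does not close.

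There is also a dimensional slip that obscures this: you embed $M\hookrightarrow S^m$ and claim the modification space $\Sigma F$ is the boundary of a tubular neighbourhood of $M$ in $S^m$. That boundary is the sphere bundle $S(F)$ of dimension $m-1$, whereas the $\Sigma F$ of Proposition~\ref{VM} is $S(F\oplus\underline\RR)$, of dimension $m$. The paper's set-up — embed $M\hookrightarrow\RR^m\hookrightarrow\RR^{m+1}$ so that $\Sigma\nu=S(\nu\oplus\underline\RR)$ is the boundary of a tubular neighbourhood in the $(m+1)$-ball — makes the dimensions match, and simultaneously makes the ambient space a ball, which is exactly what Lemma~\ref{L} needs. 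Once you pass to $\RR^{m+1}$ and invoke the factorization lemma, your chain of equalities (vector bundle modification, then the two bordism steps for analytic and topological index) goes through as planned.
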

\begin{remark}
Note that $\mathrm{Td}(S^m)=1$ if $m$ is odd.
\end{remark}
\begin{proof}
%We shall prove the Lemma by moving, in a finite number of steps, from $M$ to $S^m$.
We shall prove the Lemma by moving, in a finite number of steps, from $(M,E,\alpha)$ to $(S^m,\underline{\CC}^r,\tilde{\alpha})$.
Each step preserves the analytic index as well as the topological index.
We denote   equality of both the analytic and topological index as an equivalence $\sim$ of triples.

By the Whitney Embedding Theorem, $M$ embeds in $\RR^m$.
The 2-out-of-3 principle implies that the normal bundle $\nu$
\[ 0\to TM\to M\times \RR^m\to \nu\to 0\]
is spin$^c$ oriented.
Vector bundle modification by $\nu$ gives
\[ (M,E,\alpha)\sim (\Sigma\nu, \pi^*E\otimes \beta_\nu, \pi^*\alpha\otimes I)\]
By adding on a vector bundle with its identity automorphism, if necessary,
\[ (\Sigma\nu, \pi^*E\otimes \beta_\nu, \pi^*\alpha\otimes I) \sim (\Sigma\nu,\underline{\CC}^r,\alpha_1)\]
%$\Sigma \nu$ bounds, therefore it is bordant to $S^{m}$.
%We shall use a specific bordism from $\Sigma \nu$ to $S^{m}$.
Since $M$ is compact we may assume that 
$M$ is embedded in the interior of the unit ball of $\RR^{m}$. 
Using the inclusion $\RR^{m}\to \RR^{m+1}$, 
a compact tubular neighborhood of $M$ in $\RR^{m+1}$ 
identifies with the ball bundle $B(\nu\oplus \underline{\RR})$, whose boundary is $\Sigma \nu$.
Let $\Omega$ be the unit ball of $\RR^{m+1}$ with the interior of $B(\nu\oplus \underline{\RR})$ removed.
Then $\Omega$ is a bordism of spin$^c$ manifolds from $\Sigma \nu$ to $S^{m}$.

By Lemma \ref{L} below, there exist continuous maps 
$\alpha_2:\Omega\to \mathrm{GL}(r,\CC), \alpha_3:B(\nu\oplus \underline{\RR})\to \mathrm{GL}(r,\CC)$, such that, when restricted to $\Sigma\nu$, $\alpha_3\alpha_1=\alpha_2$.
%The well-known homotopy
%\[ \begin{pmatrix}\alpha_3\alpha_1&0\\0&1\end{pmatrix}\sim_h \begin{pmatrix}\alpha_1&0\\0&\alpha_3\end{pmatrix}\]
%implies that
Then 
\[ \mathrm{Index}\,T^{\Sigma\nu}_{\alpha_2}=\mathrm{Index}\,T^{\Sigma\nu}_{\alpha_3\alpha_1}=\mathrm{Index}\,T^{\Sigma\nu}_{\alpha_3}+\mathrm{Index}\,T^{\Sigma\nu}_{\alpha_1}\]
Since $\Sigma \nu$ is the boundary of $B(\nu\oplus \underline{\RR})$ and $\alpha_3$ is invertible on $B(\nu\oplus \underline{\RR})$,
\[ \mathrm{Index}\,T^{\Sigma\nu}_{\alpha_3}=0\]
Thus
\[ \mathrm{Index}\,T^{\Sigma\nu}_{\alpha_2}=\mathrm{Index}\,T^{\Sigma\nu}_{\alpha_1}\]
The same argument applies to the topological index, and we obtain
\[ (\Sigma\nu,\underline{\CC}^r,\alpha_1)\sim(\Sigma\nu,\underline{\CC}^r,\alpha_2)\]
Finally, the bordism $(\Omega,\alpha_2)$ gives
\[(\Sigma\nu,\underline{\CC}^r,\alpha_2)\sim(S^m,\underline{\CC}^r,\alpha_2)\]

\end{proof}

\begin{lemma}\label{L}
Let the unit ball in $\RR^n$ be the union of two compact subsets $B, \Omega$ with  $\Sigma=B\cap \Omega$.
Given a continuous map $\alpha:\Sigma\to \mathrm{GL}(r,\CC)$, there exist continuous maps $\alpha_1:B\to \mathrm{GL}(r,\CC)$, $\alpha_2:\Omega\to \mathrm{GL}(r,\CC)$ such that, when restricted to $\Sigma$, $\alpha_1\alpha=\alpha_2$.
\end{lemma}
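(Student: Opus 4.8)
The plan is to read the datum $\alpha\colon\Sigma\to\mathrm{GL}(r,\CC)$ as a clutching function and to exploit the contractibility of the unit ball $D^n=B\cup\Omega$. The identity $\alpha_1\alpha=\alpha_2$ on $\Sigma$ says precisely that, after applying the bundle automorphisms $v\mapsto\alpha_1 v$ of $B\times\CC^r$ and $v\mapsto\alpha_2 v$ of $\Omega\times\CC^r$, the transition function $\alpha$ is replaced by the identity; equivalently, the two tautological trivializations of the glued bundle can be made to agree.

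First I would form the topological $\CC$ vector bundle $P\to D^n$ obtained by gluing $B\times\CC^r$ to $\Omega\times\CC^r$ along $\Sigma$, identifying $(\sigma,\alpha(\sigma)v)\in B\times\CC^r$ with $(\sigma,v)\in\Omega\times\CC^r$ for $\sigma\in\Sigma$. Next, since $D^n$ is convex, hence contractible and paracompact, the bundle $P$ is trivial; fix a global trivialization $\tau\colon P\xrightarrow{\ \cong\ }D^n\times\CC^r$. Finally, restrict $\tau$ to each piece. Over $B$ the bundle $P$ carries its tautological trivialization, so $\tau|_B$ is an automorphism $(b,v)\mapsto(b,\alpha_1(b)v)$ of $B\times\CC^r$ for a unique continuous $\alpha_1\colon B\to\mathrm{GL}(r,\CC)$, and similarly $\tau|_\Omega$ is $(w,v)\mapsto(w,\alpha_2(w)v)$ for a continuous $\alpha_2\colon\Omega\to\mathrm{GL}(r,\CC)$. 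Evaluating the equality of these two descriptions of $\tau$ at a point $\sigma\in\Sigma$ and using the gluing relation gives $\alpha_1(\sigma)\alpha(\sigma)=\alpha_2(\sigma)$, which is the assertion.

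The step that needs care is the first one: one must know that the glued object $P$ is genuinely a locally trivial vector bundle near $\Sigma$, which amounts to extending $\alpha$ to a $\mathrm{GL}(r,\CC)$ valued map on a neighborhood of $\Sigma$ inside $B$ (or inside $\Omega$) --- Tietze's theorem only supplies matrix valued extensions, which need not be invertible. In the only situation where the lemma is used, namely the proof of Lemma \ref{sphere}, $\Sigma=\Sigma\nu$ is the common smooth boundary of the two compact manifolds $B$ and $\Omega$, so a bicollar of $\Sigma$ produces the required local extension and $P$ is manifestly a bundle; more generally it is enough that $B,\Omega,\Sigma$ carry a compatible triangulation, or that $\Sigma$ be a neighborhood deformation retract in each of $B$ and $\Omega$, and then the three steps above apply unchanged. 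Alternatively, after enlarging $r$ by adding identity automorphisms --- which does not affect the use of the lemma --- one may bypass bundles altogether and read off the factorization from the Mayer--Vietoris sequence for $K^1$, using $\widetilde K^1(D^n)=0$.
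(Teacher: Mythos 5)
Your proof is essentially identical to the paper's: clutch trivial bundles over $B$ and $\Omega$ along $\Sigma$ using $\alpha$, trivialize the result over the contractible ball, and read $\alpha_1,\alpha_2$ off the restrictions of that trivialization. The extra point you flag---that the glued object must be verified to be locally trivial near $\Sigma$---is a genuine subtlety that the paper's one-line construction passes over in silence, and your observation that a bicollar of $\Sigma\nu$ handles it in the only case where the lemma is invoked is correct. But you do not actually need the bicollar, a triangulation, or a neighborhood-deformation-retract hypothesis: since $\Sigma$ is compact and $\mathrm{GL}(r,\CC)$ is open in $M_r(\CC)$, a Tietze extension of $\alpha$ to a matrix-valued map on the ball is invertible on an open set $V\supset\Sigma$, and then $U_1=D^n\setminus(\Omega\setminus V)$ and $U_2=D^n\setminus(B\setminus V)$ form an open cover with $B\subset U_1$, $\Omega\subset U_2$, $U_1\cap U_2\subset V$, so the clutching data already lives on an honest open cover and local triviality is automatic for any compact $B,\Omega$. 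Your $K^1$ Mayer--Vietoris alternative is also fine, but since the paper explicitly avoids $K$-theory it is in the spirit of the argument to stay with the elementary clutching/Tietze route.
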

\begin{proof}
Let $E$ be the vector bundle on the unit ball in $\RR^n$
obtained by clutching  trivial  bundles on $B$ and $\Omega$ via $\alpha$,
\[ E = \Omega\times \CC^r\sqcup B\times \CC^r/\sim\qquad (p,v)\sim (p,\alpha(p)v)\quad p\in \Sigma\]
Because the unit ball is contractible, $E$ can be trivialized.
A trivialization of $E$ amounts to a choice of continuous maps $\alpha_1:B\to \mathrm{GL}(r,\CC)$, $\alpha_2:\Omega\to \mathrm{GL}(r,\CC)$
such that $\alpha_1(p)\alpha(p)v=\alpha_2(p)v$ for all $p\in \Sigma$, $v\in \CC^r$.

\end{proof}

%\begin{lemma}
%Let the unit ball in $\RR^n$ be the union of two compact subsets $B, \Omega$ with  $S=B\cap \Omega$.
%Given a $\CC$ vector bundle $E$ on $S$ with an automorphism $\alpha_1$ there exists a vector bundle $L$ on $B$ with an automorphism $\alpha_2$ such that:
%\begin{itemize}
%\item $E\oplus L|_S$ extends to a vector bundle $F$ on $\Omega$;
%\item There is an automorphism $\alpha$ of $F$ that, when restricted to $S$, is homotopic to $\alpha_1\oplus\alpha_2$.
%\end{itemize}
%\end{lemma}
%\begin{proof}
%In  the $K$-theory Mayer-Vietoris sequence
%\[ \ldots\to K^1(\Omega)\oplus K^1(B)\to K^1(S)\stackrel{\partial}{\longrightarrow} K^0(B\cup \Omega)=\ZZ\to \ldots \]
%the boundary map $\partial$ is zero, because, in general, the image of $\partial$ consists of elements with fiber dimension zero. Exactness of the sequence implies \ that the lemma is true for $K$-theory classes, i.e., there exist $(L, \alpha_2)\in K^1(B)$ and $(F, \alpha)\in K^1(\Omega)$ and a homotopy of vector bundle automorphisms
%\[ (F|_S\oplus \underline{\CC}^m, \alpha\oplus I_m) \sim_h (E\oplus L|_S\oplus\underline{\CC}^m, \alpha_1\oplus\alpha_2\oplus I_m)\]

%By homotopy extension, we may replace $(F\oplus \underline{\CC}^m,\alpha\oplus I)$ with a homotopic automorphism such that we obtain an isomorphism when restricted to $S$. 
%\end{proof}

\section{Proof of the index theorem}

Bott periodicity is the following statement about the homotopy groups of $\mathrm{GL}(n,\CC)$ \cite{Bo59},
\[ \pi_j\,\mathrm{GL}(n,\CC)\cong\left\{
\begin{array}{ll}  \ZZ & j\;\mbox{odd}\\ 0& j\;\mbox{even} \end{array}
\right.\]
\[j=0,1,2,\dots, 2n-1\]

\begin{proposition}\label{sphere2}
Let $S^m$ be an odd dimensional  sphere.
If $\alpha:S^m\to \mathrm{GL}(r,\CC)$ is a continuous map
then
\[  \ind T_\alpha = \ch(\alpha)[S^m]\]
\end{proposition}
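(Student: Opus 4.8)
The plan is to deduce the statement from Bott periodicity by recognizing that, as a function of the homotopy class of $\alpha$, each side of the equation is a group homomorphism $\pi_m\mathrm{GL}(r,\CC)\to\ZZ$, and then to match the two homomorphisms on a single generator. Write $m=2k-1$. Replacing $\alpha$ by $\alpha\oplus I_{\CC^s}$ changes neither $\ind T_\alpha$ (one has $T_{\alpha\oplus I}=T_\alpha\oplus\mathrm{id}$) nor $\ch(\alpha)[S^m]$ (this is precisely how $\ch$ was defined for non-trivial data in Section 2), so we may assume $m\le 2r-1$, the stable range in which Bott periodicity gives $\pi_m\mathrm{GL}(r,\CC)\cong\ZZ$. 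Set $f([\alpha])=\ind T_\alpha$ and $g([\alpha])=\ch(\alpha)[S^m]$.

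First I would verify that $f$ and $g$ are well defined and additive. For $f$: the assignment $\alpha\mapsto T_\alpha=(P_+\otimes I_r)\mathcal{M}_\alpha$ is norm continuous, since $\|\mathcal{M}_\alpha\|=\sup_x\|\alpha(x)\|$; hence a homotopy through invertible matrix-valued functions gives a norm-continuous path of Fredholm operators, along which the index is constant, so $\ind T_\alpha$ depends only on $[\alpha]\in\pi_m\mathrm{GL}(r,\CC)$. For $g$: the forms $\mathrm{Tr}((\alpha^{-1}d\alpha)^{2j+1})$ are closed, and for a homotopy between smooth representatives the Stokes argument already used for bordism invariance of the topological index shows $\ch(\alpha_0)[S^m]=\ch(\alpha_1)[S^m]$; for continuous $\alpha$ one passes to a smooth approximation, which is well defined by the same homotopy invariance. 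Additivity is equally routine: block sum realizes the group operation in the stable range, $[\alpha\oplus\beta]=[\alpha]+[\beta]$ (conjugating $I\oplus\beta$ to $\beta\oplus I$ by a block permutation matrix, which lies in the connected group $\mathrm{GL}(2r,\CC)$, and using that pointwise product of maps into a topological group represents the sum in $\pi_m$), while $T_{\alpha\oplus\beta}=T_\alpha\oplus T_\beta$ and $\ch(\alpha\oplus\beta)=\ch(\alpha)+\ch(\beta)$ follow at once from the definitions. Thus $f,g\colon\pi_m\mathrm{GL}(r,\CC)\cong\ZZ\to\ZZ$ are homomorphisms, compatible with the stabilization maps $\mathrm{GL}(r,\CC)\hookrightarrow\mathrm{GL}(r+1,\CC)$.

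It now suffices to produce one generator of $\pi_m\mathrm{GL}(r,\CC)$ on which $f$ and $g$ agree. For $m=1$ take the identity map $z\colon S^1\to\mathrm{GL}(1,\CC)$; it generates $\pi_1\mathrm{GL}(1,\CC)$, and $f(z)=\ind T_z=-1$ by Theorem \ref{thm:Noether}, while $g(z)=\ch(z)[S^1]=-\,\mathrm{winding\; number}\,z=-1$ by the formula recorded after Theorem \ref{thm:Noether}. For odd $m\ge 3$ apply the Sphere Lemma (Lemma \ref{sphere}) to the triple $(S^1,\underline{\CC},z)$: since $\dim S^1=1$, it produces a continuous $\tilde\alpha\colon S^m\to\mathrm{GL}(r',\CC)$ with
\[\ind T_{\tilde\alpha}=\ind T_z=-1,\qquad
\ch(\tilde\alpha)[S^m]=(\ch(z)\cup\Td(S^1))[S^1]=\ch(z)[S^1]=-1,\]
using $\Td(S^1)=1$. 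Hence $g(\tilde\alpha)=-1$; a surjective homomorphism $\ZZ\to\ZZ$ is an isomorphism, so $g$ is an isomorphism and $\tilde\alpha$ represents a generator. Since $f(\tilde\alpha)=-1$ as well, $f$ and $g$ agree on a generator, hence $f=g$. After stabilizing the rank this gives $\ind T_\alpha=\ch(\alpha)[S^m]$ for every continuous $\alpha\colon S^m\to\mathrm{GL}(r,\CC)$.

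The main obstacle is the second paragraph — in particular checking that the block sum of matrix-valued functions induces addition in $\pi_m\mathrm{GL}(r,\CC)$ and that $f$ and $g$ are compatible with stabilization, so that they genuinely descend to homomorphisms out of $\pi_m\mathrm{GL}(r,\CC)\cong\ZZ$. Once that bookkeeping is in place, Bott periodicity collapses the problem to a single integer, and the Sphere Lemma transports the one computation we already have, Noether's theorem on the circle, onto a generator, so no new analysis is required.
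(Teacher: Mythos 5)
Your proof is correct and follows essentially the same strategy as the paper: stabilize into the range where Bott periodicity gives $\pi_m\mathrm{GL}(r,\CC)\cong\ZZ$, observe that both sides are homomorphisms on this group, and match them on a single class obtained by transporting Noether's circle computation via Lemma \ref{sphere}. (One small slip: you conclude $\tilde\alpha$ is a generator because ``$g$ is an isomorphism,'' but a priori $g=\ch(\cdot)[S^m]$ takes values in $\QQ$, not $\ZZ$; the argument should instead use $f=\ind T_{(\cdot)}$, which does land in $\ZZ$ and hits $-1$, so it is surjective and $\tilde\alpha$ generates --- after which $f$ and $g$ agree on a generator and hence everywhere. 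Also, the paper gets additivity directly from pointwise multiplication of symbols via $T_{\alpha\beta}\sim T_\alpha T_\beta$ mod compacts, whereas you route through block sums; both work.)
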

\begin{proof}
Assume $r\ge (m+1)/2$, so that $\pi_m\mathrm{GL}(r,\CC)\cong \ZZ$. 
If not, replace $r$ with $r'\ge (m+1)/2$, and $\alpha$ with $\alpha\oplus I_{r'-r}$.

Due to homotopy invariance of the index, the analytic and topological index determine maps
\[ \phi:\pi_m\,\mathrm{GL}(r,\CC)\to \ZZ\qquad \phi([\alpha]) = \mathrm{Index}\,T_\alpha\]
\[ \psi:\pi_m\,\mathrm{GL}(r,\CC)\to \QQ\qquad \psi([\alpha]) = \ch(\alpha)[S^m]\]
Note that if $\alpha_1,\alpha_2:S^m\to \mathrm{GL}(r,\CC)$ represent two elements in $\pi_m \mathrm{GL}(r,\CC)$,
 their product in $\pi_m \mathrm{GL}(r,\CC)$ can be represented by  the map $p\mapsto \alpha_1(p)\alpha_2(p)$.
Since
\[ \mathrm{Index}\,T_{\alpha_1\alpha_2}=\mathrm{Index}\,T_{\alpha_1}+\mathrm{Index}\,T_{\alpha_2}
\qquad \ch(\alpha_1\alpha_2)=\ch(\alpha_1)+\ch(\alpha_2)\]
it follows that $\phi:\ZZ\to \ZZ$ and $\psi:\ZZ\to \QQ$ are  homomorphisms of abelian groups.
%Therefore there is a rational number $q$ such that 
%\[ \mathrm{Index}\,T_\alpha=q\,\ch(\alpha)[S^m]\qquad \text{\rm for all}\; \alpha:S^m\to U(r)\]

By Theorem \ref{thm:Noether}, for $f:S^1\to \CC\setminus \{0\}, f(z)=z^{-1}$,
\[ \mathrm{Index}\,T_f=\ch(f)[S^1]=1\]
Then by Lemma \ref{sphere}
there exists $\alpha:S^m\to \mathrm{GL}(r,\CC)$ with 
\[ \mathrm{Index}\,T_\alpha=\ch(\alpha)[S^m]=1\]
This implies that  $\phi([\alpha]) = \psi([\alpha])$
for all $[\alpha]\in  \pi_m \mathrm{GL}(r,\CC)\cong \ZZ$.

\end{proof}

\begin{remark}
For an alternate approach to this proof, using Bott periodicity combined with a direct calculation, see Venugopalkrishna \cite{V72}.
\end{remark}

\begin{proof}[Proof of Theorem \ref{Thm}]
By Lemma \ref{sphere2}, Theorem \ref{Thm} holds for odd dimensional spheres $M=S^m$.
Then by Lemma \ref{sphere}, Theorem \ref{Thm} holds for all odd dimensional spin$^c$ manifolds $M$.

\end{proof}

\section{Boutet de Monvel's theorem}\label{sec:BdM}

Let $\tilde{\Omega}$ be a complex analytic manifold, and $\Omega^0\subset \tilde{\Omega}$ a relatively compact open submanifold with  smooth boundary $M=\partial \Omega^0$.
Choose a  defining function of the boundary $\rho:\tilde{\Omega}\to \RR$ with  $\Omega^0=\rho^{-1}((-\infty,0))$, $\Omega=\rho^{-1}((-\infty,0])$, $M=\rho^{-1}(0)$, and $d\rho(p)\ne 0$ for all $p\in M$.
The boundary of $\Omega^0$  is called  strictly pseudoconvex if for every $p\in M$ and every holomorphic vector $w$ that is tangent to $M$,
\[w\in T^{1,0}_pM = T^{1,0}_p\tilde{\Omega}\cap (T_pM\otimes \CC)\] 
we have 
\[\partial\bar{\partial}\rho(w,\bar{w})> 0\qquad \text{if}\;w\ne 0\]
%\[  \sum_{j,k=1}^n\frac{\partial^2\rho}{\partial z_j\partial \bar{z}_k}w_j\bar{w}_k\ge 0\]
Strict pseudoconvexity is biholomorphically invariant.
%$\Omega$ is strictly pseudoconvex near $p\in M$ if and only if it is strictly convex in the Euclidean sense for a suitable choice of holomorphic coordinates in a neighborhood of  $p$.
A domain $\Omega^0$ in $\CC^n$ that is strictly convex in the Euclidean sense is strictly pseudoconvex.
%In $\CC^n$, a strictly pseudoconvex domain is the same as a domain of holomorphy.

The Hardy space  $H^2(M)$ is the space of $L^2$-functions on $M$ that extend to a holomorphic function on $\Omega$.
 The Szeg\"o projection $S$ is the orthogonal projection 
 \[ S:L^2(M)\to H^2(M)\]
For a continuous map $\alpha:M\to \mathrm{GL}(r,\CC)$, let $\mathcal{M}_\alpha$ be the corresponding multiplication operator 
on  $L^2(M)\otimes \CC^r$.
The Toeplitz operator $\mathfrak{T}_\alpha$ is the composition of $\mathcal{M}_\alpha$
with $S\otimes I_r$,
\[ \mathfrak{T}_\alpha =(S\otimes I_r)\mathcal{M}_\alpha : H^2(M)\otimes \CC^r\to H^2(M)\otimes \CC^r\]
$\mathfrak{T}_\alpha$ is a bounded Fredholm operator.
Boutet de Monvel's theorem is (Theorem 1 in \cite{Bo79}):
\begin{theorem}\label{BdM}
\[  \ind \mathfrak{T}_\alpha = (\ch(\alpha)\cup \Td(M))[M]\]
\end{theorem}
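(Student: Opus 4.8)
The plan is to deduce Theorem \ref{BdM} from Theorem \ref{Thm} by recognizing the Szeg\"o--Toeplitz operator $\mathfrak{T}_\alpha$, up to an index-preserving comparison, as the Dirac--Toeplitz operator $T_\alpha$ attached to the canonical Spin$^c$ structure that the strictly pseudoconvex CR manifold $M$ carries, and then reading the answer off Theorem \ref{Thm}.

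First I would record the geometric input. Since $M=\partial\Omega^0$ is strictly pseudoconvex, $\Omega$ is a compact complex manifold with boundary; its complex structure makes $T\Omega$ a Spin$^c$ (indeed complex) vector bundle, with spinor bundle $\Lambda^{0,\bullet}T^*\Omega$ and Dirac operator $D_\Omega=\sqrt{2}(\bar\partial+\bar\partial^{*})$ modulo zeroth-order terms. Restriction to the boundary produces the Spin$^c$ structure on $M$ induced by the CR structure: along $M$ one has $T\Omega|_M\cong H\oplus\underline{\CC}$, where $H$ is the contact distribution with its complex structure and $\underline{\CC}$ the complex normal line, so by multiplicativity $\Td(M)=\Td(T\Omega)|_M$, which is exactly the class $\Td(M)$ of Theorem \ref{BdM}. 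Moreover $E=\underline{\CC}^r$ is trivial, so $\ch(E,\alpha)=\ch(\alpha)$. Hence the right-hand side of Theorem \ref{BdM} equals $(\ch(E,\alpha)\cup\Td(M))[M]$ for $M$ with this Spin$^c$ structure, and it remains to prove $\ind\mathfrak{T}_\alpha=\ind T_\alpha$, with $T_\alpha$ the operator of Theorem \ref{Thm}.

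The heart of the matter is the comparison $\ind\mathfrak{T}_\alpha=\ind T_\alpha$, which I expect to be the main obstacle. I would carry it out through the manifold with boundary $\Omega$, paralleling sections \ref{sec3}--\ref{sec4}. The Hardy space is the $L^2$-closure $H^2(M)=\{\gamma(u):u\in C^\infty(\Omega),\ \bar\partial u=0\}^{-}$ of boundary values of functions holomorphic on $\Omega^0$; the point is to exhibit $H^2(M)$ as (a direct summand of) a Calderon-type boundary space $W^\natural_0$ for the Dolbeault--Dirac operator on $\Omega$, and the Szeg\"o projection $S$ as the corresponding compression of the Calderon projection $P_\natural$. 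Granting this, Proposition \ref{propb} and Corollary \ref{corc} give $\ind\mathfrak{T}_\alpha=\ind T^\natural_\alpha=\ind T_{\tilde\alpha}$ for any continuous extension $\tilde\alpha$ of $\alpha$ to $\Omega$, and Proposition \ref{propd} then identifies $\ind T_{\tilde\alpha}$ with $\ind T_\alpha$ for $M$ with its induced Spin$^c$ structure. Finally Theorem \ref{Thm} gives $\ind T_\alpha=(\ch(\alpha)\cup\Td(M))[M]$, which is Theorem \ref{BdM}.

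The hard part is precisely the identification in the previous paragraph. The Szeg\"o projection is a Fourier integral operator of Hermite type, not a classical pseudodifferential operator, so it is \emph{not} literally a compact perturbation of the spectral projection $P_+$ of any Dirac operator, and the maximal extension $\bar D_\Omega$ of the Dolbeault--Dirac operator has kernel strictly larger than the Bergman space. Making the reduction rigorous uses the strict pseudoconvexity of $M$ in an essential way -- vanishing of the higher $\bar\partial$-cohomology of $\Omega^0$, the subelliptic estimates for $\bar\partial_b$, and the Boutet de Monvel--Sj\"ostrand description of the Szeg\"o projection -- to control $W^\natural_0$ and to show that, after compression to the degree-zero summand, $P_\natural$ and $S$ differ by an operator that does not affect the Toeplitz index. (Equivalently, one may invoke Boutet de Monvel's own reduction of $\mathfrak{T}_\alpha$ to an elliptic operator on $M$, or the fact that the Szeg\"o projection represents the same $K$-homology class as the Dirac operator; the novelty here is only that Theorem \ref{Thm}, proved via Bott periodicity, then supplies the index formula without recourse to Atiyah--Singer.)
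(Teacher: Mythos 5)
Your overall strategy is the paper's: reduce Theorem \ref{BdM} to Theorem \ref{Thm} by proving $\ind\mathfrak{T}_\alpha=\ind T_\alpha$, then read off the index formula. Your geometric observations about the Spin$^c$ structure of $M$ and the Todd class are correct, and you rightly flag the two genuine obstacles --- the Szeg\"o projection is not a classical pseudodifferential operator, and $\mathcal{N}(\bar D_\Omega)$ is strictly larger than the Bergman space.

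But the equality $\ind\mathfrak{T}_\alpha=\ind T_\alpha$ is the whole content of Theorem \ref{BdM} beyond what Theorem \ref{Thm} already supplies, and your proposal sketches it rather than proving it. Exhibiting $H^2(M)$ as a direct summand of the Calderon boundary space $W^\natural_0$ with $S$ a compression of $P_\natural$ does not by itself close the gap: $W^\natural_0$ sits inside $L^2(M,S^+|_M)$ with $S^+$ the full even Dolbeault bundle, so one must still control the contribution of the higher-degree summands, and that is precisely where the hard analysis lives. The paper (citing \cite{BDT89}) does this by interposing the $\bar\partial$-Neumann closure $D_N$ of the Dolbeault--Dirac operator between $\bar D_\Omega$ and the Bergman projection: the change-of-domain principle (Prop.\ 3.3 of \cite{BDT89}) shows the Toeplitz index is the same whether one projects onto $\mathcal{N}(\bar D_\Omega)$ or onto $\mathcal{N}(D_N)$, and Kohn's theorem (compact resolvent of the complex Laplacian $\Box$ in positive degree, valid under strict pseudoconvexity) shows $\mathcal{N}(D_N)$ is a finite-dimensional perturbation of the Bergman space; the final passage from the Bergman projection to the Szeg\"o projection is then a second application of the Calderon argument of Proposition \ref{propd}. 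The tools you gesture at instead (Boutet de Monvel--Sj\"ostrand parametrix, subelliptic estimates for $\bar\partial_b$, the $K$-homology identification) are plausible alternatives but are not executed in your proposal and are not the route the paper takes; as written, the crucial middle step remains a gesture rather than an argument, so the proposal does not constitute a proof.
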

\begin{proof}
The complex manifold $\Omega$ is a spin$^c$ manifold,
and so is its boundary $M$.
\footnote{A strictly pseudoconvex boundary $M$ is a contact manifold, with contact $1$-form $(\partial\rho-\bar{\partial}\rho)|M$.
The spin$^c$ structure of $M$ as the boundary of $\Omega$ agrees with its spin$^c$ structure as a contact manifold.}
Using the projection $P_+$ onto the positive space of the Dirac operator of $M$, as  in the introduction of this paper,
we form a Toeplitz operator
\[ T_\alpha = (P_+\otimes I_r)\mathcal{M}_\alpha:L^2_+(M,S)\otimes \CC^r\to L^2_+(M,S)\otimes \CC^r\]
Theorem \ref{BdM} follows from  Theorem \ref{Thm}
if we can show that
\[ \ind \mathfrak{T}_\alpha = \ind T_\alpha\]
This is Proposition 4.6 of \cite{BDT89}.

An outline of the proof given in \cite{BDT89} is as follows. According to Proposition \ref{propd} above,
\[ \ind T_\alpha = \ind T_{\tilde{\alpha}}\]
where $\tilde{\alpha}:\Omega\to M_r(\CC)$ is any continuous function whose restriction to 
the boundary $M=\partial\Omega$ is $\alpha$, and $T_{\tilde{\alpha}}$ is the Toeplitz operator,
\[ T_{\tilde{\alpha}} : \Hardy\otimes \CC^r\to \Hardy\otimes \CC^r\]
Here $\Hardy$ is the null-space of the maximal closed extension of $D_\Omega$, as in section \ref{sec3}.

Following section 3 of \cite{BDT89}, we  now choose a different domain for the Dirac operator of $\Omega$,
using  $\bar{\partial}$-Neumann boundary conditions.
The Dirac operator of $\Omega$ is the assembled Dolbeault complex,
\[\bar{\partial}+\bar{\partial}^*:C^\infty(\Omega,S^+)\to C^\infty(\Omega,S^-)\]
with positive and negative  spinor bundles
\[ S^+= \bigoplus_{k\;\text{even}}\Lambda^kT^{0,1}\Omega\qquad S^-= \bigoplus_{k\;\text{odd}}\Lambda^kT^{0,1}\Omega\] 
Here $C^\infty(\Omega, S^{+/-})$ denotes the space of spinors that are smooth up to and including the boundary. Let 
\[ \mathcal{A}^k := \{u\in C^\infty(\Omega,\Lambda^kT^{0,1}\Omega)\;\mid\; \iota_{ \bar{\partial} \rho} u=0\;\text{on}\;M=\partial\Omega\}\]
where, as usual, $\iota$ denotes contraction.
Let $D_N$ denote the closure of $\bar{\partial}+\bar{\partial}^*$ restricted to $\bigoplus_{k\;\text{even}} \mathcal{A}^k$. Then
\[ D_\Omega\subseteq D_N\subseteq \bar{D}_\Omega\]
%Therefore the null space of $D_\Omega$ is a direct sum of the space of holomorphic functions and a finite dimensional space.
%%%%% 
%% from first version of the paper
%%%%%
\begin{comment}
In section \ref{sec3} the Dirac operator $D_\Omega$ acts on $C^\infty$-sections with compact support,
\[D_\Omega :C^\infty_c(\Omega^0,S^+)\to C^\infty_c(\Omega^0,S^-)\qquad \Omega^0=\Omega\setminus M\]
Elliptic regularity implies that the null space $\mathcal{N}(\bar{D}_\Omega)$ 
%of the maximal closed extension of $D_\Omega$(as defined in section \ref{sec3}) 
consists of  $L^2$-sections in the null space of $D^\dagger_\Omega$,
\[ \mathcal{N}(\bar{D}_\Omega) =\mathcal{N}(D^\dagger_\Omega)\cap L^2(\Omega,S^+)\]
Therefore   the range of the  Calderon projection $P_\natural$  (see section \ref{sec4}) is a direct sum of $H^2(M)$ and a finite dimensional space.
%\[ P_\natural = P_\natural^0\oplus P_\natural^2\oplus \cdots\oplus P_\natural^{2r}\]
%where the projection $P_\natural^{2k}$ acts on sections of the vector bundle $\Lambda^{2k}T^{0,1}\Omega|M$,
%and
%\[ P_\natural^0=S\]
%while $P^{2k}_\natural$ has finite rank for $k>0$.
In summary,
the ranges of the projections $S$ and $P_\natural$ differ by a space of finite rank,
while $P_\natural$ and $P_+$ differ by a compact operator --- and so with the notation of section \ref{sec4},
\[ \ind \mathfrak{T}_\alpha = \ind T^\natural_\alpha=\ind T_\alpha\]
\end{comment}
%%%%%
%%%%%
%%%%%
By the change of domain principle of section 2 of \cite{BDT89},
the index of any Toeplitz operator obtained by using the projection onto the null space $\Hardy\otimes \CC^r$,
is equal to the index of the Toeplitz operator obtained by using projection onto the null space $\mathcal{N}(D_N)\otimes \CC^r$ (Proposition 3.3 in \cite{BDT89}).

Let $\Box=D_N^*D_N$ be the complex Laplacian acting on $(0,k)$-forms with $k$ even,
likewise  $\Box=D_ND_N^*$ acting on $(0,k)$-forms with $k$ odd. 
By a result of J.\ Kohn \citelist{\cite{Ko63} \cite{Ko64}}, if the boundary of $\Omega$ is strictly pseudoconvex, the operator $\Box$ has compact resolvant for $k\ne 0$. 
This then implies that $\mathcal{N}(D_N^*)$ is finite dimensional,
and $\mathcal{N}(D_N)$ is at most a finite dimensional perturbation of 
\[ H^\omega(\Omega) = \{u\in L^2(\Omega)\;\mid\; \bar{\partial}u=0\}\]
The projection $L^2(\Omega)\to H^\omega(\Omega)$ is the Bergmann projection.
The index of any Toeplitz operator obtained by using projection onto the null space $\mathcal{N}(D_N)\otimes \CC^r$ 
is equal to the index of the Toeplitz operator obtained using the Bergmann projection.

Finally, the transition from the Bergmann projection (on $\Omega$) to the Szeg\"o projection $S$ (on $M=\partial\Omega$) is done by the same argument as in the proof of Proposition \ref{propd}.

In summary, the proof is
\[ P_+
\rightsquigarrow \text{Calderon}
\rightsquigarrow \mathcal{N}(\bar{D}_\Omega)  
\rightsquigarrow \mathcal{N}(D_N)
\rightsquigarrow \text{Bergmann}
\rightsquigarrow \text{Szeg\"o}
\]

\end{proof}

\begin{remark}
A crucial step in the proof of Theorem \ref{BdM} is the passage from spinors  to functions (i.e. the step $\mathcal{N}(D_N)
\rightsquigarrow \text{Bergmann}$),
which is done by applying a result of J.\ Kohn.
Compare this to the sheaf theoretic result that, if $\Omega$ is strictly pseudoconvex, then the sheaf cohomology $H^k(\Omega^0,\mathcal{O})$ is a finite dimensional complex vector space for $k>0$ (Proposition 4 in \cite{Gr58}).
Here $\mathcal{O}$ denotes the structure sheaf (germs of holomorphic functions) of $\Omega^0$. 
$H^0(\Omega^0,\mathcal{O})$ is the space of holomorphic functions on $\Omega^0$.
$H^k(\Omega^0,\mathcal{O})$
identifies with the $k$-th homology of the Dolbeault complex.
\end{remark}

\bibliographystyle{abbrv}
\bibliography{MyBibfile}

\end{document}